\newtheorem{theorem}{Theorem}[section]
\newtheorem{lema}[theorem]{Lemma}
\newtheorem{defi}[theorem]{Definition}
\newtheorem{assum}[theorem]{Assumption}
\numberwithin{equation}{section}
\newcommand{\ip}[2]{\langle #1, #2 \rangle}
\newcommand{\aut}{\operatorname{Aut}}
\newcommand{\cay}{\operatorname{Cay}}
\newcommand{\spec}{\operatorname{Spec}}
\def \Zl {{\mathbb F}}
\def \Zl {{\mathbb Z}}
\def \Nl {{\mathbb N}}
\def \Rl {{\mathbb R}}
\def \Zl {{\mathbb Z}}
\def \Ql {{\mathbb Q}}
\def \Cl {{\mathbb C}}
\def \ld {{\lambda}}
\def \eu {{\textbf{e}_u}}
\def \ev {{\textbf{e}_v}}
\def \ex {{\textbf{e}_x}}
\def \ey {{\textbf{e}_y}}
\def \ez {{\textbf{e}_z}}
\def \ew {{\textbf{e}_w}}
\def \Gr {\mathcal{G}_{R}}
\def \Gn {\mathcal{G}_{\Zl_n}}
\def \Zn {\mathbb{Z}_n}
\def \congp {{\simeq_{\textnormal{P}}}}
	\title{State transfer in Grover walks on unitary and quadratic unitary Cayley graphs over finite commutative rings}
	\author{ Koushik Bhakta and Bikash Bhattacharjya\\
		Department of Mathematics\\
		Indian Institute of Technology Guwahati, India\\
		b.koushik@iitg.ac.in, b.bikash@iitg.ac.in }
	\date{}
\begin{document}
		\maketitle
		
		\vspace{-0.3in}
		
		\begin{center}{\textbf{Abstract}}\end{center}
		\noindent This paper focuses on periodicity and perfect state transfer in Grover walks on two well-known families of Cayley graphs, namely, the unitary Cayley graphs and the quadratic unitary Cayley graphs. Let $R$ be a finite commutative ring. The unitary Cayley graph $G_R$ has vertex set $R$, where two vertices $u$ and $v$ are adjacent if $u-v$ is a unit in $R$. We provide a necessary and sufficient condition for the periodicity of the Cayley graph $G_R$. We also completely determine the rings $R$ for which $G_R$ exhibits perfect state transfer. The quadratic unitary Cayley graph $\mathcal{G}_R$ has vertex set $R$, where two vertices $u$ and $v$ are adjacent if $u-v$ or $v-u$ is a square of some units in $R$. It is well known that any finite commutative ring $R$ can be expressed as $R_1\times\cdots\times R_s$, where each $R_i$ is a local ring with maximal ideal $M_i$ for $i\in\{1,\hdots,s\}$. We characterize periodicity and perfect state transfer on $\mathcal{G}_R$ under the condition that $|R_i|/|M_i|\equiv 1 \pmod 4$ for $i\in\{1,\hdots,s\}$. Also, we characterize periodicity and perfect state transfer on $\Gr$, where $R$ can be expressed as $R_0\times\cdots\times R_s$ such that $|R_0|/|M_0|\equiv3\pmod 4$, and $|R_i|/|M_i|\equiv1\pmod4$ for $i\in\{1,\hdots, s\}$, where $R_i$ is a local ring with maximal ideal $M_i$ for $i\in\{0,\hdots,s\}$.
		
		\vspace*{0.3cm}
		\noindent 
		\textbf{Keywords.} Grover walk, unitary Cayley graph, quadratic unitary Cayley graph, finite commutative ring, periodicity, perfect state transfer\\
		\textbf{Mathematics Subject Classifications:} 05C50, 05C25, 81Q99
		
		\section{Introduction}
		Quantum walks~\cite{kendon} on graphs are the quantum counterparts of classical random walks and provide a framework for modeling the movement of quantum particles on graphs. The study of quantum walks is significant due to their numerous applications, including quantum search algorithms~\cite{search}, quantum algorithm design~\cite{ambain}, graph isomorphism testing~\cite{graphiso}, and others. Quantum walks are primarily categorized into two types: continuous-time quantum walks~\cite{state} and discrete-time quantum walks~\cite{chen}. In continuous-time quantum walks, the evolution is typically governed by the adjacency matrix of a graph, allowing specific properties of the walks to directly correspond to properties of the graph (see~\cite{pal}). In contrast, the transition matrix of a discrete-time quantum walk is formed as the product of two non-commuting sparse unitary matrices.
		Furthermore, the selection of these sparse matrices is not unique (see~\cite{zhan2}).	This gives rise to different models of discrete-time quantum walks. This paper focuses on Grover walks, a type of discrete-time quantum walk. We refer the reader to~\cite{circ, spec} for more information about Grover walks on general graphs. This paper focuses on two types of state transfer properties in Grover walks: periodicity~\cite{bipartite} and perfect state transfer~\cite{dqw3}. Periodicity in a discrete-time quantum walk refers to the phenomenon where the state of the quantum walk returns to its initial state after a specific number of time steps. Perfect state transfer refers to the complete and deterministic transfer of a quantum state from one position to another after a specific number of time steps.
		
		The study of state transfer in Grover walks has received much attention in the past decade. Higuchi et al.~\cite{hig1} studied the periodicity of Grover walks on complete graphs, complete bipartite graphs and strongly regular graphs. Kubota et al.~\cite{bethetrees} characterize the periodicity of Grover walks on generalized Bethe trees. Ito et al. in~\cite{completegraph} proved that every complete graph with a self-loop on each vertex is periodic. Yoshie~\cite{distance} studied the periodicity of Grover walks on distance-regular graphs. Kubota et al.~\cite{mixedpaths} studied the periodicity of Grover walks on mixed paths and mixed cycles. Some additional studies on the periodicity of Grover walks can be found in~\cite{kubota, qq, regluar, oddperiodic}. Zhan \cite{dqw1} gave an infinite family of graphs exhibiting perfect state transfer. Kubota and Segawa~\cite{pstdc} studied perfect state transfer in Grover walks between vertex-type states. Kubota and Yoshino characterized the perfect state transfer in Grover walks on circulant graphs with valency up to $4$ in~\cite{circ}.

		This paper aims to study periodicity and perfect state transfer in Grover walks on unitary and quadratic unitary Cayley graphs over finite commutative rings. A commutative ring is called a \emph{local ring} if it has a unique maximal ideal. It is well known~\cite{dummit} that every finite commutative ring can be represented as a direct product of finite local rings, and this decomposition is unique up to the permutation of the local rings. 
		
		\begin{assum}\label{as}
			{\em	Let $R$ be a finite commutative ring with unity $1$ $(\neq 0)$. Let the decomposition of $R$ be $R_1\times\cdots \times R_s,$ where $s$ is a positive integer, and $R_i$ is a local ring with unique maximal ideal $M_i$ for $1\leq i\leq s$. Let $m_i=|M_i|$ for $i\in\{1,\hdots,s\}$ and $m=m_1\cdots m_s$. We also assume that 
				$$\dfrac{|R_1|}{m_1}\geq \cdots \geq\dfrac{|R_s|}{m_s}.$$}
		\end{assum}
		
		We now discuss some classifications of finite rings. We refer to \cite{clring} for more details on the classification of finite rings. For an additive abelian group $\Gamma$, let $\Gamma(0)$ be the ring $(\Gamma, +,\cdot)$, in which $+$ is the addition in the group $\Gamma$, and $a\cdot b=0$ for all $a,b\in \Gamma$.  Let $C_m$ denote the cyclic additive group of order $m$. The ring $\Zl_n$ denotes the ring of integers modulo $n$.
		
		The classification of finite rings of order $p$, $p^2$ and $pq$, where $p$ and $q$ are distinct primes, can be found in \cite{clring}.	If $p$ is a prime number, then there are exactly two rings of order $p$, namely, $\Zl_p$ and $C_p(0)$, up to isomorphism. If $p$ and $q$ are distinct primes, then there are exactly four rings of order $pq$, up to isomorphism. These are $\Zl_{pq}$, $C_{pq}(0)$, $C_p(0)\times \Zl_q$ and $\Zl_p\times C_q(0)$. 
		
		If $R$ is a finite ring, then its additive group is a finite abelian group, and so it has finitely many generators, say, $a_1,\hdots,a_k$ of order $n_1,\hdots,n_k$, respectively. As $a_ia_j$ is in an element in $R$, we find that $a_ia_j=\sum_{t=1}^{k}c_{ij}^ta_t$ with $c_{ij}^t\in\Zl_{n_t}$. Thus, 
		$$R=\left\langle a_1,\hdots,a_k:n_ia_i=0~\text{for}~ i\in\{1,\hdots,k\}, a_ia_j=\sum_{t=1}^{k}c_{ij}^ta_t\right\rangle.$$
		The following theorem completely classifies finite rings of order $p^2$, where $p$ is a prime number.
		\begin{theorem}[\cite{clring}]\label{clr}
			For any prime number $p$, there are exactly 11 rings of order $p^2$, namely:
			\begin{align*}
				\mathscr{A}(p) & = \mathbb{Z}_{p^2}, \\
				\mathscr{B}(p) & = \langle a : p^2a = 0, \, a^2 = pa \rangle, \\
				\mathscr{C}(p) & = C_{p^2}(0), \\
				\mathscr{D}(p) & = \mathbb{Z}_p \times \mathbb{Z}_p, \\
				\mathscr{E}(p) & = \langle a, b : pa = pb = 0, \, a^2 = a, \, b^2 = b, \, ab = a, \, ba = b \rangle, \\
				\mathscr{F}(p) & = \langle a, b : pa = pb = 0, \, a^2 = a, \, b^2 = b, \, ab = b, \, ba = a \rangle, \\
				\mathscr{G}(p) & = \langle a, b : pa = pb = 0, \, a^2 = 0, \, b^2 = b, \, ab = a, \, ba = a \rangle, \\
				\mathscr{H}(p) & = \mathbb{Z}_p \times C_p(0), \\
				\mathscr{I}(p) & = \langle a, b : pa = pb = 0, \, a^2 = b, \, ab = 0 \rangle, \\
				\mathscr{J}(p) & = (C_p \times C_p)(0),~\text{and} \\
				\mathscr{K}(p) & = \mathbb{F}_{p^2},~\text{where $\mathbb{F}_n$ denotes the finite field of order $n$}.
			\end{align*}
		\end{theorem}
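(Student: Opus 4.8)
The plan is to organise the classification around the additive group of a ring $R$ of order $p^2$, which is abelian and hence isomorphic either to $\mathbb{Z}_{p^2}$ or to $\mathbb{Z}_p\times\mathbb{Z}_p$; I expect these two cases to account for $3$ and $8$ isomorphism classes respectively. In the cyclic case, if $a$ generates $(R,+)\cong\mathbb{Z}_{p^2}$ then the whole multiplication is fixed by a single structure constant $c\in\mathbb{Z}_{p^2}$ via $a^2=ca$, and associativity is automatic since $(aa)a$ and $a(aa)$ both equal $c^2a$. An additive automorphism sends $a\mapsto ua$ for a unit $u$, under which $c\mapsto uc$, so the isomorphism classes correspond exactly to the orbits of $\mathbb{Z}_{p^2}$ under multiplication by units. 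There are precisely three such orbits, namely $\{0\}$, the units, and the nonzero non-units, yielding $\mathscr{C}(p)$ (from $c=0$), $\mathscr{A}(p)=\mathbb{Z}_{p^2}$ (from $c$ a unit, normalised to $a^2=a$), and $\mathscr{B}(p)$ (from $c$ a non-unit, normalised to $a^2=pa$).

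For the elementary abelian case I would regard $R$ as a $2$-dimensional associative $\mathbb{F}_p$-algebra and split on whether $R$ has an identity. A unital $2$-dimensional algebra is forced to be commutative and isomorphic to $\mathbb{F}_p[x]/(f)$ for a monic quadratic $f$; classifying $f$ by its factorisation type (two distinct roots, irreducible, or a repeated root) produces exactly the three unital rings $\mathscr{D}(p)=\mathbb{Z}_p\times\mathbb{Z}_p$, $\mathscr{K}(p)=\mathbb{F}_{p^2}$, and $\mathscr{G}(p)\cong\mathbb{F}_p[x]/(x^2)$ (the dual numbers, for which the presentation in the statement has identity $b$).

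For the non-unital algebras I would use the dichotomy nilpotent versus non-nilpotent. If $R$ is nilpotent then $R^2\subsetneq R$ forces $\dim R^2\le 1$: when $R^2=0$ we get the zero ring $\mathscr{J}(p)=(C_p\times C_p)(0)$, and when $\dim R^2=1$ one checks that $R^3=0$, that all products except one square vanish, and that after scaling $a^2$ spans $R^2$, giving $\mathscr{I}(p)$. If $R$ is not nilpotent it contains a nonzero idempotent $e$, and I would apply the Peirce decomposition $R=R_{11}\oplus R_{10}\oplus R_{01}\oplus R_{00}$ relative to $e$; since $R$ is non-unital, $R_{11}=\mathbb{F}_p e$ and exactly one of the remaining components is one-dimensional, spanned by some $f$. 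Associativity is the decisive constraint here: in the $R_{10}$ and $R_{01}$ cases it forces $f^2=0$, producing the two mutually opposite non-commutative rings $\mathscr{F}(p)$ and $\mathscr{E}(p)$, while the $R_{00}$ case with $f^2=0$ gives $\mathscr{H}(p)=\mathbb{Z}_p\times C_p(0)$ (the branch $f^2=f$ there merely reproduces the already counted unital $\mathscr{D}(p)$).

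Finally I would verify that these eleven rings are pairwise non-isomorphic using invariants preserved by ring isomorphism: the additive group separates the $3$ cyclic rings from the $8$ others, and then commutativity, the presence of a two-sided or merely one-sided identity, the number of idempotents, $\dim R^2$, and whether $R$ is a field distinguish the remaining classes. I expect the main obstacle to lie in the non-unital, non-nilpotent subcase: carrying out the Peirce bookkeeping correctly, invoking associativity to discard the spurious $f^2=f$ branches, and above all separating $\mathscr{E}(p)$ from $\mathscr{F}(p)$, which are anti-isomorphic and are told apart only by the fact that one possesses right identities but no left identity while the other is the reverse. A secondary point requiring care is characteristic $2$, where several normalisations (rescaling a square to $1$, counting idempotents through a relation such as $2\alpha=\alpha$) degenerate and must be checked separately, although the final count remains $11$.
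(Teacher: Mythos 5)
There is a mismatch to flag first: the paper contains no proof of Theorem \ref{clr} at all --- it is imported verbatim from Fine's classification \cite{clring} --- so the comparison can only be against the standard argument in that reference. Judged on its own, your outline is correct and, once the sketched verifications are written out, would constitute a complete proof. Your first move (splitting on the additive group, $\mathbb{Z}_{p^2}$ versus $\mathbb{Z}_p\times\mathbb{Z}_p$) is the same as Fine's, and your cyclic case is exactly right: the multiplication is $a^2=ca$, associativity is automatic, an additive automorphism $a\mapsto ua$ sends $c\mapsto uc$, and the three unit-orbits $\{0\}$, $(\mathbb{Z}_{p^2})^\times$, $p\cdot(\mathbb{Z}_{p^2})^\times$ give $\mathscr{C}(p)$, $\mathscr{A}(p)$, $\mathscr{B}(p)$. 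Where you genuinely diverge is the elementary abelian case: Fine works through explicit multiplication tables on two generators, while you classify the unital algebras as $\mathbb{F}_p[x]/(f)$ by the factorization type of $f$ (giving $\mathscr{D}$, $\mathscr{K}$, $\mathscr{G}$), then split the non-unital ones by nilpotency and apply the Peirce decomposition relative to an idempotent. This buys a shorter, more structural derivation and correctly isolates the delicate point: with $R_{11}=\mathbb{F}_p e$ and, say, $R_{10}=\mathbb{F}_p f$, one has $f^2=\lambda f$ and associativity in the form $(fe)f=f(ef)$ gives $0=\lambda f$, so $f^2=0$ is forced, producing $\mathscr{F}(p)$ and (in the $R_{01}$ case) $\mathscr{E}(p)$, which are indeed anti-isomorphic and separated by having left but no right identities, respectively the reverse --- the invariant you name. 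Two minor points to tighten when writing this up: justify the dichotomy itself, i.e.\ that a finite ring which is not nilpotent contains a nonzero idempotent (a power of any non-nilpotent element works; equivalently, finite nil rings are nilpotent, which also makes your $R^2\subsetneq R$ and $R^3=0$ steps rigorous); and note that your characteristic-$2$ caveat is actually unnecessary, since none of your normalizations divides by $2$ and monic quadratics over $\mathbb{F}_2$ still fall into the three factorization types, so no separate $p=2$ check is needed. With those remarks the count $3+8=11$ and the invariants you list for pairwise non-isomorphism are complete.
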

		Note that among these $11$ rings of order $p^2$, only $4$ have a multiplicative identity. These are $\mathscr{A}(p)$, $\mathscr{D}(p)$, $\mathscr{G}(p)$ and $\mathscr{K}(p)$. For example, $\mathscr{G}(2)=\{0,a,b,a+b\}$ has the multiplicative identity $b$. Observe that $\mathscr{G}(2)$ is isomorphic to the quotient ring $\mathbb{F}_2[x]/(x^2)$.
		
		Let $\Gamma$ be a finite additive group. Let $C\subset\Gamma\setminus\{0\}$ with $-C=C$, where $0$ is the identity element of $\Gamma$ and $-C=\{-c:c\in C\}$. The \emph{Cayley graph} of $\Gamma$ with respect to the connection set $C$, denoted $\cay(\Gamma, C)$, is a simple graph whose vertex set is $\Gamma$, and two vertices $u$ and $v$ are adjacent if $u-v\in C$. 
		
		Let $R$ be a finite commutative ring, and $R^\times$ denote the set of all units of $R$. Let $Q_R=\{u^2:u\in R^\times\}$ and $T_R=Q_R\cup (-Q_R)$. The Cayley graphs $\cay(R,R^\times)$ and $\cay(R,T_R)$ are known as the unitary Cayley graph and quadratic unitary Cayley graph over $R$, respectively. For the sake of simplicity, we denote $\cay(R,R^\times)$ by $G_R$ and $\cay(R,T_R)$ by $\Gr$. We refer to~\cite{ucr2, qucfirst, quc, ucr3, ucg} for more details about the graphs $G_R$ and $\Gr$.

		Thongsomnuk and Meemark~\cite{ctw_uc} studied perfect state transfer in continuous-time quantum walk on unitary Cayley graphs and gcd graphs. We studied periodicity and perfect state transfer in Grover walks on $G_{\mathcal{\Zl}_n}$ and $\Gn$ in~\cite{bhakta1, bhakta2}. In this paper, we extend the main results in \cite{bhakta1, bhakta2} over any finite commutative ring with unity. The periodicity and perfect state transfer on the unitary Cayley graph $G_R$ are discussed in Theorems \ref{ucrp} and \ref{ucrpst}. Also, the periodicity and perfect state transfer on the quadratic unitary Cayley graph $\Gr$ are explored in Theorems~\ref{qucrp}, \ref{qucrpst}, \ref{peroth} and \ref{lastth}, under certain conditions on $R$.

		The rest of the paper is organized as follows. In the next section, we give the definition of Grover walks on graphs and introduce the concepts of periodicity and perfect state transfer in Grover walks. Additionally, we present some results related to the periodicity and perfect state transfer. The third section explores the relationship between graph automorphisms and perfect state transfer. We also show that graph isomorphism preserves perfect state transfer. The fourth section provides a characterization of periodicity and perfect state transfer on unitary Cayley graphs over finite commutative rings with unity. In the last section, we characterize periodicity and perfect state transfer on quadratic unitary Cayley graphs over finite commutative rings with some additional conditions.
		\section{Grover walks}
		Let $G$ be a finite, simple, and connected graph with vertex set $V(G)$ and edge set $E(G)$. If two vertices $u$ and $v$ in $G$  are adjacent, then the edge between them is denoted by $uv$. Note that $uv$ and $vu$ represent the same edge. Thus, $E(G)\subseteq\{uv:u,v\in V(G), u\neq v\}$, with the convention that $uv=vu$.  Define $\mathcal{A}(G)=\{(u,v),(v,u): uv\in E(G)\},$ the set of symmetric arcs of $G$. Let $a=(u,v)\in\mathcal{A}(G)$.  The \emph{tail} $u$ and \emph{head} $v$ of $a$ are denoted by $o(a), t(a)$, respectively. The inverse arc of $a$, denoted $a^{-1}$, is the arc $(v,u)$.

		For a matrix $M$, let $M_{uv}$ denote the entry in the $u$-th row and $v$-th column of $M$. The \emph{boundary matrix} $N:=N(G)\in \mathbb{C}^{V(G)\times \mathcal{A}(G)}$ of $G$ is defined by $N_{ua}=\frac{1}{\sqrt{\deg u}}\delta _{u, t(a)},$ where $\delta_{a,b}$ is the Kronecker delta function and $\deg u$ is the degree of the vertex $u$. The \emph{shift matrix} $S:=S(G)\in \mathbb{C}^{\mathcal{A}(G) \times \mathcal{A}(G)}$ of $G$ is defined by $S_{ab}=\delta_{a,b^{-1}}.$ The \emph{time evolution matrix} $U:=U(G)\in \mathbb{C}^{\mathcal{A}(G)\times \mathcal{A}(G)}$ of $G$ is defined by 
		$$U=S(2N^*N-I).$$ 
		We refer to \cite{mixedpaths} for more details about these matrices. A discrete-time quantum walk on a graph $G$ is determined by a unitary matrix that acts on complex functions defined on the set of symmetric arcs of $G$. The discrete-time quantum walks defined by $U$ are referred to as \emph{Grover walks}. For $\Phi,\Uppsi\in \Cl^{\mathcal{A}(G)}$, let $\ip{\Phi}{\Uppsi}$ denote the Euclidean inner product of $\Phi$ and $\Uppsi$. A vector $\Phi\in \Cl^{\mathcal{A}(G)}$ is called a \emph{state} if $\ip{\Phi}{\Phi}=1$. Let $\Phi_0$ be the initial quantum state. Then the quantum state $\Phi_\tau$ at time $\tau$ is given by the evolution equation $\Phi_\tau=U^\tau\Phi_0$. 
		\begin{defi}  
			{\em The Grover walk on a graph $G$ is \emph{periodic} if there exists a positive integer $\tau$ such that the time  evolution operator $U$ of $G$ satisfies $U^\tau = I$. }
		\end{defi}  
		For convenience, we say  a graph $G$ is periodic to mean the Grover walk on $G$ is periodic. The \emph{discriminant matrix} $P:=P(G)\in \Cl^{V(G)\times V(G)}$ of a graph $G$ is defined by $P=NSN^*$. The \emph{adjacency matrix} $A:=A(G)\in \Cl^{V(G)\times V(G)}$ of $G$ is defined by 
		$$A_{uv} = \left\{ \begin{array}{rl}
			1 &\mbox{ if }
			uv\in E(G) \\ 
			0 &\textnormal{ otherwise.}
		\end{array}\right.$$ 
		If a graph is regular, then the discriminant and adjacency matrices are correlated.
		\begin{lema}[\cite{qq}]\label{discp}
			Let $P$ and $A$ be the discriminant and adjacency matrix of a graph $G$, respectively. If $G$ is a $k$-regular graph, then $P=\frac{1}{k}A$.
		\end{lema}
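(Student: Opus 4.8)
The plan is to prove the identity entrywise, computing the $(u,v)$-entry of $P=NSN^*$ directly from the definitions of $N$, $S$ and $N^*$, and then comparing with $A_{uv}$. Since $N$ has only real entries, I would first note that $N^*=N^{\top}$, so $(N^*)_{bv}=N_{vb}=\frac{1}{\sqrt{\deg v}}\,\delta_{v,t(b)}$. Expanding the matrix product gives
$$P_{uv}=\sum_{a,b\in\mathcal{A}(G)} N_{ua}\,S_{ab}\,(N^*)_{bv}=\sum_{a,b\in\mathcal{A}(G)}\frac{1}{\sqrt{\deg u}}\,\delta_{u,t(a)}\;\delta_{a,b^{-1}}\;\frac{1}{\sqrt{\deg v}}\,\delta_{v,t(b)}.$$

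The next step is to collapse the double sum using the factor $\delta_{a,b^{-1}}$ coming from $S$: the only surviving terms are those with $a=b^{-1}$, so the sum reduces to a single sum over arcs $b$. Here the one small point requiring care is the arc bookkeeping, namely that $t(a)=t(b^{-1})=o(b)$, so the condition $\delta_{u,t(a)}$ becomes $\delta_{u,o(b)}$. After this substitution one obtains
$$P_{uv}=\frac{1}{\sqrt{\deg u}\,\sqrt{\deg v}}\;\bigl|\{\,b\in\mathcal{A}(G): o(b)=u,\ t(b)=v\,\}\bigr|.$$

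I would then argue that because $G$ is simple, there is at most one arc with a prescribed tail $u$ and head $v$, and exactly one precisely when $uv\in E(G)$, i.e.\ when $(u,v)\in\mathcal{A}(G)$; in particular the count vanishes on the diagonal since there are no loops. Hence the cardinality above equals $A_{uv}$, giving $P_{uv}=\frac{1}{\sqrt{\deg u}\,\sqrt{\deg v}}\,A_{uv}$ for every pair $u,v$. Finally, specializing to the $k$-regular case, where $\deg u=\deg v=k$ for all vertices, yields $P_{uv}=\frac{1}{k}A_{uv}$, and therefore $P=\frac{1}{k}A$, as claimed.

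I do not expect any genuine obstacle here: the result is a direct computation, and the only thing one must be attentive to is the identification $t(b^{-1})=o(b)$ and the simple-graph fact that the number of arcs from $u$ to $v$ is exactly $A_{uv}$, which is what converts the degree-weighted arc count into the adjacency matrix.
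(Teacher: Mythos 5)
Your proof is correct: the entrywise expansion of $P_{uv}=\sum_{a,b\in\mathcal{A}(G)}N_{ua}S_{ab}(N^*)_{bv}$, the collapse via $a=b^{-1}$ with the identification $t(b^{-1})=o(b)$, and the simple-graph arc count giving $P_{uv}=\frac{1}{\sqrt{\deg u\,\deg v}}A_{uv}$ all check out, and regularity then yields $P=\frac{1}{k}A$. The paper itself imports this lemma from \cite{qq} without proof, but the very same computation (including the degree-weighted case analysis on whether $uv\in E(G)$) appears verbatim inside its proof of Lemma~\ref{per1}, so your argument is essentially the approach the paper relies on; indeed you prove the slightly more general fact that $P$ is the degree-normalized adjacency matrix before specializing to the $k$-regular case.
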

		Define $\Delta=\{a\pm\sqrt{b}:a,b\in \Ql~\text{and}~b ~\text{is not a square}\}$ and $\overline{\Delta}=\Rl\setminus(\Ql\cup\Delta)$. Let $\spec_P(G)$ denote the set of all distinct eigenvalues of $P$ to a graph $G$. Define $\spec_P^F(G)=F\cap\spec_P(G)$, for a subset $F$ of real numbers. Let $\Re=\{(z+\overline{z})/2: z\in \Cl~\text{and}~ z^n=1~\text{for some positive integer $n$}\}$. We use the spectral analysis of the discriminant matrix $P$ to investigate the periodicity of a graph.
		\begin{theorem}[\cite{bhakta2}]\label{ls}
			Let $G$ be a regular graph with discriminant matrix $P$. Then $G$ is periodic if and only if $$\spec_P^\Ql(G)\subseteq\left\{\pm1,\pm\frac{1}{2},0\right\},~\spec_P^\Delta(G)\subseteq\left\{\pm \frac{\sqrt{3}}{2}, \pm\frac{1}{4}\pm\frac{\sqrt{5}}{4},\pm\frac{1}{\sqrt{2}}\right\}~\text{and}~\spec_P^{\overline{\Delta}}(G)\subset \Re.$$
		\end{theorem}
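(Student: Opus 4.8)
The plan is to pass from the arc-space operator $U$ to the vertex-space operator $P$ via the spectral mapping theorem for Grover walks, and then to convert the condition $U^\tau=I$ into an arithmetic condition on $\spec_P(G)$. First I would record that, since $G$ is $k$-regular, Lemma~\ref{discp} gives $P=\frac1k A$, so $P$ is real symmetric with $\spec_P(G)\subseteq[-1,1]$. Then I would invoke the spectral mapping theorem relating $U=S(2N^*N-I)$ to $P=NSN^*$: each eigenvalue $\lambda$ of $P$ with $-1<\lambda<1$ lifts to the pair $e^{\pm\iu\arccos\lambda}$ of eigenvalues of $U$, the values $\lambda=\pm1$ lift to $\pm1$, and all remaining eigenvalues of $U$ are themselves $\pm1$ (the boundary contributions coming from $\ker N$ and the cycle/co-cycle structure). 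Because $U$ is unitary with finite spectrum, $U^\tau=I$ for some positive integer $\tau$ if and only if every eigenvalue of $U$ is a root of unity.

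Next I would translate this criterion downward. The eigenvalues $\pm1$ are automatically roots of unity, so $G$ is periodic if and only if $e^{\iu\arccos\lambda}$ is a root of unity for every $\lambda\in\spec_P(G)\cap(-1,1)$. Writing $\lambda=\cos\theta$, this holds exactly when $\theta$ is a rational multiple of $\pi$, i.e. $\lambda=\cos(\pi r)$ for some $r\in\Ql$. Since $(z+\overline z)/2=\cos\theta$ whenever $z=e^{\iu\theta}$ is a root of unity, the set $\Re$ is precisely $\{\cos(\pi r):r\in\Ql\}$, and it already contains $0,\pm\tfrac12,\pm1$. Hence $G$ is periodic if and only if $\spec_P(G)\subseteq\Re$.

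Finally I would refine the single condition $\spec_P(G)\subseteq\Re$ by sorting eigenvalues according to their algebraic degree over $\Ql$, matching the partition $\Rl=\Ql\cup\Delta\cup\overline{\Delta}$. By Niven's theorem the only rational values of $\cos(\pi r)$ are $0,\pm\tfrac12,\pm1$, so $\Re\cap\Ql=\{0,\pm\tfrac12,\pm1\}$, which yields the stated constraint on $\spec_P^{\Ql}(G)$. A Niven-type classification of the rational angles whose cosine is a quadratic irrational shows that $\Re\cap\Delta$ is exactly the finite set $\{\pm\tfrac{\sqrt3}{2},\,\pm\tfrac14\pm\tfrac{\sqrt5}{4},\,\pm\tfrac1{\sqrt2}\}$, arising from the angles $\pi/6$, $\pi/4$, $\pi/5$ and their supplements; this gives the constraint on $\spec_P^{\Delta}(G)$. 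For eigenvalues of degree at least three no finite list is available, so the requirement stays as $\spec_P^{\overline{\Delta}}(G)\subset\Re$. Conjoining the three membership conditions reproduces the displayed characterization, and each step is reversible, giving both directions.

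I expect the main obstacle to be twofold. The first is establishing the spectral mapping theorem with a correct accounting of the $\pm1$ eigenvalues, i.e. the precise multiplicities of the boundary eigenvalues tied to $\ker N$ and $\ker(S\mp I)$; a miscount there could spuriously introduce an eigenvalue of $U$ and so break the ``only if'' direction. The second is the number-theoretic step pinning down $\Re\cap\Delta$ exactly, which relies on the classification of those rational angles whose cosine is a quadratic irrational and must be verified to leave no further quadratic values.
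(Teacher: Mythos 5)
Your proposal is correct and follows essentially the same route as the source: this paper states Theorem~\ref{ls} without proof, importing it from \cite{bhakta2}, where the argument is exactly your chain --- the Higuchi--Konno--Sato--Segawa spectral mapping theorem reducing $U^\tau=I$ to every lifted eigenvalue $e^{\pm\iu\arccos\mu}$, $\mu\in\spec_P(G)$, being a root of unity (the residual eigenvalues of $U$ being $\pm1$, hence harmless), so that periodicity is equivalent to $\spec_P(G)\subseteq\Re$, followed by Niven's theorem for the rational part and the classification of $n$ with $\varphi(n)=4$ (i.e.\ $n\in\{5,8,10,12\}$) for the quadratic part. Your two flagged obstacles are the right ones to check, but neither is a gap: the $\pm1$ multiplicities are irrelevant to the periodicity criterion, and the quadratic list $\left\{\pm\frac{\sqrt{3}}{2},\pm\frac{1}{4}\pm\frac{\sqrt{5}}{4},\pm\frac{1}{\sqrt{2}}\right\}$ is confirmed by the degree formula $[\Ql(\cos(2\pi/n)):\Ql]=\varphi(n)/2$.
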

		A graph $G$ is called an \emph{integral graph} if $\spec_A(G)\subset \Zl$. By the previous result, an integral regular graph $G$ is periodic if and only if $\spec_P(G)\subseteq \left\{\pm1,\pm\frac{1}{2},0\right\}$.
		
		Let $\Phi$ and $\Uppsi$ be two distinct states of a graph $G$. Then we say \emph{perfect state transfer} occurs from $\Phi$ to $\Uppsi$ at time $\tau$ if there exists a unimodular complex number $\gamma$ such that $U^\tau\Phi=\gamma\Uppsi$. The following lemma gives an equivalent definition of perfect state transfer.
		\begin{lema} [\cite{dqw1}]\label{st11}
			Let $G$ be a graph and $U$ be its time evolution matrix. The perfect state transfer occurs from a state $\Phi$ to another state $\Psi$ at time $\tau$ if and only if  $|\ip{U^\tau \Phi}{\Uppsi}|=1.$ 
		\end{lema}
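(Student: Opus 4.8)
The plan is to prove the two implications separately, relying on the unitarity of $U$ together with the equality case of the Cauchy--Schwarz inequality. The first observation I would record is that $U$ is unitary, so $U^\tau$ is unitary as well and therefore preserves the Euclidean inner product; in particular, since $\Phi$ is a state, the vector $U^\tau\Phi$ is again a unit vector, because $\ip{U^\tau\Phi}{U^\tau\Phi}=\ip{\Phi}{\Phi}=1$. This fact is what links the algebraic condition (proportionality to $\Uppsi$) with the analytic condition (the modulus of an inner product).

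For the forward implication, suppose perfect state transfer occurs, so that $U^\tau\Phi=\gamma\Uppsi$ for some unimodular $\gamma$. Substituting this into the inner product and pulling the scalar out gives $\ip{U^\tau\Phi}{\Uppsi}=\gamma\ip{\Uppsi}{\Uppsi}$ (up to complex conjugation of $\gamma$, depending on which slot the convention makes conjugate-linear). Since $\Uppsi$ is a state, $\ip{\Uppsi}{\Uppsi}=1$, and hence $|\ip{U^\tau\Phi}{\Uppsi}|=|\gamma|=1$, as required.

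For the converse, assume $|\ip{U^\tau\Phi}{\Uppsi}|=1$. By the opening observation, both $U^\tau\Phi$ and $\Uppsi$ are unit vectors, so the Cauchy--Schwarz inequality yields $|\ip{U^\tau\Phi}{\Uppsi}|\le 1$, with equality precisely when $U^\tau\Phi$ and $\Uppsi$ are linearly dependent. The equality hypothesis therefore forces $U^\tau\Phi=\gamma\Uppsi$ for some scalar $\gamma$, and comparing norms of both sides gives $|\gamma|=1$. This exhibits the required unimodular constant, and the definition of perfect state transfer is met.

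I expect the only delicate point to be the converse direction, where one must invoke the equality condition of Cauchy--Schwarz rather than the inequality itself and then confirm that the resulting proportionality constant is unimodular. The unitarity of $U^\tau$ is the hinge of the whole argument: it is what guarantees that $U^\tau\Phi$ has unit norm, which is essential both for reaching the equality case and for the subsequent norm comparison. Everything else reduces to a direct computation from the definitions of a state and of perfect state transfer.
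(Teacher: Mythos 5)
Your proof is correct and complete: the unitarity of $U^\tau$ gives $\lVert U^\tau\Phi\rVert=1$, the forward direction is a direct substitution, and the converse follows from the equality case of Cauchy--Schwarz together with a norm comparison to see $|\gamma|=1$. Note that the paper itself gives no proof of this lemma---it is quoted from Zhan \cite{dqw1}---and your argument is exactly the standard one underlying that citation, so there is nothing to flag.
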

		We focus on transferring states that are concentrated on specific vertices of a graph, referred to as vertex-type states. A state $\Phi$ is called a \emph{vertex-type} state if there exists $u\in V(G)$ such that $\Phi=N^*\eu$. See \cite{pstdc} for motivation on focusing on the transfer of states between vertex-type states of a graph.
		
		\begin{defi}
			{\em A graph $G$ exhibits \emph{perfect state transfer}  from a vertex $u$ to another vertex $v$ at time $\tau\in\Nl$ if $G$ exhibits perfect state transfer from the state $N^*\eu$ to the state $N^*\ev$ at time $\tau$, that is, there exists a unimodular complex number $\gamma$ such that $U^\tau N^*\eu=\gamma N^*\ev$.}
		\end{defi}
		We say a graph $G$ exhibits perfect state transfer if there exist vertices $u$ and $v$ in $G$ such that perfect state transfer occurs from $u$ to $v$ at some time $\tau$.

		A graph is called \emph{vertex-transitive} if, for any pair of vertices in $G$, there exists an automorphism that maps one vertex to the other. It is known that periodicity is a necessary condition for the occurrence of perfect state transfer on a vertex-transitive graph. 
		\begin{theorem}[\cite{bhakta1}]\label{thmper}
			Let $G$ be a vertex-transitive graph. If $G$ exhibits perfect state transfer, then it is periodic.
		\end{theorem}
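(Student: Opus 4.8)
The plan is to exploit the automorphism supplied by vertex-transitivity to turn the single-vector identity coming from perfect state transfer into a global statement about $U$. First I would record three elementary facts. Since $G$ is $k$-regular, a direct computation gives $NN^*=I$, so the vectors $N^*\ew$ $(w\in V(G))$ are orthonormal and $N^*N$ is the orthogonal projection onto $\mathcal{W}:=\operatorname{range}(N^*)$; consequently $U N^*\ew = S(2N^*N-I)N^*\ew = SN^*\ew$. Next, given the vertices $u,v$ between which perfect state transfer occurs, vertex-transitivity yields $\sigma\in\aut(G)$ with $\sigma(u)=v$. Its induced arc-permutation matrix $P_\sigma$ satisfies $P_\sigma N^*\ew=N^*\e_{\sigma(w)}$, commutes with $S$, and preserves $\mathcal{W}$, hence commutes with $N^*N$ and therefore with $U$.

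With $U^\tau N^*\eu=\gamma N^*\ev$ in hand, I would iterate using $P_\sigma U = U P_\sigma$ and $N^*\ev = P_\sigma N^*\eu$ to prove by induction that $U^{k\tau}N^*\eu=\gamma^k N^*\e_{\sigma^k(u)}$. Taking $n$ to be the order of $\sigma$ gives $U^{n\tau}N^*\eu=\gamma^n N^*\eu$. Transitivity then promotes this to every vertex: for $w\in V(G)$ choose $\rho\in\aut(G)$ with $\rho(u)=w$ and apply $P_\rho$ (which commutes with $U^{n\tau}$) to get $U^{n\tau}N^*\ew=\gamma^n N^*\ew$. Thus $U^{n\tau}=\gamma^n I$ on $\mathcal{W}$, and since $U^{n\tau}(SN^*\ew)=U^{n\tau}(U N^*\ew)=U(U^{n\tau}N^*\ew)=\gamma^n SN^*\ew$, also on $S\mathcal{W}$; hence $U^{n\tau}=\gamma^n I$ on $V_1:=\mathcal{W}+S\mathcal{W}$.

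The step I expect to be the main obstacle is controlling the phase $\gamma$, since a priori it is only unimodular. Here I would use that a vertex-transitive graph is regular, so $(N^*N)_{ab}=\tfrac1k\,\delta_{t(a),t(b)}$ is rational and $S$ is a permutation matrix; therefore $U$, and with it $U^\tau$, has entries in $\Ql$. Since $\gamma=\ip{U^\tau N^*\eu}{N^*\ev}$ while $N^*\eu$ and $N^*\ev$ take values in $\tfrac1{\sqrt k}\Zl$, this inner product lies in $\Ql$; being unimodular, it must equal $\pm1$. Hence $\gamma^n=\pm1$ and $U^{2n\tau}=I$ on $V_1$.

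It remains to handle $V_2:=V_1^\perp=\ker N\cap S\ker N$. For $x\in V_2$ one has $Nx=0$, so $(2N^*N-I)x=-x$ and $Ux=-Sx$; since $V_2$ is $S$-invariant it is $U$-invariant and $U^2=S^2=I$ there. Combining the two pieces, $U^{2n\tau}=I$ on $V_1$ (as $\gamma^{2n}=1$) and $U^{2n\tau}=(U^2)^{n\tau}=I$ on $V_2$, so $U^{2n\tau}=I$ on all of $\Cl^{\mathcal{A}(G)}$ and $G$ is periodic.
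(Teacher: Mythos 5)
Your proof is correct, and it takes a genuinely different route from the one behind the statement in the paper. The paper imports this theorem from \cite{bhakta1}, and the argument there (reflected in the machinery this paper reproduces: Lemma \ref{pstdef}, Theorem \ref{p1}, eigenvalue supports) is spectral: perfect state transfer forces $T_\tau(\mu)=\pm1$ for every $\mu$ in the eigenvalue support of $u$; vertex-transitivity makes that support all of $\spec_P(G)$, since the diagonal entries of each projection $E_r$ are constant and $\operatorname{tr}(E_r)>0$; and the spectral mapping theorem for Grover walks then converts $\cos(\tau\arccos\mu)\in\{\pm1\}$ for all $\mu\in\spec_P(G)$ into $U^{2\tau}=I$, the birth eigenvalues $\pm1$ of $U$ contributing trivially. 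You avoid that machinery entirely and argue directly on the arc space: the orthogonal decomposition $\Cl^{\mathcal{A}(G)}=(\mathcal{W}+S\mathcal{W})\oplus(\ker N\cap S\ker N)$, the commutation of the induced arc permutation $P_\sigma$ with $U$ (the arc-level analogue of Lemma \ref{per1}), the identity $UN^*=SN^*$, and the involution $U^2=I$ on the second summand; every step checks out, and in fact $NN^*=I$ holds for any graph without isolated vertices, so regularity is needed only for your rationality bookkeeping. What each approach buys: the spectral route is shorter given the cited background and yields the sharp period $2\tau$; yours is elementary and self-contained, at the price of the weaker bound $2n\tau$ with $n$ the order of $\sigma$. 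Two economies are available to you. First, since $U$ is a real matrix, $\gamma=\ip{U^\tau N^*\eu}{N^*\ev}$ is real and unimodular, hence $\pm1$ outright --- this is exactly Lemma \ref{pstdef}, so the rationality detour is unnecessary. Second, since $T_\tau(P)$ is symmetric, $\ip{U^\tau N^*\ev}{N^*\eu}=T_\tau(P)_{uv}=T_\tau(P)_{vu}=\gamma$, and the equality case of Cauchy--Schwarz for unit vectors gives $U^\tau N^*\ev=\gamma N^*\eu$ with the \emph{same} phase; hence $U^{2\tau}N^*\eu=\gamma^2N^*\eu=N^*\eu$, and running your transitivity argument on this identity gives $U^{2\tau}=I$ on $\mathcal{W}+S\mathcal{W}$, hence globally (as $2\tau$ is even), recovering the optimal period $2\tau$ without ever invoking the order of $\sigma$.
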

		Kubota and Segawa \cite{pstdc} established a connection between the Chebyshev polynomials and the occurrence of perfect state transfer in a graph. The following recurrence relation defines the Chebyshev polynomials of the first kind: 
		$$T_0(x)=1,~T_1(x)=x~\text{and}~T_{n+1}(x)=2xT_n(x)-T_{n-1}(x)~\text{for}~n\geq1.$$

		\begin{lema}[\cite{pstdc}]\label{ch11}
			Let $T_n(x)$ be the Chebyshev polynomial of the first kind.	Let $G$ be a graph with the time evolution matrix $U$ and discriminant $P$. Then $NU^\tau N^* = T_\tau (P)$ for $\tau \in \Nl \cup \{0\}$. 
		\end{lema}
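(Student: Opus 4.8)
The plan is to prove the identity by induction on $\tau$, engineering the Chebyshev three-term recurrence directly at the level of the matrices $NU^\tau N^*$. First I would record the structural facts that make the boundary and shift matrices tractable. A short entrywise computation shows $NN^*=I$ on $\Cl^{V(G)}$, where the normalization $1/\sqrt{\deg u}$ is chosen precisely so that this holds (here connectedness guarantees $\deg u\geq 1$). Next, $S$ is a Hermitian involution, so $S^2=I$, and the coin $C:=2N^*N-I$ satisfies $C^2=I$ together with $NC=N$, both following from $NN^*=I$. Consequently $U=SC$ is unitary with $U^{-1}=U^*=CS$, and $P=NSN^*$ by definition.

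The heart of the argument is a pair of intertwining relations derived from these facts, namely $UN^*=SN^*$ and $U^{-1}N^*=2N^*P-SN^*$. The first comes from $UN^*=(2SN^*N-S)N^*=2SN^*(NN^*)-SN^*=SN^*$. The second comes from $U^{-1}N^*=CSN^*=(2N^*N-I)SN^*=2N^*(NSN^*)-SN^*=2N^*P-SN^*$.

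With these in hand I would compute, for $\tau\geq 1$, that $NU^{\tau+1}N^*=NU^\tau(UN^*)=NU^\tau SN^*$ and that $NU^{\tau-1}N^*=NU^\tau(U^{-1}N^*)=2(NU^\tau N^*)P-NU^\tau SN^*$. Adding these two expressions cancels the common term $NU^\tau SN^*$ and yields the recurrence $NU^{\tau+1}N^*=2(NU^\tau N^*)P-NU^{\tau-1}N^*$. The base cases are immediate: $NU^0N^*=NN^*=I=T_0(P)$, and $NU^1N^*=N(UN^*)=NSN^*=P=T_1(P)$. An induction then closes the argument: assuming $NU^{\tau-1}N^*=T_{\tau-1}(P)$ and $NU^\tau N^*=T_\tau(P)$, the recurrence gives $NU^{\tau+1}N^*=2T_\tau(P)P-T_{\tau-1}(P)=T_{\tau+1}(P)$.

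The one spot that needs care is the side on which $P$ sits: the computation naturally produces $2(NU^\tau N^*)P$, whereas the Chebyshev recurrence $T_{\tau+1}(x)=2xT_\tau(x)-T_{\tau-1}(x)$ places the factor on the left. This is harmless, since by the inductive hypothesis $NU^\tau N^*=T_\tau(P)$ is a polynomial in $P$ and therefore commutes with $P$, giving $2T_\tau(P)P=2PT_\tau(P)$ so that the two recurrences coincide. Apart from this bookkeeping, the only genuine computations are the two intertwining identities, and each of those reduces to $NN^*=I$ together with the definition $P=NSN^*$; I expect verifying $UN^*=SN^*$ and $U^{-1}N^*=2N^*P-SN^*$ cleanly to be the main technical step, as everything else is formal manipulation of the recurrence.
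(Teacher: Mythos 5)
Your proof is correct, and there is nothing in the paper to compare it against line by line: Lemma~\ref{ch11} is stated with the citation \cite{pstdc} and no proof is given in this paper, so any comparison must be with the argument in that reference. Your route is, up to taking adjoints, essentially that standard argument: the two intertwining identities you isolate, $UN^*=SN^*$ and $U^{-1}N^*=2N^*P-SN^*$, are exactly the conjugate transposes of the relations $NSU=N$ and $NU=2PN-NS$ on which the proof in \cite{pstdc} rests. Your packaging is a bit slicker in one respect: by computing $NU^{\tau+1}N^*=NU^\tau SN^*$ and $NU^{\tau-1}N^*=2(NU^\tau N^*)P-NU^\tau SN^*$ and adding, you cancel the cross term $NU^\tau SN^*$ and get the three-term recurrence directly on the compressed matrices $NU^\tau N^*$, rather than first expanding $NU^\tau$ as a combination of $N$ and $NS$ with polynomial-in-$P$ coefficients and only compressing at the end; the price is that you must invoke invertibility of $U$ (which is fine, since $U=SC$ with $S^2=C^2=I$ gives $U^{-1}=CS$) and handle the side on which $P$ lands, which you do correctly via the inductive hypothesis $NU^\tau N^*=T_\tau(P)$, a polynomial in $P$ that commutes with $P$. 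All the supporting computations check out: $(NN^*)_{uv}$ sums $1/\deg u$ over the $\deg u$ arcs with head $u$, giving $NN^*=I$ (with $\deg u\geq 1$ as you note); $S$ is a real symmetric involution since $a=b^{-1}$ iff $b=a^{-1}$; the base cases $NU^0N^*=I=T_0(P)$ and $NUN^*=NSN^*=P=T_1(P)$ anchor the induction, which then covers all $\tau\in\Nl\cup\{0\}$.
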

		Note that the discriminant $P$ of a graph is a symmetric matrix. Thus, it has a spectral decomposition. Let $\mu_1, \hdots, \mu_d$ be the distinct eigenvalues of $P$, and the idempotent projections onto the corresponding eigenspaces be $E_1,\hdots,E_d$, respectively. Then the spectral decomposition of $P$ is $P=\sum_{r=1}^{d} \mu_r E_r.$ It is known that $E_r^2=E_r,~ E_rE_s=0,~ E_r^t=E_r$ and $E_1+\cdots + E_d=I$ for $1\leq r,s \leq d$ and $r\neq s$. Because of the preceding properties, if $h(x)$ is a polynomial, then we have 
		\begin{equation}\label{sd}
			h(P)=\sum_{r=1}^{d} h(\mu_r) E_r.
		\end{equation}
		Let $u$ be a vertex of a graph $G$. The \emph{eigenvalue support} of $u$ with respect to $P$, denoted $\Theta_P(u)$, is defined as $\Theta_P(u)=\{\mu_r\in \spec_P(G): E_r \eu \neq 0\}.$ The following theorem provides a necessary condition on the eigenvalues of $P$ for the occurrence of perfect state transfer in a graph.
		\begin{theorem}[\cite{pstdc}]\label{p1}
			Let  $u$ and $v$ be two distinct vertices of a graph $G$, and let $P$ be its discriminant. If perfect state transfer occurs from $u$ to $v$ at time $\tau$, then $T_\tau(\mu)=\pm 1$ for all $\mu \in \Theta_P(u)$.
		\end{theorem}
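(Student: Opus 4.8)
The plan is to transport the perfect state transfer identity from the arc space $\Cl^{\mathcal{A}(G)}$ down to the vertex space $\Cl^{V(G)}$ using the boundary matrix $N$, and then to read off the constraint on $T_\tau$ from the spectral decomposition of $P$. First I would record that $N$ has orthonormal rows, that is, $NN^*=I$ on $\Cl^{V(G)}$. This follows directly from the definition $N_{ua}=\frac{1}{\sqrt{\deg u}}\delta_{u,t(a)}$: the entry $(NN^*)_{uv}=\sum_a N_{ua}\overline{N_{va}}$ is nonzero only when $u=v$, in which case it sums $\frac{1}{\deg u}$ over the $\deg u$ arcs with head $u$, giving $1$.

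The perfect state transfer hypothesis supplies a unimodular $\gamma$ with $U^\tau N^*\eu=\gamma N^*\ev$. Applying $N$ on the left and using $NN^*=I$ on the right-hand side yields $NU^\tau N^*\eu=\gamma\ev$, and by Lemma \ref{ch11} the left-hand side equals $T_\tau(P)\eu$, so $T_\tau(P)\eu=\gamma\ev$. Next I would insert the spectral decomposition. Writing $P=\sum_{r=1}^d \mu_r E_r$ and applying \eqref{sd} to $h=T_\tau$ gives $T_\tau(P)\eu=\sum_{r=1}^d T_\tau(\mu_r)E_r\eu$. Taking squared norms and using the orthogonality relations $E_rE_s=\delta_{rs}E_r$ together with $E_r^*=E_r$, all cross terms vanish, so
$$\sum_{r=1}^d |T_\tau(\mu_r)|^2\,\|E_r\eu\|^2=\|\gamma\ev\|^2=1.$$
Since $E_1+\cdots+E_d=I$, the same orthogonality gives $\sum_{r=1}^d\|E_r\eu\|^2=\|\eu\|^2=1$, and subtracting the two identities produces
$$\sum_{r=1}^d\bigl(1-|T_\tau(\mu_r)|^2\bigr)\,\|E_r\eu\|^2=0.$$

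The key remaining input is that every eigenvalue of $P$ lies in $[-1,1]$, which forces $|T_\tau(\mu_r)|\le 1$ for all $r$. This I would establish from $P=NSN^*$: the shift matrix $S$ is a symmetric permutation matrix (the involution $a\mapsto a^{-1}$), so $\|S\|=1$, while $NN^*=I$ gives $\|N\|=\|N^*\|=1$; hence $\|P\|\le\|N\|\,\|S\|\,\|N^*\|=1$, and as $P$ is real symmetric its eigenvalues are real and lie in $[-1,1]$. Because $T_\tau$ is the Chebyshev polynomial of the first kind, $|T_\tau(x)|\le 1$ on $[-1,1]$, so each summand $\bigl(1-|T_\tau(\mu_r)|^2\bigr)\|E_r\eu\|^2$ is nonnegative; a sum of nonnegative terms that vanishes forces each term to vanish. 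For $\mu_r\in\Theta_P(u)$ we have $E_r\eu\neq 0$, hence $\|E_r\eu\|^2>0$, so $1-|T_\tau(\mu_r)|^2=0$, giving $|T_\tau(\mu_r)|=1$. Finally, since $\mu_r$ is real, $T_\tau(\mu_r)$ is a real number of modulus $1$ and therefore equals $\pm1$, which yields $T_\tau(\mu)=\pm1$ for every $\mu\in\Theta_P(u)$.

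The main obstacle is the step bounding the spectrum of $P$ inside $[-1,1]$, equivalently $|T_\tau(\mu_r)|\le 1$; without it the sum-of-nonnegatives argument collapses. Everything else is a transparent application of $NN^*=I$, Lemma \ref{ch11}, and the orthogonality of the spectral idempotents.
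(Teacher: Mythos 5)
Your proof is correct, and every step checks out: $NN^*=I$ follows from the definition of $N$; the reduction to $T_\tau(P)\eu=\gamma\ev$ via Lemma \ref{ch11} is exactly the paper's own Lemma \ref{pstdef}; the Parseval identity over the spectral idempotents is valid since the $E_r$ are orthogonal real-symmetric projections; and the bound $\spec_P(G)\subseteq[-1,1]$ is correctly justified from $P=NSN^*$ with $S$ a symmetric permutation matrix and $N^*$ an isometry. One caveat on framing: the paper never proves Theorem \ref{p1} itself — it is imported from \cite{pstdc} — so your argument is a genuinely self-contained substitute rather than a variant of an in-paper proof. Two economies are worth pointing out. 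First, the step you flag as the ``main obstacle'' dissolves under a one-line observation: Lemma \ref{ch11} gives $T_\tau(P)=NU^\tau N^*$ with $U^\tau$ unitary, so $\|T_\tau(P)\|\leq\|N\|\,\|U^\tau\|\,\|N^*\|=1$ directly, and hence $|T_\tau(\mu_r)|\leq 1$ for every eigenvalue of $P$, with no need to bound $\spec(P)$ or to invoke the extremal property of Chebyshev polynomials on $[-1,1]$. Second, the route taken in \cite{pstdc}, echoed in the paper's Lemma \ref{pstc}, is purely algebraic: perfect state transfer from $u$ to $v$ at time $\tau$ forces perfect state transfer from $v$ to $u$ at the same time (by Lemma \ref{st11} and the symmetry of $T_\tau(P)$), so $T_\tau(P)\ev=\gamma'\eu$ with $\gamma'\in\{-1,1\}$, whence $T_\tau(P)^2\eu=\gamma\gamma'\eu$; applying $E_r$ yields $T_\tau(\mu_r)^2E_r\eu=\gamma\gamma'E_r\eu$, and since $T_\tau(\mu_r)^2\geq 0$ this gives $T_\tau(\mu_r)^2=1$ on the eigenvalue support with no norm analysis at all. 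Your version buys a reusable spectral fact — $\spec_P(G)\subseteq[-1,1]$, the backdrop against which Theorem \ref{ls} makes sense — at the cost of a slightly longer analytic detour; the algebraic finish is shorter and also delivers $\|E_r\eu\|=\|E_r\ev\|$ as a byproduct.
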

		\section{Graph symmetry and perfect state transfer}
		This section explores the relationship between graph automorphisms and perfect state transfer. Additionally, we examine how isomorphisms preserve the occurrence of perfect state transfer. First, we present an equivalent definition of perfect state transfer and a necessary condition for its occurrence on a graph.
		
		\begin{lema}\label{pstdef}
			Let $G$ be a graph and $P$ be its discriminant. Then perfect state transfer occurs from a vertex $u$ to vertex $v$ on $G$ at time $\tau$ if and only if there exists $\gamma\in \{-1, 1\}$ such that $T_\tau(P)\eu=\gamma\ev$.
		\end{lema}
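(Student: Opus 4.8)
The plan is to prove the equivalence by passing back and forth between the arc space $\Cl^{\mathcal{A}(G)}$, on which $U$ acts, and the vertex space $\Cl^{V(G)}$, on which $P$ and $T_\tau(P)$ act, using the boundary matrix $N$ as the bridge. Two preliminary facts drive everything. First, since $G$ is connected every vertex has positive degree, and a direct computation gives $NN^*=I$ on $\Cl^{V(G)}$; in particular $N^*\eu$ and $N^*\ev$ are unit states and $N$ is a left inverse of $N^*$. Second, $P=NSN^*$ is a real symmetric matrix (both $N$ and $S$ are real, and $S$ is symmetric because $S_{ab}=\delta_{a,b^{-1}}$), so $T_\tau(P)$ is again real symmetric, the Chebyshev polynomial $T_\tau$ having real coefficients. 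I will also use Lemma~\ref{ch11}, which identifies $NU^\tau N^*=T_\tau(P)$, and Lemma~\ref{st11}.

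For the forward direction, suppose perfect state transfer occurs from $u$ to $v$ at time $\tau$, so by definition $U^\tau N^*\eu=\gamma N^*\ev$ for some unimodular $\gamma$. Applying $N$ on the left and using $NN^*=I$ together with Lemma~\ref{ch11} yields $T_\tau(P)\eu=NU^\tau N^*\eu=\gamma\,\ev$. The left-hand side is a real vector, because $T_\tau(P)$ is real and $\eu$ is a real basis vector; hence $\gamma\ev$ is real, forcing $\gamma\in\Rl$. Combined with $|\gamma|=1$ this gives $\gamma\in\{-1,1\}$, as required.

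For the converse, assume $T_\tau(P)\eu=\gamma\ev$ with $\gamma\in\{-1,1\}$. Writing $\Phi=N^*\eu$ and $\Psi=N^*\ev$ (both unit states by the first paragraph), I compute the relevant overlap using the adjoint relation $\ip{Nx}{y}=\ip{x}{N^*y}$ and Lemma~\ref{ch11}:
$$\ip{U^\tau\Phi}{\Psi}=\ip{U^\tau N^*\eu}{N^*\ev}=\ip{NU^\tau N^*\eu}{\ev}=\ip{T_\tau(P)\eu}{\ev}=\ip{\gamma\ev}{\ev}=\gamma.$$
Thus $|\ip{U^\tau\Phi}{\Psi}|=1$, and Lemma~\ref{st11} gives perfect state transfer from $\Phi=N^*\eu$ to $\Psi=N^*\ev$; equivalently, since $U$ is unitary both $U^\tau\Phi$ and $\Psi$ are unit vectors, so equality in Cauchy--Schwarz forces $U^\tau N^*\eu=\gamma N^*\ev$, which is exactly the definition of perfect state transfer from $u$ to $v$.

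The only genuinely delicate point is the converse: because $N$ collapses arc states down to vertex states it is far from injective, so the identity $NU^\tau N^*\eu=\gamma\ev$ does not by itself pin down the full arc vector $U^\tau N^*\eu$. The resolution is to exploit the unitarity of $U$ to keep norms under control and then read off that the overlap with $N^*\ev$ already has modulus one, so the two unit vectors must be parallel. The forward direction is routine once $NN^*=I$ is in hand, the only wrinkle there being the reality argument that upgrades a unimodular $\gamma$ to $\gamma\in\{-1,1\}$.
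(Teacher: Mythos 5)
Your proof is correct and takes essentially the same route as the paper: both arguments reduce perfect state transfer to the overlap $\ip{U^\tau N^*\eu}{N^*\ev}$, translate it into $T_\tau(P)$ via Lemma~\ref{ch11} together with Lemma~\ref{st11}, and use the reality of $T_\tau(P)$ to force $\gamma\in\{-1,1\}$. If anything you are slightly more explicit than the paper, whose chain of equalities only records $|T_\tau(P)_{uv}|=1$ and leaves implicit the passage to the full vector identity $T_\tau(P)\eu=\gamma\ev$; your use of $NN^*=I$ in the forward direction and of the Cauchy--Schwarz equality case in the converse spells that step out.
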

		\begin{proof}
			Perfect state transfer occurs from a vertex $u$ to vertex $v$ in $G$ at time $\tau$ if and only if
			\begin{align}
				1&=|\ip{U^\tau N^*\eu}{N^* \ev}| \nonumber \tag*{(by Lemma \ref{st11})}\\
				&=|\ip{NU^\tau N^*\eu}{\ev}| \nonumber\\
				&=|\ip{T_{\tau}(P)\eu}{\ev}| \nonumber\tag*{(by Lemma \ref{ch11})}\\
				&= |T_{\tau}(P)_{uv}|. \nonumber
			\end{align}
			Since $T_\tau(P)$ is a real matrix, we have $T_\tau(P)_{uv}\in\{-1,1\}$.
		\end{proof}
		Since $P$ is a symmetric matrix, $T_\tau(P)$ is also a symmetric matrix. Therefore if $T_\tau(P)\eu=\gamma\ev$, then $T_\tau(P)\ev=\gamma\eu$. Consequently, if perfect state transfer occurs from vertex $u$ to another vertex $v$ at time $\tau$ in $G$, then perfect state transfer occurs from $v$ to $u$ at the same time $\tau$. Thus, instead of stating that perfect state transfer occurs from vertex $u$ to vertex $v$, we say that perfect state transfer occurs between  $u$ and $v$.
		\begin{lema}\label{pstc}
			Let $P$ be the discriminant of a graph $G$ with spectral decomposition $P=\sum_{r=1}^{d}\mu_r E_r$. If perfect state transfer occurs between vertices $u$ and $v$, then $E_r\eu=\pm E_r\ev$ for each $r$. 
		\end{lema}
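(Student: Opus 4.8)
The plan is to start from the equivalent characterization supplied by Lemma \ref{pstdef}: perfect state transfer between $u$ and $v$ at some time $\tau$ means there exists $\gamma\in\{-1,1\}$ with $T_\tau(P)\eu=\gamma\ev$. The first step is to expand $T_\tau(P)$ through the spectral decomposition of $P$. Applying the functional-calculus identity \eqref{sd} to the polynomial $T_\tau$ gives $T_\tau(P)=\sum_{r=1}^d T_\tau(\mu_r)E_r$, so the defining identity becomes $\sum_{r=1}^d T_\tau(\mu_r)E_r\eu=\gamma\sum_{r=1}^d E_r\ev$.

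Next I would isolate the individual spectral components. Because the $E_r$ are the orthogonal idempotent projections associated to a symmetric matrix, they satisfy $E_sE_r=\delta_{sr}E_r$. Applying $E_s$ to both sides of the expanded identity and using this orthogonality collapses each sum to its $s$-th term, yielding the scalar relation $T_\tau(\mu_s)E_s\eu=\gamma E_s\ev$ for every $s$. This per-eigenvalue relation between the components of $\eu$ and $\ev$ is the engine of the whole argument.

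I would then split according to whether $\mu_s$ lies in the eigenvalue support $\Theta_P(u)$. If $\mu_s\in\Theta_P(u)$, Theorem \ref{p1} guarantees $T_\tau(\mu_s)=\pm1$; combined with $\gamma\in\{-1,1\}$, the relation $T_\tau(\mu_s)E_s\eu=\gamma E_s\ev$ rearranges to $E_s\eu=\pm E_s\ev$, since the quotient of two numbers from $\{-1,1\}$ is again $\pm1$.

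The step requiring genuine care — and what I expect to be the main, if modest, obstacle — is the complementary case $\mu_s\notin\Theta_P(u)$, where Theorem \ref{p1} tells us nothing about $T_\tau(\mu_s)$. Here I would invoke the definition of the support directly: $\mu_s\notin\Theta_P(u)$ means precisely $E_s\eu=0$, so the left-hand side of $T_\tau(\mu_s)E_s\eu=\gamma E_s\ev$ vanishes, forcing $\gamma E_s\ev=0$ and hence $E_s\ev=0$ because $\gamma\neq0$. Thus $E_s\eu=0=E_s\ev$, which trivially satisfies $E_s\eu=\pm E_s\ev$. Combining the two cases establishes $E_r\eu=\pm E_r\ev$ for all $r$, as claimed.
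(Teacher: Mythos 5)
Your proposal is correct and follows essentially the same route as the paper's proof: expand $T_\tau(P)\eu=\gamma\ev$ via \eqref{sd}, extract the per-eigenvalue relations $T_\tau(\mu_r)E_r\eu=\gamma E_r\ev$, apply Theorem \ref{p1} on $\Theta_P(u)$, and handle $\mu_r\notin\Theta_P(u)$ by vanishing of $E_r\eu$. Your treatment of the off-support case is in fact slightly more careful than the paper's, which asserts $E_r\eu=0=E_r\ev$ without spelling out that $E_r\ev=0$ is forced by the relation $T_\tau(\mu_r)E_r\eu=\gamma E_r\ev$ together with $\gamma\neq0$, exactly as you argue.
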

		\begin{proof}
			If perfect state transfer occurs between $u$ and $v$, then there exists $\tau\in \Nl$ and $\gamma\in\{-1,1\}$ such that
			$T_\tau(P)\eu=\gamma\ev.$
			From \eqref{sd}, we have
			$$\sum_{r=1}^{d} T_\tau(\mu_r)E_r\eu=\gamma\ev.$$
			Therefore $$T_\tau(\mu_r)E_r\eu=\gamma E_r\ev~\text{for each $r\in\{1,\hdots,d\}$}.$$
			Hence by Theorem \ref{p1}, $E_r\eu=\pm E_r\ev$ for $\mu_r\in \Theta_P(u)$. For $\mu_r\notin\Theta_P(u)$, $E_r\eu=0=E_r\ev$. Thus, we have the result.
		\end{proof}
		
		Let $\aut(G)$ be the set of all automorphisms of a graph $G$. For any $\sigma\in \aut(G)$, note that $\sigma$ corresponds to a permutation matrix $M_\sigma\in\Cl^{V(G)\times V(G)}$,
		given by $(M_\sigma)_{uv}=\delta_{u,\sigma(v)}$. For $u\in V(G)$, let $\aut(G)_u=\{\sigma\in\aut(G):\sigma(u)=u\}$.

		\begin{lema}\label{per1}
			Let $G$ be a graph with discriminant $P$. Let $M_\sigma$ be the permutation matrix of an automorphism $\sigma$ of $G$. Then $M_\sigma P=PM_\sigma$. 
		\end{lema}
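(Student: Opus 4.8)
The plan is to lift the automorphism $\sigma$ to a permutation of the arc set and show that this lift intertwines $\sigma$ with each of the three matrices $N$, $S$ and $N^*$ that build $P=NSN^*$. Define $\tilde\sigma:\mathcal{A}(G)\to\mathcal{A}(G)$ by $\tilde\sigma(a)=(\sigma(o(a)),\sigma(t(a)))$; this is well defined because $\sigma$ preserves edges, and it is a bijection with inverse induced by $\sigma^{-1}$. Let $M_{\tilde\sigma}$ be the associated permutation matrix on $\mathbb{C}^{\mathcal{A}(G)}$, given by $(M_{\tilde\sigma})_{ab}=\delta_{a,\tilde\sigma(b)}$. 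The whole argument then reduces to three intertwining identities, after which $M_\sigma P=P M_\sigma$ follows by a one-line substitution.

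First I would prove the boundary intertwining $M_\sigma N=N M_{\tilde\sigma}$. Entrywise, both sides are computed from $N_{ua}=\frac{1}{\sqrt{\deg u}}\delta_{u,t(a)}$: the left side produces $\frac{1}{\sqrt{\deg\sigma^{-1}(u)}}\delta_{\sigma^{-1}(u),t(a)}$ while the right side produces $\frac{1}{\sqrt{\deg u}}\delta_{u,t(\tilde\sigma(a))}$. These agree because $\sigma$ preserves degrees (so $\deg\sigma^{-1}(u)=\deg u$) and because $t(\tilde\sigma(a))=\sigma(t(a))$, giving $\delta_{\sigma^{-1}(u),t(a)}=\delta_{u,\sigma(t(a))}$. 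Next I would show that the shift commutes with the lift, $S M_{\tilde\sigma}=M_{\tilde\sigma} S$; this rests on the single observation that $\tilde\sigma$ commutes with arc inversion, i.e. $\tilde\sigma(a^{-1})=(\tilde\sigma(a))^{-1}$, which is immediate from the definition of $\tilde\sigma$. Plugging $S_{ab}=\delta_{a,b^{-1}}$ into both products then yields $\delta_{a,\tilde\sigma(b)^{-1}}$ on each side.

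The last ingredient is $M_{\tilde\sigma} N^*=N^* M_\sigma$. Rather than recompute, I would obtain it by taking the conjugate transpose of $M_\sigma N=N M_{\tilde\sigma}$: since permutation matrices are real and orthogonal, this adjoint relation is equivalent to the boundary intertwining already established. Combining the three identities,
\begin{align*}
M_\sigma P &= M_\sigma N S N^* = N M_{\tilde\sigma} S N^* = N S M_{\tilde\sigma} N^* = N S N^* M_\sigma = P M_\sigma,
\end{align*}
which is the claim.

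I expect no serious obstacle; the content is the correct bookkeeping of the Kronecker deltas, and in particular the careful use of degree-preservation in the $N$ step and of inversion-compatibility in the $S$ step. The one point deserving attention is the $N^*$ factor: one must check that the adjoint of the boundary intertwining yields exactly $M_{\tilde\sigma} N^*=N^* M_\sigma$ (and not the analogous relation for $\sigma^{-1}$), which holds because taking adjoints of $M_\sigma N=N M_{\tilde\sigma}$ returns an identity of the same form. As an alternative, entirely self-contained route, one can bypass the arc lift by evaluating $P=NSN^*$ directly to obtain $P_{uv}=A_{uv}/\sqrt{\deg u\,\deg v}$ and then checking $P_{\sigma(u),\sigma(v)}=P_{u,v}$ from the fact that $\sigma$ preserves both adjacency and degree; the commutation $M_\sigma P=PM_\sigma$ is equivalent to this invariance.
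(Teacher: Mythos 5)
Your proof is correct, but it takes a genuinely different route from the paper. The paper works entirely at the vertex level: it evaluates the entries $(M_\sigma P)_{uv}=P_{\sigma^{-1}(u),v}$ and $(PM_\sigma)_{uv}=P_{u,\sigma(v)}$ by expanding $P=NSN^*$, finds that $P_{wv}=1/\sqrt{\deg w\,\deg v}$ when $wv\in E(G)$ and $0$ otherwise, and concludes by adjacency- and degree-preservation --- in other words, the paper's proof is essentially the ``self-contained alternative'' you sketch in your last sentence, not your main argument. Your main argument instead lifts $\sigma$ to the arc permutation $\tilde\sigma$ on $\mathcal{A}(G)$ and proves the three intertwinings $M_\sigma N=NM_{\tilde\sigma}$, $SM_{\tilde\sigma}=M_{\tilde\sigma}S$, and $M_{\tilde\sigma}N^*=N^*M_\sigma$; all three computations check out, including the degree-preservation step for $N$ and the compatibility $\tilde\sigma(a^{-1})=(\tilde\sigma(a))^{-1}$ for $S$. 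This buys something the paper's entrywise computation does not: from $M_{\tilde\sigma}N^*N=N^*M_\sigma N=N^*NM_{\tilde\sigma}$ and $M_{\tilde\sigma}S=SM_{\tilde\sigma}$ you get that $M_{\tilde\sigma}$ commutes with the full time evolution $U=S(2N^*N-I)$, a stronger structural fact about Grover walks, whereas the paper's route only yields commutation with the discriminant (which is all the lemma asserts, and is shorter to verify). One cosmetic caution on your adjoint step: taking conjugate transposes of $M_\sigma N=NM_{\tilde\sigma}$ literally yields $N^*M_{\sigma^{-1}}=M_{\widetilde{\sigma^{-1}}}N^*$, i.e.\ the relation for $\sigma^{-1}$; you then recover $M_{\tilde\sigma}N^*=N^*M_\sigma$ either by multiplying both sides by $M_{\tilde\sigma}$ and $M_\sigma$, or by applying the adjointed identity to the automorphism $\sigma^{-1}$. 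Your phrasing ``returns an identity of the same form'' glosses this one-line rearrangement, but it is not a gap.
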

		\begin{proof}
			Let $\sigma\in\aut(G)$ and $M_\sigma$ be the corresponding permutation matrix. Let $u,v\in V(G)$. We show that $(M_\sigma P)_{uv}=(PM_\sigma)_{uv}.$ Let $x,w\in V(G)$ be such that $\sigma(w)=u$ and $\sigma(v)=x$. Now, 
			\begin{align*}	
				(M_\sigma P)_{uv}=\sum_{y\in V(G)}(M_\sigma)_{uy}P_{yv}&=P_{wv}\\
				&=\sum_{e,f\in\mathcal{A}(G)}N_{we}S_{ef}N_{vf}\\
				&=\mathop{\sum_{e,f\in\mathcal{A}(G)}}_{t(e)=w, t(f)=v} \frac{1}{\sqrt{\deg w}}\frac{1}{\sqrt{\deg v}}\delta_{e,f^{-1}}\\
				&=\left\{ \begin{array}{ll}
					\frac{1}{\sqrt{\deg w \deg v}} &\mbox{ if }
					wv\in E(G) \\ 
					0 &\textnormal{ otherwise.}
				\end{array}\right. 
			\end{align*} 
			Similarly,
			\[
			(PM_\sigma)_{uv}=\sum_{y\in V(G)}P_{uy}(M_\sigma)_{yv}	=P_{ux}=\left\{ \begin{array}{ll}
				\frac{1}{\sqrt{\deg u \deg x}} &\mbox{ if }
				ux\in E(G) \\ 
				0 &\textnormal{ otherwise.}
			\end{array}\right. 
			\]
			Note that $wv\in E(G)$ if and only if $ux\in E(G)$. Also, $\deg w =\deg u$ and $\deg v=\deg x$. Hence $(M_\sigma P)_{uv}=(PM_\sigma)_{uv}$ for any vertices $u$ and $v$. Therefore  $M_\sigma P=PM_\sigma$.
		\end{proof}
		\begin{lema}
			Let $u$ and $v$ be two vertices of a graph $G$, and let $\sigma$ be an automorphism of $G$. If perfect state transfer occurs between $u$ and $v$, then perfect state transfer occurs between  $\sigma(u)$ and $\sigma(v)$.
		\end{lema}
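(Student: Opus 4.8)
The plan is to reduce everything to the commutation relation already established in Lemma \ref{per1} together with the characterization of perfect state transfer in Lemma \ref{pstdef}. The starting observation is that the permutation matrix $M_\sigma$ acts on standard basis vectors by relabeling: since $(M_\sigma)_{uv}=\delta_{u,\sigma(v)}$, we get $M_\sigma\textbf{e}_w=\textbf{e}_{\sigma(w)}$ for every vertex $w$. This is the only piece of bookkeeping required, and it follows directly from the definition of $M_\sigma$.

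Next I would upgrade Lemma \ref{per1} from $M_\sigma P=PM_\sigma$ to the statement that $M_\sigma$ commutes with every polynomial in $P$; in particular $M_\sigma T_\tau(P)=T_\tau(P)M_\sigma$ for the Chebyshev polynomial $T_\tau$. This is immediate by induction on the degree (or simply because commuting with $P$ forces commuting with any power of $P$ and hence with any linear combination of powers).

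With these two facts in hand the argument is a short computation. Suppose perfect state transfer occurs between $u$ and $v$ at some time $\tau$. By Lemma \ref{pstdef} there exists $\gamma\in\{-1,1\}$ with $T_\tau(P)\eu=\gamma\ev$. Applying $M_\sigma$ to both sides and using the commutation relation on the left and the relabeling action on the right gives
\[
T_\tau(P)\,\textbf{e}_{\sigma(u)}=M_\sigma T_\tau(P)\eu=\gamma\,M_\sigma\ev=\gamma\,\textbf{e}_{\sigma(v)}.
\]
Invoking Lemma \ref{pstdef} once more in the reverse direction, this identity says exactly that perfect state transfer occurs between $\sigma(u)$ and $\sigma(v)$ at the same time $\tau$, which is the desired conclusion.

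I do not anticipate a genuine obstacle here: the content is entirely packaged in Lemma \ref{per1} and Lemma \ref{pstdef}, and the proof amounts to conjugating the perfect-state-transfer equation by the symmetry $M_\sigma$. The only point deserving a line of care is verifying $M_\sigma\textbf{e}_w=\textbf{e}_{\sigma(w)}$ from the stated convention $(M_\sigma)_{uv}=\delta_{u,\sigma(v)}$, so that the relabeling sends the pair $(u,v)$ to $(\sigma(u),\sigma(v))$ rather than to its inverse image.
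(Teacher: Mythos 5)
Your proof is correct and follows essentially the same route as the paper: both use Lemma \ref{pstdef} to encode perfect state transfer as $T_\tau(P)\eu=\gamma\ev$, extend the commutation $M_\sigma P=PM_\sigma$ of Lemma \ref{per1} to $T_\tau(P)$, and apply $M_\sigma$ to relabel the basis vectors via $M_\sigma\textbf{e}_w=\textbf{e}_{\sigma(w)}$. Your explicit verification of the relabeling convention is a careful touch the paper states without proof, but the argument is the same.
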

		\begin{proof}
			If perfect state transfer occurs between $u$ and $v$ at time $\tau$, then by Lemma \ref{pstdef}, $T_\tau(P)\eu=\gamma\ev$ for some $\gamma\in\{-1,1\}$. Let $M_\sigma$ be the permutation matrix corresponding to $\sigma\in\aut(G)$. Note that $M_\sigma$ also commutes with $T_\tau(P)$. Therefore $ T_\tau(P)M_\sigma\eu=M_\sigma T_\tau(P)\eu=\gamma M_\sigma\ev.$  Since $M_\sigma\eu=\textbf{e}_{\sigma(u)}$ and $M_\sigma\ev=\textbf{e}_{\sigma(v)}$, it follows that $T_\tau(P)\textbf{e}_{\sigma(u)}=\gamma\textbf{e}_{\sigma(v)}$ for some $\gamma\in\{-1, 1\}$. This completes the proof.
		\end{proof}
		\begin{lema}\label{autg}
			Let $u$ and $v$ be two vertices of a graph $G$. If perfect state transfer occurs between $u$ and $v$, then $\aut(G)_u=\aut(G)_v$.
		\end{lema}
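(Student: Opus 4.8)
The plan is to combine the characterization of perfect state transfer in Lemma \ref{pstdef} with the commutation relation in Lemma \ref{per1}. Suppose perfect state transfer occurs between $u$ and $v$ at some time $\tau$. By Lemma \ref{pstdef} there exists $\gamma\in\{-1,1\}$ with $T_\tau(P)\eu=\gamma\ev$, and by the symmetry remark following that lemma the same relation holds with the roles of $u$ and $v$ swapped, so $T_\tau(P)\ev=\gamma\eu$.

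The key observation I would use is that the permutation matrix $M_\sigma$ of any automorphism commutes with $P$ by Lemma \ref{per1}, and hence commutes with the polynomial $T_\tau(P)$. Fixing $\sigma\in\aut(G)_u$, so that $M_\sigma\eu=\textbf{e}_{\sigma(u)}=\eu$, I would apply $M_\sigma$ to the identity $T_\tau(P)\eu=\gamma\ev$ and use this commutativity:
\[
\gamma\textbf{e}_{\sigma(v)}=\gamma M_\sigma\ev=M_\sigma T_\tau(P)\eu=T_\tau(P)M_\sigma\eu=T_\tau(P)\eu=\gamma\ev,
\]
which forces $\textbf{e}_{\sigma(v)}=\ev$, that is, $\sigma(v)=v$. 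This establishes $\aut(G)_u\subseteq\aut(G)_v$.

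For the reverse inclusion I would run the identical argument starting from $T_\tau(P)\ev=\gamma\eu$ together with a fixed $\sigma\in\aut(G)_v$, concluding $\sigma(u)=u$ and hence $\aut(G)_v\subseteq\aut(G)_u$. Combining the two inclusions yields $\aut(G)_u=\aut(G)_v$. I do not anticipate any genuine obstacle: the entire content is the commutation $M_\sigma T_\tau(P)=T_\tau(P)M_\sigma$, which is immediate from Lemma \ref{per1}, and the only point needing a little care is recording the correct action $M_\sigma\eu=\textbf{e}_{\sigma(u)}$ of a permutation matrix on standard basis vectors, which follows directly from the definition $(M_\sigma)_{uv}=\delta_{u,\sigma(v)}$.
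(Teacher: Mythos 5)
Your proof is correct and follows essentially the same route as the paper: both proofs use Lemma \ref{pstdef} to write $T_\tau(P)\eu=\gamma\ev$, exploit the commutation $M_\sigma T_\tau(P)=T_\tau(P)M_\sigma$ from Lemma \ref{per1} to deduce $M_\sigma\ev=\ev$ for $\sigma\in\aut(G)_u$, and obtain the reverse inclusion from the symmetry of perfect state transfer. The only cosmetic difference is that you spell out the symmetric identity $T_\tau(P)\ev=\gamma\eu$ and rerun the argument, while the paper simply cites that perfect state transfer from $u$ to $v$ implies it from $v$ to $u$ at the same time.
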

		\begin{proof}
			Let $\sigma\in\aut(G)_u$. Then $M_\sigma\eu=\eu$. If perfect state transfer occurs between $u$ and $v$ at time $\tau$, then by Lemma \ref{pstdef}, there exists $\gamma\in\{-1,1\}$ such that
			$$\gamma\ev=T_\tau(P)\eu=T_\tau(P)M_\sigma\eu=M_\sigma T_\tau(P)\eu=\gamma M_\sigma\ev.$$
			Therefore $\sigma\in\aut(G)_v$. The converse follows from the fact that, if perfect state transfer occurs from $u$ to $v$ at time $\tau$, then perfect state transfer also occurs from $v$ to $u$ with the same time $\tau$.
		\end{proof}
		Lemma~\ref{autg}  also appears in \cite{circ}. However, our proof is significantly simpler than the one in \cite{circ}.
		\begin{lema}
			Let $f$ be an isomorphism from a graph $G_1$ to an isomorphic graph  $G_2$. Then perfect state transfer occurs between two vertices $u$ and $v$ in $G_1$ if and only if perfect state transfer occurs between $f(u)$ and $f(v)$ in $G_2$. 
		\end{lema}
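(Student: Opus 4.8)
The plan is to reduce the statement to the discriminant picture of Lemma~\ref{pstdef} and then to transport that picture along $f$, in the same way that Lemma~\ref{per1} transports the discriminant along an automorphism. Let $P_1$ and $P_2$ denote the discriminants of $G_1$ and $G_2$, and define a $0/1$ matrix $M_f\in\Cl^{V(G_2)\times V(G_1)}$ by $(M_f)_{uw}=\delta_{u,f(w)}$, so that $M_f\e_w=\e_{f(w)}$ for every $w\in V(G_1)$. Since $f$ is a bijection, $M_f$ is (after relabelling) a permutation matrix whose inverse is the matrix of $f^{-1}$.

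The key step is the intertwining relation $M_fP_1=P_2M_f$. I would establish it entrywise, exactly as in the proof of Lemma~\ref{per1}: for $u\in V(G_2)$ and $v\in V(G_1)$ one computes $(M_fP_1)_{uv}=(P_1)_{f^{-1}(u),v}$ and $(P_2M_f)_{uv}=(P_2)_{u,f(v)}$, so that, writing $w=f^{-1}(u)$, the claim reduces to $(P_1)_{wv}=(P_2)_{f(w),f(v)}$. Recalling that the discriminant has entries $P_{wv}=1/\sqrt{\deg w\,\deg v}$ when $wv$ is an edge and $0$ otherwise (the computation carried out in Lemma~\ref{per1}), and that $f$ preserves both adjacency and vertex degrees, this equality is immediate. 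This is the one place where the hypothesis that $f$ is a graph isomorphism is genuinely used, and it is the crux of the argument; the remaining steps are formal. The main obstacle, such as it is, is merely the careful bookkeeping of the two distinct vertex sets $V(G_1)$ and $V(G_2)$ in this computation.

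From $M_fP_1=P_2M_f$ it follows by induction on powers that $M_fP_1^{\,k}=P_2^{\,k}M_f$ for all $k$, and hence $M_f\,T_\tau(P_1)=T_\tau(P_2)\,M_f$ for the Chebyshev polynomial $T_\tau$ and every $\tau$. Now suppose perfect state transfer occurs between $u$ and $v$ in $G_1$ at time $\tau$. By Lemma~\ref{pstdef} there is $\gamma\in\{-1,1\}$ with $T_\tau(P_1)\e_u=\gamma\e_v$; applying $M_f$ and using the intertwining relation yields
$$T_\tau(P_2)\e_{f(u)}=M_f\,T_\tau(P_1)\e_u=\gamma\,M_f\e_v=\gamma\,\e_{f(v)}.$$
By Lemma~\ref{pstdef} applied to $G_2$, this is exactly the statement that perfect state transfer occurs between $f(u)$ and $f(v)$ in $G_2$. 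The converse is obtained by running the identical argument with the isomorphism $f^{-1}\colon G_2\to G_1$, which swaps the roles of the two graphs, so the equivalence follows.
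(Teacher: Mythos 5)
Your proposal is correct and takes essentially the same approach as the paper: both establish the intertwining relation $M_fP_1=P_2M_f$ by the entrywise computation of Lemma~\ref{per1}, transport the characterization of Lemma~\ref{pstdef} through the Chebyshev polynomial $T_\tau$, and obtain the converse by applying the argument to $f^{-1}$. The only cosmetic difference is that the paper rewrites the relation as $P_1=M_f^tP_2M_f$ and substitutes into $T_\tau(P_1)$, whereas you commute $M_f$ past $T_\tau(P_1)$ directly; these are equivalent.
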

		\begin{proof}
			Let $P_1$ and $P_2$ be the discriminants of the graphs $G_1$ and $G_2$, respectively.	Let $M_f$ be the permutation matrix associated with an isomorphism $f$ from $G_1$ to $G_2$. By a similar argument as in the proof of Lemma \ref{per1}, we find that $M_f P_1=P_2 M_f$, that is, $P_1=M_f^tP_2M_f$.
			
			Suppose perfect state transfer occurs between vertices $u$ and $v$ in $G_1$ at time $\tau$. By Lemma \ref{pstdef}, $T_\tau(P_1)\eu=\gamma\ev$ for some $\gamma\in\{-1, 1\}$. Substituting $P_1=M_f^tP_2M_f$, we have $T_\tau(M_f^tP_2M_f)\eu=\gamma\ev$. Thus, $M_f^tT_\tau(P_2)M_f\eu=\gamma\ev$. This implies $T_\tau(P_2)\textbf{e}_{f(u)}=\gamma\textbf{e}_{f(v)}$. Therefore, perfect state transfer occurs between
			$f(u)$ and $f(v)$ in $G_2$.

			The converse follows directly from the fact that if $f$ is an isomorphism from $G_1$ to $G_2$, then $f^{-1}$ is an isomorphism from $G_2$ to $G_1$.
		\end{proof}
		\section{Unitary Cayley graphs}
		Let $L:=(\ell_{ij})$ and $M$ be two matrices of size $p\times q$ and $r\times s$, respectively. The \emph{Kronecker product} of $L$ and $M$, denoted $L\otimes M$, is a block matrix of size $pr\times qs$, defined as
		$$L\otimes M=\begin{bmatrix}
			\ell_{11}M & \cdots & \ell_{1q}M\\
			\vdots & \ddots & \vdots\\
			\ell_{p1}M&\cdots&\ell_{pq}M
		\end{bmatrix}. $$
		For square matrices $L$ and $M$, we write $L\congp M$ to mean that $L$ and $M$ are permutation similar, that is, there exists a permutation matrix $Q$ such that $L=Q^tMQ$. 
		\begin{lema}[\cite{kronecker}]\label{kp}
			Let $L$ and $M$ be square matrices. Then the following are true.
			\begin{enumerate}[label=(\roman*)]
				\item\label{kp1} $L\otimes M\congp M\otimes L$.
				\item\label{kp2} If the spectral decomposition of $L$ and $M$ are $L=\sum_{j=1}^{p}\ld_jE_j$ and $M=\sum_{j=1}^{q}\mu_jF_j$, respectively, then the spectral decomposition of $L\otimes M$ is $L\otimes M=\sum_{i=1}^{p}\sum_{j=1}^{q}\ld_i\mu_j(E_i\otimes F_j)$.
			\end{enumerate}	
		\end{lema}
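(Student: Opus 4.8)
The plan is to handle the two parts separately, each resting on one structural identity for Kronecker products: for part~\ref{kp1} the ``perfect shuffle'' permutation that interchanges the two tensor factors, and for part~\ref{kp2} the mixed-product rule $(A\otimes B)(C\otimes D)=(AC)\otimes(BD)$ together with bilinearity of $\otimes$.

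For part~\ref{kp1}, suppose $L$ is $p\times p$ and $M$ is $q\times q$, so that $L\otimes M$ is indexed by pairs $(i,j)$ with $1\le i\le p$, $1\le j\le q$, while $M\otimes L$ is indexed by pairs $(j,i)$. First I would introduce the permutation matrix $Q$ of order $pq$ realizing the swap $e_i\otimes e_j\mapsto e_j\otimes e_i$ on standard basis vectors, that is, the map $\Cl^p\otimes\Cl^q\to\Cl^q\otimes\Cl^p$ sending each basis tensor to its re-indexed counterpart. A one-line check on a decomposable tensor, using $(L\otimes M)(x\otimes y)=(Lx)\otimes(My)$, gives $Q(L\otimes M)(x\otimes y)=(My)\otimes(Lx)=(M\otimes L)Q(x\otimes y)$, and hence $Q(L\otimes M)=(M\otimes L)Q$ as matrices. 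Since $Q$ is a permutation matrix we have $Q^{-1}=Q^t$, so $L\otimes M=Q^t(M\otimes L)Q$, which is exactly $L\otimes M\congp M\otimes L$.

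For part~\ref{kp2}, expanding $L\otimes M=\left(\sum_{i=1}^{p}\ld_iE_i\right)\otimes\left(\sum_{j=1}^{q}\mu_jF_j\right)$ by bilinearity immediately yields $L\otimes M=\sum_{i,j}\ld_i\mu_j(E_i\otimes F_j)$, so it remains only to verify that the family $\{E_i\otimes F_j\}$ is a complete system of orthogonal self-adjoint idempotents. Using the mixed-product rule I would check: idempotency from $(E_i\otimes F_j)^2=E_i^2\otimes F_j^2=E_i\otimes F_j$; orthogonality from $(E_i\otimes F_j)(E_k\otimes F_l)=(E_iE_k)\otimes(F_jF_l)=0$ whenever $(i,j)\ne(k,l)$, since then $E_iE_k=0$ or $F_jF_l=0$; self-adjointness from $(E_i\otimes F_j)^t=E_i^t\otimes F_j^t=E_i\otimes F_j$, using that the $E_i,F_j$ are symmetric; and completeness from $\sum_{i,j}E_i\otimes F_j=\left(\sum_iE_i\right)\otimes\left(\sum_jF_j\right)=I\otimes I=I$.

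The only genuinely delicate point, and the step I would flag, is that the scalars $\ld_i\mu_j$ need not be pairwise distinct, so strictly speaking the displayed expression is a refinement of the spectral decomposition rather than its canonical form: the canonical projection onto the eigenspace for a distinct eigenvalue $\nu$ of $L\otimes M$ is the grouped sum $\sum_{\ld_i\mu_j=\nu}E_i\otimes F_j$. Since a sum of mutually orthogonal self-adjoint idempotents is again a self-adjoint idempotent, collecting the $E_i\otimes F_j$ by equal products recovers the honest spectral decomposition, with $\ld_i\mu_j$ the eigenvalue attached to the range of $E_i\otimes F_j$. Beyond this bookkeeping the argument is entirely routine, so I do not anticipate a serious obstacle.
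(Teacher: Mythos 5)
The paper gives no proof of this lemma at all: it is quoted verbatim from \cite{kronecker}, so there is no in-paper argument to compare against. Your proof is correct and is essentially the standard one; in fact the ``perfect shuffle'' matrix $Q$ you construct for part \ref{kp1} is exactly the vec-permutation (commutation) matrix that is the subject of the cited Henderson--Searle paper, so your route coincides with that of the source. Your verification of part \ref{kp2} via the mixed-product rule is also sound, and the point you flag --- that the products $\ld_i\mu_j$ need not be distinct, so the displayed sum is a refinement of the canonical spectral decomposition and the true eigenprojection for a repeated value $\nu$ is the grouped sum $\sum_{\ld_i\mu_j=\nu}E_i\otimes F_j$ --- is a genuine (and worthwhile) sharpening of the lemma as stated: the paper itself silently performs this grouping in the proof of Lemma \ref{perm}, where the eigenprojection of $P(G_R)$ for the eigenvalue $0$ is taken to be $(F_1+\cdots+F_d)\otimes E_2=I\otimes E_2$. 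One minor caveat: as stated the lemma speaks only of ``square matrices,'' whereas your orthogonality and self-adjointness checks implicitly assume the $E_i$, $F_j$ are the orthogonal projections of symmetric (or at least normal) matrices; this is harmless here, since the paper only ever applies the lemma to symmetric discriminant and adjacency matrices, but it is worth stating the hypothesis explicitly.
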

		The \emph{tensor product} of two graphs $G$ and $H$ is the graph $G\otimes H$ with vertex set $V(G)\times V(H)$, and two vertices $(a,b)$ and  $(c,d)$ are adjacent if $a$ is adjacent to $c$ in $G$, and $b$ is adjacent to $d$ in $H$. One can check that $A(G\otimes H)=A(G)\otimes A(H)$, and $G\otimes H$ is isomorphic to $H\otimes G$. Note that two graphs $G$ and $H$ are isomorphic if and only if $A(G)\congp A(H)$. Thus, if $G$ and $H$ are two regular graphs, they are isomorphic if and only if $P(G)\congp P(H)$.
		
		Let $R$ be a ring as in Assumption \ref{as}. It is clear that $R^\times =R_1^\times \times \cdots\times R_s^\times$. Therefore, we have $G_R=G_{R_1}\otimes \cdots\otimes G_{R_s}$. The following result gives the adjacency eigenvalues of $G_R$.
		\begin{theorem}[\cite{ucr}\label{evuc}]
			Let $R$ be a ring as in Assumption \ref{as}. Then, the eigenvalues of the adjacency matrix of $G_R$ are 
			\begin{enumerate}[label=(\roman*)]
				\item $(-1)^{|C|}\frac{|R^\times|}{\prod_{j\in C}(|R_j^\times|/m_j)}$, with multiplicity $\prod_{j\in C}(|R_j^\times|/m_j)$, where $C$ runs over all subsets of $\{1,\hdots,s\}$, and  	
				\item $0$, with multiplicity $|R|-\prod_{j=1}^{s}\left(1+\frac{|R_j^\times|}{m_j} \right)$. 
			\end{enumerate}
		\end{theorem}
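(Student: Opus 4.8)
The plan is to compute the spectrum one local factor at a time and then assemble the global spectrum through the tensor decomposition $G_R=G_{R_1}\otimes\cdots\otimes G_{R_s}$ recorded just before the statement. The two ingredients are that $A(G_R)=A(G_{R_1})\otimes\cdots\otimes A(G_{R_s})$ and the Kronecker eigenvalue rule of Lemma~\ref{kp}\ref{kp2}, which says the eigenvalues of a tensor product are the products of eigenvalues of the factors, with multiplicities multiplying. So it suffices to find the spectrum of $G_{R_i}$ for a single local ring and then bookkeep the products.

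First I would determine the spectrum of $G_{R_i}$. Since $R_i$ is local, its non-units are exactly the elements of $M_i$, so the connection set is $R_i^\times=R_i\setminus M_i$, which is symmetric because $-1\in R_i^\times$. Viewing $G_{R_i}$ as a Cayley graph on the abelian group $(R_i,+)$, its adjacency eigenvalues are the character sums $\lambda_\chi=\sum_{u\in R_i^\times}\chi(u)$ as $\chi$ ranges over the additive characters of $R_i$. Writing $\sum_{u\in R_i^\times}\chi(u)=\sum_{u\in R_i}\chi(u)-\sum_{u\in M_i}\chi(u)$ and applying orthogonality of characters on the group $R_i$ and on its subgroup $M_i$, each sum equals the size of its index set when $\chi$ (respectively $\chi|_{M_i}$) is trivial and is $0$ otherwise. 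This yields exactly three eigenvalues: the degree $|R_i^\times|=|R_i|-m_i$ from the trivial character (multiplicity $1$); the value $-m_i$ from the $\tfrac{|R_i|}{m_i}-1=\tfrac{|R_i^\times|}{m_i}$ non-trivial characters that are trivial on $M_i$, namely the lifts of the non-trivial characters of the residue field $R_i/M_i$; and $0$ from the remaining $|R_i|-\tfrac{|R_i|}{m_i}$ characters, which are non-trivial on $M_i$.

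Next I would pass to the product. By Lemma~\ref{kp}\ref{kp2}, every eigenvalue of $A(G_R)$ is a product $\mu_1\cdots\mu_s$ with $\mu_j\in\{|R_j^\times|,-m_j,0\}$, its multiplicity being the product of the factor multiplicities. Any product containing a $0$ vanishes, so the non-zero eigenvalues arise by choosing, in each coordinate, either $|R_j^\times|$ (multiplicity $1$) or $-m_j$ (multiplicity $|R_j^\times|/m_j$). Letting $C\subseteq\{1,\dots,s\}$ record the coordinates where $-m_j$ is chosen, the product equals $(-1)^{|C|}\prod_{j\in C}m_j\prod_{j\notin C}|R_j^\times|$, which I would rewrite as $(-1)^{|C|}\frac{|R^\times|}{\prod_{j\in C}(|R_j^\times|/m_j)}$ using $|R^\times|=\prod_{j}|R_j^\times|$; its multiplicity is $\prod_{j\in C}(|R_j^\times|/m_j)$ because the top eigenvalue is simple. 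This is exactly item~(i).

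Finally, for the zero eigenvalue I would count by complementation. The total number of eigenvalues with multiplicity is $|R|$, and the number of non-zero ones is $\sum_{C\subseteq\{1,\dots,s\}}\prod_{j\in C}(|R_j^\times|/m_j)=\prod_{j=1}^{s}\bigl(1+\tfrac{|R_j^\times|}{m_j}\bigr)$ by expanding the product over subsets, so the multiplicity of $0$ is $|R|-\prod_{j=1}^{s}\bigl(1+\tfrac{|R_j^\times|}{m_j}\bigr)$, which is item~(ii). The only step requiring genuine care is the single-factor character computation, and within it the identification of the characters trivial on $M_i$ with the $\tfrac{|R_i|}{m_i}$ characters of the residue field $R_i/M_i$; once that count is pinned down, everything else is routine combinatorial bookkeeping.
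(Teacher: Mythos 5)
Your proposal is correct, and every step checks out: the local character-sum computation correctly yields the spectrum $\{|R_i^\times|,\,-m_i,\,0\}$ of $G_{R_i}$ with multiplicities $1$, $|R_i^\times|/m_i$ and $|R_i|-|R_i|/m_i$ (the lift of characters of $R_i/M_i$ to characters of $R_i$ trivial on $M_i$ is the standard bijection, and the orthogonality argument on the subgroup $M_i$ is sound), and the Kronecker assembly via Lemma~\ref{kp}~\ref{kp2} together with the complementation count for the $0$-eigenvalue gives exactly items (i) and (ii). One point of context: the paper itself offers no proof of Theorem~\ref{evuc}, importing it from \cite{ucr}, so there is no in-paper argument to match; your proof follows the same global strategy as that source (decompose $G_R=G_{R_1}\otimes\cdots\otimes G_{R_s}$, compute each local factor, multiply spectra), differing only in how the local spectrum is obtained. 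The cited source, like the paper's own Lemma~\ref{perm}, uses the structural fact that $G_{R_i}$ is complete multipartite with $A(G_{R_i})=J_{m_i}\otimes A(K_{|R_i|/m_i})$, reading off the eigenvalues from the known spectra of $J_{m_i}$ and $K_{|R_i|/m_i}$, whereas you use additive characters; the two computations are equivalent, with the character route generalizing more readily to other Cayley graphs and the multipartite route staying closer to the matrix identities the paper reuses later. One cosmetic caveat: your multiplicity bookkeeping treats the eigenvalues as a family indexed by $C$, which is the correct reading of the statement, since distinct subsets can produce equal values (e.g., $\lambda_\emptyset=\lambda_{\{j,k\}}=|R^\times|$ when two residue fields are $\Zl_2$, as exploited in Lemma~\ref{sring}), so one should not claim the listed values are pairwise distinct.
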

		In reference to notations of the previous lemma, let $\ld_C=(-1)^{|C|}\frac{|R^\times|}{\prod_{j\in C}(|R_j^\times|/m_j)}$, where $C$ runs over all subsets of $\{1,\hdots,s\}$. In particular, $\ld_\emptyset=|R^\times|$ and $\ld_{\{1\}}=-m_1|R^\times|/|R_1^\times|$. Note that $G_R$ is a regular graph of degree $|R^\times|$. Thus by Theorem \ref{discp}, the eigenvalues of the discriminant matrix $P$ of $G_R$ are 
		
		$$	\left\{ \begin{array}{rl}
			\mu_C &\textnormal{ with multiplicity   $\prod_{j\in C}(|R_j^\times|/m_j)$, ~and}\\ 
			0 &\textnormal{ with multiplicity $|R|-\prod_{j=1}^{s}\left(1+\frac{|R_j^\times|}{m_j} \right)$,}
		\end{array}\right.$$
		where $\mu_C=\frac{\ld_C}{|R^\times|}$ and $C$ runs over all subsets of $\{1,\hdots,s\}$.
		
		\begin{theorem}\label{ucrp}
			Let $R$ be a ring as in Assumption \ref{as}. Then the unitary Cayley graph $G_R$ is periodic if and only if either $R_1/M_1\cong \Zl_2$ or $R_1/M_1\cong \Zl_3$, and $R_i/M_i\cong \Zl_2$ for $i\in\{2,\hdots,s\}$.
		\end{theorem}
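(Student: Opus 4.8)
The plan is to reduce periodicity to a spectral condition via the integral-graph criterion and then translate that condition into constraints on the residue fields $R_i/M_i$. First I would check that $G_R$ is an integral graph: by Theorem \ref{evuc} its adjacency eigenvalues are the numbers $\ld_C=(-1)^{|C|}|R^\times|/\prod_{j\in C}(|R_j^\times|/m_j)$ together with $0$, and since $\prod_{j\in C}(|R_j^\times|/m_j)$ divides $\prod_{i=1}^s(|R_i^\times|/m_i)$, which in turn divides $|R^\times|=\prod_i|R_i^\times|=\prod_i m_i(|R_i^\times|/m_i)$, each $\ld_C$ is an integer. As $G_R$ is integral and $|R^\times|$-regular, the remark following Theorem \ref{ls} says that $G_R$ is periodic if and only if $\spec_P(G_R)\subseteq\{\pm1,\pm\frac12,0\}$.

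Next I would rewrite the nonzero discriminant eigenvalues in terms of residue fields. In each local ring $R_i$ the non-units are exactly the elements of $M_i$, so $|R_i^\times|=|R_i|-m_i$ and hence $|R_i^\times|/m_i=|R_i|/m_i-1=q_i-1$, where $q_i:=|R_i|/m_i=|R_i/M_i|$ is the order of the $i$th residue field (a prime power $\geq 2$). Therefore $\mu_C=(-1)^{|C|}/\prod_{j\in C}(q_j-1)$, so $|\mu_C|=1/\prod_{j\in C}(q_j-1)$ is a strictly positive rational, and the membership $\mu_C\in\{\pm1,\pm\frac12,0\}$ is equivalent to $\prod_{j\in C}(q_j-1)\in\{1,2\}$. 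Thus periodicity becomes the single combinatorial requirement that $\prod_{j\in C}(q_j-1)\in\{1,2\}$ for every subset $C\subseteq\{1,\dots,s\}$.

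The core of the argument is then a short analysis of when all these sub-products stay in $\{1,2\}$. Taking $C=\{j\}$ forces $q_j-1\in\{1,2\}$, i.e. $q_j\in\{2,3\}$ for every $j$; taking $C=\{i,j\}$ with $q_i=q_j=3$ gives product $4\notin\{1,2\}$, so at most one index may have $q_j=3$. Conversely, if every $q_j\in\{2,3\}$ with at most one equal to $3$, then at most one factor $q_j-1$ equals $2$ and the rest equal $1$, so every sub-product is $1$ or $2$, giving $|\mu_C|\in\{1,\frac12\}$ and $\spec_P(G_R)\subseteq\{\pm1,\pm\frac12,0\}$. Finally I would use the ordering $|R_1|/m_1\geq\cdots\geq|R_s|/m_s$, i.e. $q_1\geq\cdots\geq q_s$, from Assumption \ref{as} to pin the possible exceptional index to $j=1$: the condition reads exactly $q_1\in\{2,3\}$ and $q_i=2$ for $i\geq2$, that is $R_1/M_1\cong\Zl_2$ or $R_1/M_1\cong\Zl_3$, and $R_i/M_i\cong\Zl_2$ for $i\in\{2,\dots,s\}$.

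I do not expect a serious obstacle here; the two points needing care are verifying integrality so that the clean criterion $\spec_P\subseteq\{\pm1,\pm\frac12,0\}$ is available, and invoking the ordering of the $q_i$ to place the single exceptional residue field at $R_1$ rather than leaving the statement symmetric in the factors.
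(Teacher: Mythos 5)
Your proposal is correct and takes essentially the same route as the paper: both convert Theorem \ref{evuc} via Lemma \ref{discp} into the discriminant eigenvalues $\mu_C=(-1)^{|C|}/\prod_{j\in C}(q_j-1)$, apply the rational part of the criterion in Theorem \ref{ls}, and run the identical subset analysis (singletons force $q_j\in\{2,3\}$, a pair of $3$'s produces the excluded value $\frac14$, and the ordering in Assumption \ref{as} places the one possible $q_j=3$ at $j=1$). Your explicit integrality verification is a harmless extra step; the paper instead just observes that every $\mu_C$ is rational and invokes the clause $\spec_P^{\Ql}(G_R)\subseteq\left\{\pm1,\pm\frac{1}{2},0\right\}$ directly.
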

		\begin{proof}
			First, assume that $G_R$ is a periodic graph.  Since $M_i$ is the maximal ideal of the local ring $R_i$, $R_i/M_i$ is a field, and so  $|R_i|/m_i\geq 2$ for $1\leq i\leq s$. Suppose $|R_1|/m_1\geq 4$. Since $R_1$ is a local ring with maximal ideal $M_1$, one can find that $R_1^\times = R_1\setminus M_1$. Therefore $|R_1^\times| /m_1=|R_1|/m_1-1\geq 3$. Note that $|R_1|/m_1$ is an integer, and so $-\frac{1}{3}\leq -m_1/|R_1^\times|< 0$. Consider the subset $\{1\}$ of $\{1,\hdots, s\}$. Then 
			$$\mu_{\{1\}}=-\frac{m_1}{|R_1^\times|}	\notin \left\{\pm 1,\pm\frac{1}{2},0\right\}.$$ 
			Then  Theorem~\ref{ls} gives that $G_R$ is not periodic, a contradiction. Thus  $|R_1|/m_1\leq3$, and so $|R_i|/m_i\leq3$ for $i\in\{2,\hdots, s\}$. Now, assume that $s\geq 2$.  Suppose, if possible, $|R_2|/m_2=3$. Then we must have $|R_1|/m_1= 3$. Consider the subset $\{1,2\}$  of $\{1,2,\hdots, s\}$. We have $\mu_{\{1,2\}}=\frac{1}{4}$, contradiction that $G_R$ is periodic. Hence $2\leq |R_1|/m_1\leq 3$, and $|R_i|/m_i=2$ for $i\in \{2,\hdots, s\}$. Since $R_i/M_i$ is a field for each $i\in\{1,\hdots,s\}$, we are done.

			Conversely assume that  $R_1/M_1\cong \Zl_2$ or $R_1/M_1\cong \Zl_3$, and $R_i/M_i\cong \Zl_2$ for $i\in\{2,\hdots,s\}$.  Then by Theorem~\ref{evuc}, the discriminant eigenvalues of $G_R$ are either $0$ or $\mu_C$, where $\mu_C=\frac{(-1)^{|C|}}{\prod_{j\in C}(|R_j^\times|/m_j)}$. Note that $|R_j^\times|/m_j=|R_j|/m_j-1$ for each $j$. Therefore if $R_i/M_i\cong \Zl_2$ for $i\in\{1,\hdots,s\}$, then 		
			$$\frac{(-1)^{|C|}}{\prod_{j\in C}(|R_j^\times|/m_j)}=\left\{ \begin{array}{rl}
				1 &\textnormal{if $|C|$ is even  }\\ 
				-1 &\textnormal{if $|C|$ is odd.}
			\end{array}\right.$$
			Similarly, if $R_1/M_1\cong \Zl_3$ and $R_i/M_i\cong \Zl_2$ for $i\in\{2,\hdots,s\}$, then 
			$$\frac{(-1)^{|C|}}{\prod_{j\in C}(|R_j^\times|/m_j)}=\left\{ \begin{array}{rl}
				1 &\textnormal{if $1\notin C$ and $|C|$ is even  }\\ 
				-1 &\textnormal{if $1\notin C$ and $|C|$ is odd}\\
				\frac{1}{2}&\textnormal{if $1\in C$ and $|C|$ is even }\\
				-\frac{1}{2}&\textnormal{if $1\in C$ and $|C|$ is odd.}
			\end{array}\right.$$
			Thus $\spec_P(G_R)\subseteq\{\pm1,\pm \frac{1}{2},0\}$. Therefore, the graph $G_R$ is periodic.
		\end{proof}
		We now prove several lemmas to characterize perfect state transfer on $G_R$.
		\begin{lema}\label{perm}
			Let $R$ be a ring as in Assumption \ref{as}. If $G_R$ exhibits perfect state transfer, then $m\leq2$.
		\end{lema}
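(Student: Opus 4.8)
The plan is to derive $m\le 2$ directly from the behaviour of $\eu$ under the spectral projection of the discriminant $P$ onto its kernel, using Lemma~\ref{pstc}. If $m=1$ there is nothing to prove, so I would assume $m\ge 2$. Then, by the eigenvalue list recorded after Theorem~\ref{evuc}, the value $0$ is a genuine eigenvalue of $P$; write $E_0$ for the orthogonal projection onto $\ker P$.

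First I would make $E_0$ explicit. Since $G_R=G_{R_1}\otimes\cdots\otimes G_{R_s}$, the eigenvectors of $P=\frac{1}{|R^\times|}A(G_R)$ are the additive characters $\chi=\chi_1\otimes\cdots\otimes\chi_s$ of $R=R_1\times\cdots\times R_s$; splitting $R_i^\times=R_i\setminus M_i$ in the character sum shows that the eigenvalue attached to $\chi$ vanishes exactly when some $\chi_i$ is nontrivial on $M_i$, i.e. exactly when $\chi$ is nontrivial on the additive subgroup $M:=M_1\times\cdots\times M_s$ of order $m$. Hence $I-E_0$ is the orthogonal projection onto the span of the characters trivial on $M$, which is precisely the operator that averages a function over each coset of $M$. (The same description follows from Lemma~\ref{kp} applied to the Kronecker factorisation of $P$.) Consequently $(I-E_0)\eu$ equals $\frac1m$ at each of the $m$ vertices of the coset $u+M$ and $0$ elsewhere, so that $E_0\eu=\eu-(I-E_0)\eu$ satisfies
\[
(E_0\eu)_w=\begin{cases}\delta_{u,w}-\dfrac1m & \text{if } w\in u+M,\\ 0 & \text{otherwise.}\end{cases}
\]

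Now suppose perfect state transfer occurs between distinct vertices $u$ and $v$. By Lemma~\ref{pstc} we have $E_0\eu=\pm E_0\ev$, and I would read off both sides at the single coordinate $w=u$. Since $u\in u+M$ always, the left-hand side equals $1-\frac1m$, which is nonzero because $m\ge 2$. Using $u\neq v$ and $u\in v+M\iff v-u\in M$, the right-hand side equals $\mp\frac1m$ when $v-u\in M$ and equals $0$ when $v-u\notin M$. The case $v-u\notin M$ forces $1-\frac1m=0$, i.e. $m=1$, contradicting $m\ge 2$; hence $v-u\in M$ and $1-\frac1m=\mp\frac1m$. The upper sign gives the impossible $1=0$, so the lower sign must hold, yielding $m=2$. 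Together with the trivial case $m=1$ this establishes $m\le 2$.

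I expect the only delicate step to be the explicit identification of $I-E_0$ with coset-averaging over $M$, since this is what turns the abstract orthogonality condition of Lemma~\ref{pstc} into the concrete arithmetic identity $1-\frac1m=\pm\frac1m$; the remainder is a one-coordinate evaluation. Notably, this argument uses only the eigenstructure of $G_R$ from Theorem~\ref{evuc} and does not require periodicity.
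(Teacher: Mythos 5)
Your proof is correct, and it hinges on the same pivot as the paper's: apply Lemma~\ref{pstc} to the spectral projection $E_0$ belonging to the eigenvalue $0$ of $P$, after recognizing that $I-E_0$ is the operator averaging over the cosets of $M:=M_1\times\cdots\times M_s$. Where you genuinely differ is in how you reach that projector and how you extract the contradiction. The paper never mentions characters: it observes that each $G_{R_i}$ is complete multipartite with parts the cosets of $M_i$, so $A(G_{R_i})=J_{m_i}\otimes A(K_{|R_i|/m_i})$, and then uses Lemma~\ref{kp} to write $P(G_R)\congp P(H)\otimes\frac{1}{m}J_m$ with $H=K_{|R_1|/m_1}\otimes\cdots\otimes K_{|R_s|/m_s}$; since $P(H)$ has no zero eigenvalue, the kernel projector is $I\otimes\left(I_m-\frac{1}{m}J_m\right)$, and the paper concludes by writing $\eu=\ex\otimes\ey$, $\ev=\ez\otimes\ew$ and ruling out $m\geq 3$ by a case analysis on whether $\ex=\ez$. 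Your character-sum derivation (the eigenvalue attached to $\chi$ vanishes iff $\chi$ is nontrivial on $M$, which I verified: the $i$-th factor of the eigenvalue is $|R_i^\times|$, $-m_i$, or $0$ according as $\chi_i$ is trivial, trivial only on $M_i$, or nontrivial on $M_i$) arrives at exactly the same operator --- the paper's $J_m$ blocks are your cosets $u+M$ --- but is self-contained, and your endgame, evaluating $E_0\eu=\pm E_0\ev$ at the single coordinate $u$, replaces the paper's tensor bookkeeping with the one-line identity $1-\frac{1}{m}\in\left\{0,\pm\frac{1}{m}\right\}$. You were also right to flag the existence of the zero eigenvalue: its multiplicity is $(m-1)\prod_j(|R_j|/m_j)$, so $E_0$ exists exactly when $m\geq 2$, the only case needing argument. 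As a small bonus, your version records more than the paper's: when $m=2$, perfect state transfer can only pair vertices in the same coset of $M$, and only with sign $-1$.
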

		\begin{proof}
			Since each $R_i$ is a finite local ring, $R_i^\times=R_i\setminus M_i$ for $i\in \{1,\hdots,s\}$. Thus, two vertices $u$ and $v$ in $G_{R_i}$ are adjacent if and only if $u-v\notin M_i$ for $i\in \{1,\hdots,s\}$. Therefore, the graph $G_{R_i}$ is a complete multipartite graph, where the partite sets correspond to the cosets of $M_i$ in $R_i$ for $i\in \{1,\hdots,s\}$. Hence 
			$$ A(G_{R_i})= J_{m_i} \otimes A(K_{\frac{|R_i|}{m_i}}),$$
			where $i\in \{1,\hdots,s\}$ and $J_{n}$ denotes the all all-ones matrix of size $n$.
			Note that $P(G_R)=\frac{1}{|R^\times|}A(G_R)$ and $P(G_{R_i})=\frac{1}{|R_i^\times|}A(G_{R_i})$ for $i\in\{1,\hdots,s\}$. Since $G_R= G_{R_1}\otimes \cdots \otimes G_{R_s}$, we have
			\begin{align*}
				P(G_R)&= P(G_{R_1})\otimes\cdots\otimes P(G_{R_s}) \\
				&= \bigotimes_{i=1}^s \left( \frac{1}{m_i}J_{m_i}\otimes P({K_{\frac{|R_i|}{m_i}}})\right) \\ 
				&\congp   \bigotimes_{i=1}^n P({K_{\frac{|R_i|}{m_i}}}) \otimes\bigotimes_{i=1}^s \frac{1}{m_i}J_{m_i} \tag*{(by \ref{kp1} of Lemma \ref{kp})}\\
				&=  P(H)\otimes \frac{1}{m}J_m ,
			\end{align*}
			where $H$ is the tensor product $K_{\frac{|R_1|}{m_1}}\otimes \cdots \otimes K_{\frac{|R_s|}{m_s}}$.
			
			Let perfect state transfer occur between distinct vertices $u$ and $v$ at time $\tau$ in $G_R$, and suppose that $m\geq 3$. The spectral decomposition of $\frac{1}{m}J_m$ is 
			$$\frac{1}{m}J_m=1\cdot E_1+0\cdot E_2, $$
			where $E_1=\frac{1}{m}J_m$ and $E_2=I_m-\frac{1}{m}J_m$. Let the spectral decomposition of $P(H)$ be $\sum_{j=1}^{d}\theta_j F_j$. Then by Lemma \ref{kp} \ref{kp2}, the eigenprojection of $P(G_R)$ corresponding to the eigenvalue $0$ is $(F_1+\cdots+F_d)\otimes E_2$, which is equal to $I\otimes E_2$. Let $\ex, \ey,\ez$ and $\ew$ be such that $\eu=\ex\otimes\ey$ and $\ev=\ez\otimes\ew$. By Lemma \ref{pstc}, there exists $\gamma\in\{-1,1\}$ such that 
			\[	(I\otimes E_2)\eu=\gamma(I\otimes E_2)\ev. \]
			This implies
			\begin{equation}\label{co1}
				\ex\otimes E_2\ey=\ez\otimes\gamma E_2\ew.
			\end{equation}	
			If $\ex\neq\ez$, then \eqref{co1} is not possible. If $\ex=\ez$, then \eqref{co1} gives $E_2\ey=\gamma E_2\ew$. Note that $u$ and $v$ are distinct vertices, so we must have $\ey\neq\ew$. As $m\geq 3$ and $E_2=I_m-\frac{1}{m}J_m$, we find that $E_2\ey\neq \gamma E_2\ew$, a contradiction. Thus $m\leq2$.
		\end{proof}
		A ring $R$ is said to be an \emph{$S$-ring} if every element of $R$ can be written as a finite sum of elements of units in $R$. See \cite{sring} for more information about $S$-rings. Note that  a $k$-regular graph is connected if and only if $k$ is an eigenvalue of the adjacency matrix of the graph with multiplicity one. We prove the following result about $S$-rings using spectral properties of unitary Cayley graphs.
		\begin{lema}\label{sring}
			Let $R$ be a ring as in Assumption \ref{as}. Then $R$ is an $S$-ring if and only if $R_j/M_j\cong \Zl_2$ for at most one $j\in\{1,\hdots,s\}$.
		\end{lema}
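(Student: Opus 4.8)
The plan is to translate the $S$-ring condition into a connectivity statement about $G_R$ and then read off the answer from the spectral data in Theorem~\ref{evuc}. First I would observe that in the Cayley graph $G_R=\cay(R,R^\times)$ the connected component containing $0$ is precisely the additive subgroup of $R$ generated by the connection set $R^\times$. Since $-1\in R^\times$, the additive inverse of each unit is again a unit, so this subgroup equals exactly the set of all finite sums of units of $R$. Hence $R$ is an $S$-ring if and only if this subgroup is all of $R$, and because $G_R$ is a Cayley graph, hence vertex-transitive, this happens if and only if $G_R$ is connected.

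Next I would invoke the connectivity criterion recalled before the statement: since $G_R$ is $|R^\times|$-regular, it is connected if and only if the eigenvalue $|R^\times|$ of its adjacency matrix has multiplicity one. Thus the whole lemma reduces to computing the multiplicity of the eigenvalue $|R^\times|=\ld_\emptyset$ coming from Theorem~\ref{evuc}.

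For this I would determine exactly which subsets $C\subseteq\{1,\dots,s\}$ satisfy $\ld_C=|R^\times|$. Writing $f_j=|R_j|/m_j=|R_j/M_j|$ for the order of the $j$-th residue field, we have $|R_j^\times|/m_j=f_j-1\geq 1$, so $\ld_C=(-1)^{|C|}|R^\times|/\prod_{j\in C}(f_j-1)$. Since each factor $f_j-1$ is a positive integer and $|R^\times|>0$, the equality $\ld_C=|R^\times|$ forces $|C|$ even and $\prod_{j\in C}(f_j-1)=1$, that is, $f_j=2$ (equivalently $R_j/M_j\cong\Zl_2$) for every $j\in C$. Each such $C$ contributes multiplicity $\prod_{j\in C}(f_j-1)=1$, so the total multiplicity of $|R^\times|$ equals the number of even-sized subsets of $T:=\{j: R_j/M_j\cong\Zl_2\}$.

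Finally I would count: the number of even-sized subsets of a set of size $t=|T|$ is $1$ when $t=0$ and $2^{t-1}$ when $t\geq 1$. Therefore the multiplicity of $|R^\times|$ equals $1$ precisely when $t\leq 1$, and exceeds $1$ when $t\geq 2$. Combining with the first two steps yields that $R$ is an $S$-ring if and only if $R_j/M_j\cong\Zl_2$ for at most one index $j$. I expect the only delicate point to be the multiplicity computation, namely being careful that distinct subsets $C$ may yield the same numerical eigenvalue $|R^\times|$ and that their multiplicities must be summed; the reduction to connectivity and the counting of even subsets are routine.
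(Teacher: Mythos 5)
Your proof is correct and takes essentially the same route as the paper: both translate the $S$-ring property into connectivity of $G_R$, apply the criterion that a $|R^\times|$-regular graph is connected if and only if $|R^\times|$ is an adjacency eigenvalue of multiplicity one, and read the multiplicity off Theorem~\ref{evuc}. If anything, your version is more explicit than the paper's, since you determine all subsets $C$ with $\ld_C=|R^\times|$ (namely even-sized subsets of $T=\{j: R_j/M_j\cong\Zl_2\}$, giving total multiplicity $2^{t-1}$ for $t=|T|\geq 1$), whereas the paper exhibits only the pair $\ld_\emptyset=\ld_{\{j,k\}}=|R^\times|$ for the forward direction and asserts the converse multiplicity-one claim without carrying out this count.
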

		\begin{proof}
			Consider the unitary Cayley graph $G_R$. It is clear that $G_R$ is connected if and only if $R$ is an $S$-ring. Therefore, $R$ is an $S$-ring if and only if $|R^\times|$ is an eigenvalue of the adjacency matrix of $G_R$ with multiplicity $1$.

			Let $s=1$. Then by Theorem~\ref{evuc}, $|R^\times|$ is an eigenvalue of $A(G_R)$ with multiplicity $1$. Further, $R_j/M_j\cong \Zl_2$ for at most one $j\in\{1\}$. Thus, the result is proved in this case.

			Now suppose that $s\geq 2$. Let $R$ be an $S$-ring, and suppose that there exist distinct $R_j$ and $R_k$ such that $R_j/M_j\cong \Zl_2$ and $R_k/M_k\cong \Zl_2$. Then by Theorem~\ref{evuc}, we have $\ld_{\emptyset}=|R^\times|$ and $\ld_{\{j,k\}}=|R^\times|$. This leads to the contradiction that $|R^\times|$ is an eigenvalue of $A(G_R)$ with multiplicity at least $2$. 
			
				Conversely, assume that $R_j/M_j \cong \mathbb{Z}_2$ for at most one 
			$j \in \{1,\ldots,s\}$. Thus ${|R_j|}/{m_j}=2$  for at most one 
			$j \in \{1,\ldots,s\}$. By Theorem~\ref{evuc}, we have 
			$\lambda_{\emptyset} = |R^\times|$.
			Let $\emptyset \neq C \subset \{1,\ldots,s\}$. 
			If $\lambda_C = |R^\times|$, then it follows that $|C|$ is even and 
			$|R_j^\times|/m_j = 1$ for each  $j \in C$. This implies that ${|R_j|}/{m_j}=2$ for each $j\in C$. This contradicts our initial assumption. Hence $|R^\times|$ is an eigenvalue of $A({G}_R)$ with multiplicity one.
		\end{proof}
		The following result completely determines the cyclic group $\Zl_n$ for which $G_{\Zl_n}$ exhibits perfect state transfer.
		\begin{theorem}[\cite{bhakta1}]\label{bh1}
			The unitary Cayley graph $G_{\Zl_n}$ exhibits perfect state transfer if and only if $n\in\{2,4,6,12\}$. 
		\end{theorem}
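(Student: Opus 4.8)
The plan is to combine the necessary conditions already recorded above with an explicit sufficiency construction for the admissible values of $n$. Write $n=p_1^{a_1}\cdots p_k^{a_k}$, so that $\Zl_n\cong\Zl_{p_1^{a_1}}\times\cdots\times\Zl_{p_k^{a_k}}$ realizes the local decomposition of Assumption \ref{as}: each factor $\Zl_{p_i^{a_i}}$ is local with maximal ideal $p_i\Zl_{p_i^{a_i}}$, so $|R_i|/m_i=p_i$ and $m=n/\operatorname{rad}(n)$. Since $G_{\Zl_n}$ is a Cayley graph it is vertex-transitive, so by Theorem \ref{thmper} perfect state transfer forces periodicity. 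Substituting $|R_i|/m_i=p_i$ into Theorem \ref{ucrp}, and using that the $p_i$ are distinct and (after reordering) decreasing, periodicity holds precisely when the set of primes dividing $n$ is $\{2\}$, $\{3\}$, or $\{3,2\}$, i.e.\ $n=2^a3^b$. Lemma \ref{perm} adds $m=n/\operatorname{rad}(n)\le 2$, forcing $b\le 1$ and $a\le 2$. Hence the only candidates are $n\in\{2,3,4,6,12\}$, and it remains to eliminate $n=3$ and to verify perfect state transfer for $n\in\{2,4,6,12\}$.

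To eliminate $n=3$, observe that $G_{\Zl_3}=K_3$ and $P=\tfrac12A(K_3)$ has the simple eigenvalue $1$ (eigenvector $\mathbf 1$) and the eigenvalue $-\tfrac12$ with eigenprojection $E=I-\tfrac13J$. For distinct vertices $u,v$ we have $E\eu=\eu-\tfrac13\mathbf 1$ and $E\ev=\ev-\tfrac13\mathbf 1$, and neither $E\eu=E\ev$ nor $E\eu=-E\ev$ can hold. Lemma \ref{pstc} then rules out perfect state transfer on $G_{\Zl_3}$; note in particular that $n=3$ passes both necessary tests (periodicity and $m\le 2$) yet fails, so this elimination is essential.

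For $n\in\{2,4,6,12\}$ I would show perfect state transfer occurs between the antipodal vertices $0$ and $n/2$. Each $G_{\Zl_n}$ is circulant, so $P$ is a circulant matrix whose eigenvalue $\mu_j$ at frequency $j$ is the normalized Ramanujan sum supplied by Theorem \ref{evuc}, and whose eigenprojections are the Fourier projections; the resulting discriminant spectra lie in $\{\pm1,\pm\tfrac12,0\}$, as periodicity requires. Consequently $T_\tau(P)_{0,k}=\tfrac1n\sum_{j=0}^{n-1}T_\tau(\mu_j)\,\omega_n^{jk}$, where $\omega_n=e^{2\pi\iu/n}$, so it is enough to choose $\tau$ for which the numbers $T_\tau(\mu_j)$ realize a sign pattern that Fourier-inverts to a single translate. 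Using $T_\tau(\cos\theta)=\cos(\tau\theta)$ one checks that $\tau=1,2,3,6$ for $n=2,4,6,12$ respectively yield $T_\tau(\mu_j)=(-1)^j=\omega_n^{(n/2)j}$ for every $j$; then $T_\tau(P)_{0,k}=\tfrac1n\sum_j\omega_n^{(n/2+k)j}=\delta_{k,n/2}$, so $T_\tau(P)\,\textbf{e}_0=\textbf{e}_{n/2}$ and Lemma \ref{pstdef} gives perfect state transfer.

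The main obstacle is not the necessary-condition reduction, which is essentially bookkeeping on top of Theorems \ref{thmper} and \ref{ucrp} and Lemma \ref{perm}, but the fact that these conditions are not sufficient: one must separately discard $n=3$ and then produce the explicit times $\tau$ for the four surviving cases. Among these, $n=2,4,6$ reduce to the single edge and the cycles $C_4,C_6$, where antipodal perfect state transfer is classical; the genuinely new computation is $n=12$, where $G_{\Zl_{12}}\cong C_4\otimes K_3$ is a $4$-regular graph and the Chebyshev values $T_6(\mu_j)$ must be checked across the full spectrum $\{\pm1,\pm\tfrac12,0\}$, including verifying $T_6(0)=-1$ on the large eigenvalue-$0$ eigenspace so that the sign pattern matches $(-1)^j$ exactly.
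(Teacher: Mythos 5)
Your proposal is correct, but there is nothing in this paper to compare it against line by line: Theorem \ref{bh1} is stated here as an imported result from \cite{bhakta1}, with no proof given, and it is used as a black box in the proof of Theorem \ref{ucrpst}. What you have written is a self-contained reconstruction, and I verified its computational core: for $n=12$ the discriminant eigenvalues at the Fourier frequencies $j=0,\hdots,11$ are $1,0,\frac12,0,-\frac12,0,-1,0,-\frac12,0,\frac12,0$ (normalized Ramanujan sums $c_{12}(j)/4$), and $T_6(\pm1)=T_6(\pm\frac12)=1$, $T_6(0)=-1$, so indeed $T_6(\mu_j)=(-1)^j$ and $T_6(P)\mathbf{e}_0=\mathbf{e}_6$; the cases $\tau=1,2,3$ for $n=2,4,6$ check the same way, and your elimination of $n=3$ via Lemma \ref{pstc} is sound, since $E=I-\frac13J$ gives $E\eu\neq\pm E\ev$ for distinct $u,v$. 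Your necessary-condition phase (Theorem \ref{thmper}, then Theorem \ref{ucrp} forcing $n=2^a3^b$, then Lemma \ref{perm} forcing $n/\mathrm{rad}(n)\leq 2$) runs exactly parallel to the paper's proof of the ring-theoretic generalization Theorem \ref{ucrpst}, which does the same bookkeeping over local factors and likewise ends with a candidate list containing the spurious entry $\Zl_3$; but where that proof then quotes Theorem \ref{bh1} together with the isomorphisms $G_{\Zl_4}\cong G_{\mathscr{G}(2)}$ and $G_{\Zl_{12}}\cong G_{\Zl_3\times\mathscr{G}(2)}$ to finish, you discharge both the elimination of $n=3$ and the sufficiency directly, via the circulant inversion $T_\tau(P)_{0,k}=\frac1n\sum_{j}T_\tau(\mu_j)\,\omega_n^{jk}$ with sign pattern $(-1)^j$. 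This buys a non-circular argument (essential, since quoting Theorem \ref{ucrpst} here would beg the question) and explicit antipodal transfer times, at the cost of case-by-case spectral computation that does not generalize beyond circulants. One small attribution point: Theorem \ref{evuc} lists the eigenvalues of $G_R$ as a multiset without frequency labels, so the identification of the eigenvalue at frequency $j$ with the Ramanujan sum $c_n(j)$, which your Fourier step needs, is the standard circulant diagonalization of $\cay(\Zl_n,\Zl_n^\times)$ rather than a consequence of that theorem; you should cite it as such.
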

		We now extend this result (Theorem \ref{bh1}) on the occurrence of perfect state transfer on the unitary Cayley graph over $\Zl_n$ to any finite commutative ring $R$.
		\begin{theorem}\label{ucrpst}
			Let $R$ be a ring as in Assumption \ref{as}. The unitary Cayley graph $G_R$ exhibits perfect state transfer if and only if $R$ is one of the following rings: $\Zl_2$, $\Zl_4$, $\mathbb{F}_2[x]/(x^2)$, $\Zl_6$,  $\Zl_{12}$, or $\Zl_3 \times \mathbb{F}_2[x]/(x^2)$.
		\end{theorem}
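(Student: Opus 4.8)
The plan is to combine the structural reductions already established for $G_R$ with the known base case over $\Zl_n$, and then handle the genuinely new possibilities that arise from non-cyclic local rings of order $p^2$. I would proceed as follows. First, since $G_R$ is vertex-transitive (it is a Cayley graph), Theorem \ref{thmper} tells us that perfect state transfer forces periodicity; hence by Theorem \ref{ucrp} the quotient fields must satisfy $R_1/M_1\cong\Zl_2$ or $\Zl_3$, and $R_i/M_i\cong\Zl_2$ for $i\in\{2,\dots,s\}$. Second, Lemma \ref{perm} forces $m=m_1\cdots m_s\leq 2$, which means every $M_i$ is trivial except possibly one, of size $2$. Combining these two constraints sharply limits the admissible local factors: each $R_i$ is either a field $\Zl_2$ or $\Zl_3$ (when $m_i=1$), or a local ring of order $p^2$ with $m_i=2$, forcing $p=2$, residue field $\Zl_2$, and $M_i$ of size $2$. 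Scanning Theorem \ref{clr}, the local rings of order $4$ with unity and maximal ideal of size $2$ whose residue field is $\Zl_2$ are exactly $\Zl_4=\mathscr{A}(2)$ and $\mathscr{G}(2)$.

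Third, I would assemble the candidate rings. Because $m\leq 2$, at most one factor can have $m_i=2$, so $R$ is a product of copies of $\Zl_2$ and at most one $\Zl_3$, with at most one factor replaced by $\Zl_4$ or $\mathscr{G}(2)$. Here the crucial extra constraint is the $S$-ring condition of Lemma \ref{sring}: for perfect state transfer the graph must in particular be connected on the relevant component, so $R_j/M_j\cong\Zl_2$ for at most one $j$. This collapses the number of $\Zl_2$-type factors to one, leaving a very short finite list of candidate rings, namely those built from a single $\Zl_2$-type factor (one of $\Zl_2,\Zl_4,\mathscr{G}(2)$) together with at most one $\Zl_3$ factor. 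Using $A(G_R)=A(G_{R_1})\otimes\cdots\otimes A(G_{R_s})$ and the tensor decomposition $P(G_R)\congp P(H)\otimes\frac{1}{m}J_m$ from the proof of Lemma \ref{perm}, I would reduce each candidate to its ``core'' tensor factor $H$, since the $\frac1m J_m$ part contributes nothing new. The key observation is that $G_{\mathscr{G}(2)}$ and $G_{\Zl_4}$ have permutation-similar discriminants (both equal $\frac{1}{2}(J_2\otimes P(K_2))$-type blocks with residue field $\Zl_2$ and $m=2$), so by the isomorphism-invariance lemma (the final lemma of Section 3) perfect state transfer on $G_{\Zl_4}$ transfers to $G_{\mathscr{G}(2)}$, and likewise $\Zl_3\times\mathscr{G}(2)$ inherits it from $\Zl_{12}\cong\Zl_3\times\Zl_4$.

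Finally, I would invoke the base case Theorem \ref{bh1}: among cyclic $\Zl_n$, perfect state transfer occurs exactly for $n\in\{2,4,6,12\}$, i.e.\ for $\Zl_2,\Zl_4,\Zl_6,\Zl_{12}$. Matching these against the candidate list and applying the $\mathscr{G}(2)\leftrightarrow\Zl_4$ permutation-similarity yields precisely the six rings $\Zl_2,\Zl_4,\mathscr{G}(2),\Zl_6,\Zl_{12},\Zl_3\times\mathscr{G}(2)$. The main obstacle I anticipate is the last identification step: one must verify carefully that $G_{\mathscr{G}(2)}\cong G_{\Zl_4}$ as graphs (equivalently $P(G_{\mathscr{G}(2)})\congp P(G_{\Zl_4})$), since $\mathscr{G}(2)$ and $\Zl_4$ are non-isomorphic rings yet must yield isomorphic unitary Cayley graphs; this hinges on both having residue field $\Zl_2$ and maximal ideal of order $2$, so that each $G_{R_i}$ is the complete multipartite graph $J_2\otimes K_2$ regardless of the ring structure. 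Establishing this graph isomorphism, and then transporting perfect state transfer through it via the isomorphism lemma, is the delicate part; the remaining work is bookkeeping over the short finite candidate list.
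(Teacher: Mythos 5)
Your proposal follows essentially the same route as the paper's proof: vertex-transitivity plus Theorems~\ref{thmper} and~\ref{ucrp} to pin down the residue fields, Lemma~\ref{perm} to force $m\leq 2$, Lemma~\ref{sring} to reduce to connected $G_R$, the classification in Theorem~\ref{clr} to identify $\Zl_4$ and $\mathscr{G}(2)$ as the only order-$4$ local rings with unity, the graph isomorphisms $G_{\mathscr{G}(2)}\cong G_{\Zl_4}$ and $G_{\Zl_3\times\mathscr{G}(2)}\cong G_{\Zl_{12}}$ transported through the isomorphism lemma of Section 3, and finally the base case Theorem~\ref{bh1}. The only deviations are minor: the paper eliminates the putative order-$6$ local ring by the explicit classification of rings of order $pq$, whereas you appeal to finite local rings having prime-power order (a valid substitute, though it deserves a one-line justification), and your candidate list as phrased omits $R=\Zl_3$, which survives all the structural reductions ($m=1$, connected, residue field $\Zl_3$) and is eliminated only at the last step by Theorem~\ref{bh1} since $3\notin\{2,4,6,12\}$ --- a presentational slip rather than a gap, as your final matching step handles it automatically.
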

		
		\begin{proof}
			It is known that a Cayley graph is a vertex-transitive graph. Therefore by Theorem~\ref{thmper} and Theorem~\ref{ucrp}, we conclude that $R_1/M_1\cong \Zl_2$ or $R_1/M_1\cong \Zl_3$, and $R_i/M_i\cong \Zl_2$ for $i\in\{2,\hdots,s\}$. Note that perfect state transfer cannot occur between vertices of different components. Thus, without loss of generality, we consider the graph $G_R$ to be connected. Hence by Lemma \ref{sring}, we have
			\begin{enumerate}[label=(\roman*)]
				\item $R=R_1$, where $R_1/M_1\cong \Zl_2$ or $R_1/M_1\cong \Zl_3$; or
				\item $R=R_1\times R_2$, where $R_1/M_1\cong \Zl_3$ and $R_2/M_2\cong \Zl_2$.
			\end{enumerate}	
			\begin{figure}[h!]
				\centering
				\begin{subfigure}{.5\textwidth}
					\centering

					\begin{tikzpicture}[scale=.9] 
						
						\node[circle, draw, fill=black, inner sep=2pt] (Na) at ({360/4*0}:2) {};
						\node[circle, draw, fill=black, inner sep=2pt] (Nb) at ({360/4*1}:2) {};
						\node[circle, draw, fill=black, inner sep=2pt] (Nc) at ({360/4*2}:2) {};
						\node[circle, draw, fill=black, inner sep=2pt] (Nd) at ({360/4*3}:2) {};

						\node[right] at (Na) {$1$};
						\node[below, xshift=.2pt, yshift=15pt] at (Nb) {$0$}; 
						\node[left] at (Nc) {$3$};
						\node[below, xshift=.2pt, yshift=-2pt] at (Nd) {$2$}; 
						
						\node[below, xshift=.2pt, yshift=-25pt] at (Nd) {$\Zl_4^\times=\{1,3\}$}; 
						Draw cycle edges
						\draw (Na) -- (Nb);
						\draw (Nb) -- (Nc);
						\draw (Nc) -- (Nd);
						\draw (Nd) -- (Na);
						
						
					\end{tikzpicture}
					\caption{$G_{\mathbb{Z}_{4}}$}\label{fig:sub1}
				\end{subfigure}%
				\begin{subfigure}{.5\textwidth}
					\centering

					\begin{tikzpicture}[scale=1] 
						
						\node[circle, draw, fill=black, inner sep=2pt] (Na) at ({360/4*0}:2) {};
						\node[circle, draw, fill=black, inner sep=2pt] (Nb) at ({360/4*1}:2) {};
						\node[circle, draw, fill=black, inner sep=2pt] (Nc) at ({360/4*2}:2) {};
						\node[circle, draw, fill=black, inner sep=2pt] (Nd) at ({360/4*3}:2) {};

						\node[right] at (Na) {$1$};
						\node[below, xshift=.2pt, yshift=15pt] at (Nb) {$0$}; 
						\node[left, xshift=-2pt, yshift=0pt] at (Nc) {$1+x$};
						\node[below, xshift=.2pt, yshift=-2pt] at (Nd) {$x$}; 
						\node[below, xshift=.2pt, yshift=-25pt] at (Nd) {$	\left(\frac{\mathbb{F}_2[x]}{(x^2)}\right)^\times=\{1,1+x\}$};
						
						Draw cycle edges
						\draw (Na) -- (Nb);
						\draw (Nb) -- (Nc);
						\draw (Nc) -- (Nd);
						\draw (Nd) -- (Na);

					\end{tikzpicture}			\caption{$G_{\mathbb{F}_2[x]/(x^2)}$ }
					\label{fig:sub2}
				\end{subfigure}
				\caption{The graphs $G_{\Zl_4}$ and $G_{\mathbb{F}_2[x]/(x^2)}$}
				\label{fig1}
			\end{figure}
			By Lemma \ref{perm}, $m\leq 2$. If $m=1$ then the only possible choices of $R$ are $\Zl_2$, $\Zl_3$ and $\Zl_6$. Now consider the case $m=2$.	
			\begin{figure}[h!]
				\centering
				\begin{subfigure}{.5\textwidth}
					\centering

					\begin{tikzpicture}[scale=1.4] 
						
						\node[circle, draw, fill=black, inner sep=2pt] (Na) at ({360/12*0}:2) {};
						\node[circle, draw, fill=black, inner sep=2pt] (Nb) at ({360/12*1}:2) {};
						\node[circle, draw, fill=black, inner sep=2pt] (Nc) at ({360/12*2}:2) {};
						\node[circle, draw, fill=black, inner sep=2pt] (Nd) at ({360/12*3}:2) {};
						\node[circle, draw, fill=black, inner sep=2pt] (Ne) at ({360/12*4}:2) {};
						\node[circle, draw, fill=black, inner sep=2pt] (Nf) at ({360/12*5}:2) {};
						\node[circle, draw, fill=black, inner sep=2pt] (Ng) at ({360/12*6}:2) {};
						\node[circle, draw, fill=black, inner sep=2pt] (Nh) at ({360/12*7}:2) {};
						\node[circle, draw, fill=black, inner sep=2pt] (Ni) at ({360/12*8}:2) {};
						\node[circle, draw, fill=black, inner sep=2pt] (Nj) at ({360/12*9}:2) {};
						\node[circle, draw, fill=black, inner sep=2pt] (Nk) at ({360/12*10}:2) {};
						\node[circle, draw, fill=black, inner sep=2pt] (Nl) at ({360/12*11}:2) {};
						
						\node[right] at (Na) {$3$};
						\node[above right] at (Nb) {$2$};
						\node[above right] at (Nc) {$1$};
						\node[above right, xshift=-6pt, yshift=2pt] at (Nd) {$0$}; 
						\node[above left] at (Ne) {$11$};
						\node[above left] at (Nf) {$10$};
						\node[left] at (Ng) {$9$};
						\node[below left] at (Nh) {$8$};
						\node[below left] at (Ni) {$7$};
						\node[above right, xshift=-6pt, yshift=-15pt] at (Nj) {$6$}; 
						\node[below right] at (Nk) {$5$};
						\node[below right] at (Nl) {$4$};
						
						\node[above right, xshift=-40pt, yshift=-50pt] at (Nj) {$\Zl_{12}^\times=\{1,5,7,11\}$}; 
						
						\draw (Na) -- (Nb);
						\draw (Nb) -- (Nc);
						\draw (Nc) -- (Nd);
						\draw (Nd) -- (Ne);
						\draw (Ne) -- (Nf);
						\draw (Nf) -- (Ng);
						\draw (Ng) -- (Nh);
						\draw (Nh) -- (Ni);
						\draw (Ni) -- (Nj);
						\draw (Nj) -- (Nk);
						\draw (Nk) -- (Nl);
						\draw (Nl) -- (Na);
						\draw (Nd) -- (Ni);
						\draw (Nd) -- (Nk);
						\draw (Nc) -- (Nj);
						\draw (Nc) -- (Nh);
						\draw (Nb) -- (Ni);
						\draw (Nb) -- (Ng);
						\draw (Na) -- (Nh);
						\draw (Na) -- (Nf);
						\draw (Nl) -- (Ng);
						\draw (Nl) -- (Ne);
						\draw (Nk) -- (Nf);
						\draw (Ne) -- (Nj);
						
					\end{tikzpicture}
					\caption{$G_{\mathbb{Z}_{12}}$}
				\end{subfigure}%
				\begin{subfigure}{.5\textwidth}
					\centering

					
					\begin{tikzpicture}[scale=1.4] 
						
						\node[circle, draw, fill=black, inner sep=2pt] (Na) at ({360/12*0}:2) {};
						\node[circle, draw, fill=black, inner sep=2pt] (Nb) at ({360/12*1}:2) {};
						\node[circle, draw, fill=black, inner sep=2pt] (Nc) at ({360/12*2}:2) {};
						\node[circle, draw, fill=black, inner sep=2pt] (Nd) at ({360/12*3}:2) {};
						\node[circle, draw, fill=black, inner sep=2pt] (Ne) at ({360/12*4}:2) {};
						\node[circle, draw, fill=black, inner sep=2pt] (Nf) at ({360/12*5}:2) {};
						\node[circle, draw, fill=black, inner sep=2pt] (Ng) at ({360/12*6}:2) {};
						\node[circle, draw, fill=black, inner sep=2pt] (Nh) at ({360/12*7}:2) {};
						\node[circle, draw, fill=black, inner sep=2pt] (Ni) at ({360/12*8}:2) {};
						\node[circle, draw, fill=black, inner sep=2pt] (Nj) at ({360/12*9}:2) {};
						\node[circle, draw, fill=black, inner sep=2pt] (Nk) at ({360/12*10}:2) {};
						\node[circle, draw, fill=black, inner sep=2pt] (Nl) at ({360/12*11}:2) {};
						
						\node[right] at (Na) {$(0,1+x)$};
						\node[above right] at (Nb) {$(2,x)$};
						\node[above right] at (Nc) {$(1,1+x)$};
						\node[above right, xshift=-14pt, yshift=2pt] at (Nd) {$(0,0)$}; 
						\node[above left] at (Ne) {$(2,1+x)$};
						\node[above left] at (Nf) {$(1,0)$};
						\node[left] at (Ng) {$(0,1)$};
						\node[below left] at (Nh) {$(2,0)$};
						\node[below left] at (Ni) {$(1,1)$};
						\node[above right, xshift=-14pt, yshift=-17pt] at (Nj) {$(0,x)$}; 
						\node[below right] at (Nk) {$(2,1)$};
						\node[below right] at (Nl) {$(1,x)$};
						
						\node[above right, xshift=-115pt, yshift=-50pt] at (Nj) {$\left(\mathbb{Z}_3\times \frac{\mathbb{F}_2[x]}{(x^2)}\right)^\times=\{(1,1), (2,1), (1,1+x),(2,1+x)\}$}; 
						
						\draw (Na) -- (Nb);
						\draw (Nb) -- (Nc);
						\draw (Nc) -- (Nd);
						\draw (Nd) -- (Ne);
						\draw (Ne) -- (Nf);
						\draw (Nf) -- (Ng);
						\draw (Ng) -- (Nh);
						\draw (Nh) -- (Ni);
						\draw (Ni) -- (Nj);
						\draw (Nj) -- (Nk);
						\draw (Nk) -- (Nl);
						\draw (Nl) -- (Na);
						\draw (Nd) -- (Ni);
						\draw (Nd) -- (Nk);
						\draw (Nc) -- (Nj);
						\draw (Nc) -- (Nh);
						\draw (Nb) -- (Ni);
						\draw (Nb) -- (Ng);
						\draw (Na) -- (Nh);
						\draw (Na) -- (Nf);
						\draw (Nl) -- (Ng);
						\draw (Nl) -- (Ne);
						\draw (Nk) -- (Nf);
						\draw (Ne) -- (Nj);
						
					\end{tikzpicture}
					\caption{$G_{\mathbb{Z}_3\times \mathbb{F}_2[x]/(x^2)}$}
				\end{subfigure}
				\caption{The graphs $G_{\Zl_{12}}$ and $G_{\Zl_{3}\times \mathbb{F}_2[x]/(x^2)}$ }
				\label{fig2}
			\end{figure}

			\noindent\textbf{Case 1}. $R=R_1$, where $R_1/M_1\cong \Zl_2$ or $R_1/M_1\cong \Zl_3$. In this case, $|R_1|=4$ or $|R_1|=6$. 
			
			From Theorem \ref{clr}, we find that there are only $4$ rings of order $4$ with unity, namely, $\Zl_4$, $\Zl_2\times\Zl_2$, $\mathscr{G}(2)$ and $\mathbb{F}_4$. Recall that $\mathscr{G}(2)\cong \mathbb{F}_2[x]/(x^2)$. However, $\Zl_2\times \Zl_2$ is not a local ring, and $m=1$ for $\mathbb{F}_4$. We thus have $R=\Zl_4$ or $R=\mathbb{F}_2[x]/(x^2)$.
			
			The rings of order $6$ are $\Zl_6$, $C_6(0)$, $C_2(0)\times\Zl_3$ and $C_3(0)\times\Zl_2$. However, $\Zl_6$ is not a local ring, and the rings $C_6(0)$, $C_2(0)\times\Zl_3$ and $C_3(0)\times\Zl_2$ do not contain unity. Therefore, $|R_1|=6$ is not possible.
			
			\noindent\textbf{Case 2}. $R=R_1\times R_2$, where $R_1/M_1\cong \Zl_3$ and $R_2/M_2\cong \Zl_2$. Choosing  $m_1=2$ and $m_2=1$ gives $|R_1|=6$. As in the previous case, we find that $|R_1|=6$ is not possible. Therefore, we must have $m_1=1$ and $m_2=2$. This implies $R_1=\Zl_3$, and $R_2=\Zl_4$ or $R_2=\mathbb{F}_2[x]/(x^2)$. Consequently, $R=\Zl_{12}$ or $R=\Zl_3\times\mathbb{F}_2[x]/(x^2)$.
			
			Thus if $G_R$ exhibits perfect state transfer, then the possible choices of $R$ are $\Zl_2$, $\Zl_3$, $\Zl_4$, $\mathbb{F}_2[x]/(x^2)$, $\Zl_6$, $\Zl_{12}$ and  $\Zl_3 \times \mathbb{F}_2[x]/(x^2)$.
			One can check that the graphs $G_{\Zl_4}$ and $G_{\mathbb{F}_2[x]/(x^2)}$ are isomorphic. Similarly, the graphs $G_{\Zl_{12}}$ and $G_{\Zl_3 \times \mathbb{F}_2[x]/(x^2)}$ are also isomorphic. For example, see Figures~\ref{fig1}~and~\ref{fig2}. 
			
			By Theorem \ref{bh1}, we find that   the only unitary Cayley graphs $G_{\Zn}$ exhibiting perfect state transfer are $G_{\mathbb{Z}_2}$, $G_{\mathbb{Z}_4}$, $G_{\mathbb{Z}_6}$ and $G_{\mathbb{Z}_{12}}$. Hence, we have the desired result.
		\end{proof}

		\section{Quadratic unitary Cayley graphs}
		The \emph{complete pseudograph}, denoted $\mathring{K_n}$, is obtained by attaching a loop to each vertex of $K_n$. The following result is helpful to find the discriminant matrix of $\Gr$ for a local ring $R$.
		\begin{lema}[\cite{qucr}]\label{zad}
			Let $R$ be a finite local ring with unique maximal ideal $M$. If $|R|/|M|$ is odd, then $\Gr\cong \mathcal{G}_{R/M} \otimes \mathring{K}_{|M|}$.
		\end{lema}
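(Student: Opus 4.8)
The plan is to show that, once the vertices of $\Gr$ are grouped by the cosets of $M$, the graph $\Gr$ is the $|M|$-fold blow-up of $\mathcal{G}_{R/M}$, and that this blow-up is exactly the tensor product $\mathcal{G}_{R/M}\otimes\mathring{K}_{|M|}$. Throughout I write $k=R/M$ for the residue field, $\pi\colon R\to k$ for the canonical projection, and $Q_k,T_k$ for the sets attached to $k$ by the general definition (so $T_k=Q_k\cup(-Q_k)\subseteq k^\times$). Since $R$ is a finite local ring, $R^\times=R\setminus M$ and $|R|$ is a power of the prime $p=\operatorname{char}k$; because $|k|=|R|/|M|$ is odd, $p$ is odd, and hence $|M|=|R|/|k|$ is also a power of $p$, so $|M|$ is odd. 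This last fact is where the oddness hypothesis is genuinely used.

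The central step, which I expect to be the main obstacle, is the set identity $T_R=\pi^{-1}(T_k)$. The inclusion $\pi(T_R)\subseteq T_k$ is immediate from $\pi(u^2)=\pi(u)^2$. For the reverse inclusion I would first prove $Q_R=\pi^{-1}(Q_k)$ by a lifting argument. The kernel of $\pi|_{R^\times}\colon R^\times\to k^\times$ is $1+M$, a multiplicative group of order $|M|$, which is odd; hence squaring is a bijection on $1+M$ (an abelian group of odd order has no nontrivial $2$-torsion). Now take $r\in R$ with $\pi(r)\in Q_k$, say $\pi(r)=\pi(u_0)^2$ for some $u_0\in R^\times$. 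Then $\pi(ru_0^{-2})=1$, so $ru_0^{-2}\in 1+M$, and by surjectivity of squaring on $1+M$ we may write $ru_0^{-2}=w^2$ with $w\in 1+M$, giving $r=(u_0w)^2\in Q_R$. Noting that every element of $\pi^{-1}(Q_k)$ is automatically a unit (as $Q_k\subseteq k^\times$), this yields $Q_R=\pi^{-1}(Q_k)$; applying $-1$ gives $-Q_R=\pi^{-1}(-Q_k)$, and the union gives $T_R=\pi^{-1}(T_k)$. In particular $T_R$ is a union of full cosets of $M$ and meets $M$ in the empty set.

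The adjacency structure follows immediately: for $u,v\in R$ we have $u-v\in T_R$ if and only if $\pi(u)-\pi(v)\in T_k$, that is, $u\sim v$ in $\Gr$ exactly when $\pi(u)\sim\pi(v)$ in $\mathcal{G}_k$. Consequently two vertices lying in distinct cosets $a\neq c$ of $M$ are adjacent precisely when $a\sim c$ in $\mathcal{G}_k$ (and then all $|M|^2$ cross-pairs are edges), while two vertices in the same coset are never adjacent, since $0\notin T_k$.

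Finally I would pass to matrices. Choosing coset representatives identifies the vertex set $R$ with $k\times M$, and in this ordering the previous paragraph says precisely that $A(\Gr)\congp A(\mathcal{G}_k)\otimes J_{|M|}$. Since $\mathring{K}_{|M|}$ is $K_{|M|}$ with a loop at every vertex, its adjacency matrix is the all-ones matrix $J_{|M|}$, so by the identity $A(G\otimes H)=A(G)\otimes A(H)$ recorded earlier we obtain $A(\mathcal{G}_k\otimes\mathring{K}_{|M|})=A(\mathcal{G}_k)\otimes J_{|M|}$. Hence $A(\Gr)\congp A(\mathcal{G}_{R/M}\otimes\mathring{K}_{|M|})$, and since permutation-similar adjacency matrices correspond to isomorphic graphs, we conclude $\Gr\cong\mathcal{G}_{R/M}\otimes\mathring{K}_{|M|}$.
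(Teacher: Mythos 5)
Your proof is correct, but note that the paper itself contains no proof of this lemma: it is imported verbatim from Liu and Zhou \cite{qucr}, so the comparison is with the cited source rather than with an in-paper argument. Your route is essentially the standard one from that source. The heart of the matter is that the connection set $T_R$ is a union of cosets of $M$, equivalently $Q_R=\pi^{-1}(Q_{R/M})$, and your lifting step --- squaring is a bijection on the kernel $1+M$ of $R^\times\to(R/M)^\times$ because $|1+M|=|M|$ is odd --- is an elementary, Hensel-free way to obtain it; it is exactly the content of the isomorphism $Q_R\cong Q_{R/M}\times(1+M)$ that this paper later quotes from the same source in Section 5, and it correctly isolates where the oddness of $|R|/|M|$ enters (where the paper needs a square lift elsewhere, in Lemma \ref{field1}, it invokes Hensel's lemma instead; your odd-order-group argument is a clean substitute). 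The remaining steps --- same-coset pairs nonadjacent because $0\notin T_{R/M}$, cross-coset pairs completely joined, and $A(\Gr)\congp A(\mathcal{G}_{R/M})\otimes J_{|M|}=A\bigl(\mathcal{G}_{R/M}\otimes\mathring{K}_{|M|}\bigr)$ --- are routine and correctly executed. Two trivial points worth a sentence each in a final write-up: justify the existence of the unit lift $u_0$ (any preimage of an element of $(R/M)^\times$ avoids $M$ and is therefore a unit, since $R^\times=R\setminus M$), and flag that the identity $A(G\otimes H)=A(G)\otimes A(H)$ recorded in Section 4 is stated there for simple graphs, so you are implicitly extending it to the pseudograph $\mathring{K}_{|M|}$, whose adjacency matrix is $J_{|M|}$; both extensions are harmless.
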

		For two disjoint subsets $B$ and $C$ of $\{1,\hdots,s\}$, define
		\begin{equation*}\label{expf}
			\ld_{B,C}=(-1)^{|C|}\dfrac{|R^\times|}{2^s\prod_{i\in B}\left(\sqrt{|R_i|/m_i}+1\right)\prod_{j\in C}\left(\sqrt{|R_j|/m_j}-1\right)}.\end{equation*}
		The following result gives the eigenvalues of the adjacency matrix of $\Gr$.
		\begin{theorem}[\cite{qucr}]\label{evqucr}
			Let $R$ be a ring as in Assumption \ref{as} such that  $|R_i|/m_i\equiv 1 \pmod 4$ for \linebreak[4] $i\in\{1,\hdots,s\}$. Then the eigenvalues of the adjacency matrix of $\Gr$ are
			\begin{enumerate}[label=(\roman*)]
				\item $\ld_{B,C}$, with multiplicity $\dfrac{1}{2^{|B|+|C|}}\prod\limits_{k\in B\cup C}(|R_k|/m_k-1)$ for all pairs of disjoint subsets $B$ and  $C$ of $\{1,\ldots,s\}$, and
				\item $0$, with multiplicity $|R|-\sum\limits_{\substack{B, C\subseteq\{1,\ldots,s\}\\ B\cap C=\emptyset}}\left(\dfrac{1}{2^{|B|+|C|}}\prod\limits_{k\in B\cup C}(|R_k|/m_k-1)\right)$.
			\end{enumerate}
		\end{theorem}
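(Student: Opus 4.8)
The plan is to use the hypothesis $|R_i|/m_i\equiv 1\pmod 4$ to turn $\Gr$ into a tensor product of graphs with classical spectra, and then read off the eigenvalues through the Kronecker product. Write $q_i=|R_i|/m_i$. First I would show that $T_{R_i}=Q_{R_i}$ for each $i$. Since $q_i\equiv 1\pmod 4$, the element $-1$ is a square in the residue field $R_i/M_i\cong\mathbb{F}_{q_i}$; moreover $q_i$ is odd, so $1+M_i$ has odd order and every element of $1+M_i$ is a square, and lifting a square root of $-1$ from $R_i/M_i$ then gives $-1\in Q_{R_i}$. Hence $-Q_{R_i}=Q_{R_i}$ and $T_{R_i}=Q_{R_i}$. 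As $R^\times=R_1^\times\times\cdots\times R_s^\times$, squaring componentwise yields $Q_R=Q_{R_1}\times\cdots\times Q_{R_s}$, and because each factor is stable under negation we get $-Q_R=Q_R$, whence $T_R=Q_R=T_{R_1}\times\cdots\times T_{R_s}$. Therefore $\Gr=\mathcal{G}_{R_1}\otimes\cdots\otimes\mathcal{G}_{R_s}$ and $A(\Gr)=A(\mathcal{G}_{R_1})\otimes\cdots\otimes A(\mathcal{G}_{R_s})$.

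Next I would compute the spectrum of each local factor. Since $q_i$ is odd, Lemma \ref{zad} gives $\mathcal{G}_{R_i}\cong\mathcal{G}_{R_i/M_i}\otimes\mathring{K}_{m_i}$, so $A(\mathcal{G}_{R_i})\congp A(\mathcal{G}_{\mathbb{F}_{q_i}})\otimes J_{m_i}$. As $q_i\equiv 1\pmod 4$, the connection set $T_{\mathbb{F}_{q_i}}$ is exactly the set of nonzero squares, so $\mathcal{G}_{\mathbb{F}_{q_i}}$ is the Paley graph on $q_i$ vertices, with eigenvalues $\frac{q_i-1}{2}$ of multiplicity $1$ and $\frac{-1\pm\sqrt{q_i}}{2}$ each of multiplicity $\frac{q_i-1}{2}$, while $J_{m_i}$ has eigenvalues $m_i$ (multiplicity $1$) and $0$ (multiplicity $m_i-1$). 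By Lemma \ref{kp} \ref{kp2}, the nonzero eigenvalues of $\mathcal{G}_{R_i}$ are thus $\frac{(q_i-1)m_i}{2}$, $\frac{(\sqrt{q_i}-1)m_i}{2}$ and $-\frac{(\sqrt{q_i}+1)m_i}{2}$, with multiplicities $1$, $\frac{q_i-1}{2}$ and $\frac{q_i-1}{2}$ respectively, and all remaining eigenvalues are $0$.

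Finally I would assemble the global spectrum via Lemma \ref{kp} \ref{kp2}: every eigenvalue of $A(\Gr)$ is a product $\prod_{i=1}^s\mu_i$ with $\mu_i\in\spec_A(\mathcal{G}_{R_i})$, and it is nonzero precisely when each $\mu_i$ is one of the three nonzero values above. Recording for each $i$ whether $\mu_i$ is the ``$+$'' value (put $i\in B$), the ``$-$'' value (put $i\in C$), or the principal value (put $i\notin B\cup C$) sets up a correspondence between the nonzero eigenvalues and pairs of disjoint subsets $B,C\subseteq\{1,\dots,s\}$. Using the factorization $q_i-1=(\sqrt{q_i}-1)(\sqrt{q_i}+1)$ together with $\prod_{i=1}^s\frac{(q_i-1)m_i}{2}=\frac{|R^\times|}{2^s}$, the product of the chosen eigenvalues telescopes to exactly $\ld_{B,C}$, and the product of the per-factor multiplicities is $\prod_{k\in B\cup C}\frac{q_k-1}{2}=\frac{1}{2^{|B|+|C|}}\prod_{k\in B\cup C}(q_k-1)$, which is part (i). Every remaining basis eigenvalue equals $0$, giving the multiplicity in part (ii) as $|R|$ minus the sum of the others.

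The main obstacle is the first step: confirming that the congruence hypothesis genuinely yields the product decomposition $T_R=T_{R_1}\times\cdots\times T_{R_s}$. This rests entirely on $-1\in Q_{R_i}$ in each local ring, which fails when some $q_i\equiv 3\pmod 4$; in that case the union $Q_R\cup(-Q_R)$ does not factor, the clean tensor structure collapses, and a more delicate analysis is required, precisely the situation handled separately later for a factor $R_0$ with $|R_0|/|M_0|\equiv 3\pmod 4$. Once the tensor decomposition is secured, the remainder is routine bookkeeping with the Paley and pseudograph spectra and the identity $q_i-1=(\sqrt{q_i}-1)(\sqrt{q_i}+1)$.
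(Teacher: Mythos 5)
Your proposal is correct, but there is an important point to flag: this paper does not prove Theorem \ref{evqucr} at all — it is imported verbatim from Liu and Zhou \cite{qucr} — so there is no in-paper proof to compare against. Your blind reconstruction is essentially the standard derivation (and the route taken in the cited source), and it is consistent with the toolkit this paper assembles elsewhere: the hypothesis $|R_i|/m_i\equiv 1\pmod 4$ gives $-1\in Q_{R_i}$ by Hensel lifting, which is exactly the content of Lemma \ref{field1} proved later in the paper; this yields $T_R=Q_R=T_{R_1}\times\cdots\times T_{R_s}$ and hence $\Gr=\mathcal{G}_{R_1}\otimes\cdots\otimes\mathcal{G}_{R_s}$; Lemma \ref{zad} reduces each local factor to $\mathcal{G}_{R_i/M_i}\otimes\mathring{K}_{m_i}$, where $\mathcal{G}_{R_i/M_i}$ is the Paley graph on $q_i=|R_i|/m_i$ vertices and $A(\mathring{K}_{m_i})=J_{m_i}$; and Lemma \ref{kp} \ref{kp2} together with the telescoping identity $q_i-1=(\sqrt{q_i}-1)(\sqrt{q_i}+1)$ and $\prod_{i=1}^s\frac{(q_i-1)m_i}{2}=\frac{|R^\times|}{2^s}$ produces exactly $\ld_{B,C}$ with multiplicity $\frac{1}{2^{|B|+|C|}}\prod_{k\in B\cup C}(q_k-1)$, the zero eigenvalue absorbing the remaining dimension. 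I verified the $(B,C)$ bookkeeping: $i\in B$ corresponds to the factor eigenvalue $\frac{(\sqrt{q_i}-1)m_i}{2}$, $j\in C$ to $-\frac{(\sqrt{q_j}+1)m_j}{2}$, and the $(-1)^{|C|}$ sign comes out right. Two minor remarks: your aside that every element of $1+M_i$ is a square (true, since $m_i$ is a power of the odd residue characteristic, so $1+M_i$ has odd order) is redundant once Hensel's lemma lifts a square root of $-1$ from the residue field; and, matching the theorem's own phrasing, your multiplicities are per pair $(B,C)$ without claiming the values $\ld_{B,C}$ are pairwise distinct, which is the correct level of caution.
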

		The next two lemmas are useful for determining the valency of $\Gr$.
		\begin{lema}\label{field1}
			Let $R$ be a finite local ring with maximal ideal $M$ and unity $1$. Then $|R|/|M| \equiv 1 \pmod 4$ if and only if $-1\in Q_R.$ 	
		\end{lema}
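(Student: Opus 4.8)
The plan is to pass to the residue field $k = R/M \cong \mathbb{F}_q$, where $q = |R|/|M|$, and reduce the assertion to the classical criterion that $-1$ is a quadratic residue. Write $\pi\colon R \to k$ for the canonical projection. Since $R$ is local we have $R^\times = R \setminus M = \pi^{-1}(k^\times)$, so $\pi$ restricts to a surjective group homomorphism $R^\times \to k^\times$ carrying $Q_R$ into the set of nonzero squares of $k$. I would first record that $-1$ is a unit of $R$ (as $(-1)^2 = 1$), so that the condition $-1 \in Q_R$ is genuinely a statement about a square root lying in $R^\times$. Throughout, the relevant case is that the residue characteristic is odd; this is automatic on the side $q \equiv 1 \pmod 4$, since such $q$ is odd, and it is the setting in which $Q_R$ and $T_R$ are of interest (in residue characteristic two one has $-1 = 1 \in Q_R$ trivially).

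The central claim is that $-1 \in Q_R$ if and only if $-1$ is a square in $k^\times$. One direction is immediate: if $u \in R^\times$ satisfies $u^2 = -1$, then applying $\pi$ gives $\pi(u)^2 = -1$ in $k$. For the other direction I would run a Henselian lifting argument. Because $R$ is finite, hence Artinian and local, the maximal ideal $M$ is nilpotent, say $M^N = 0$. Suppose $\bar v \in k^\times$ satisfies $\bar v^2 = -1$, and lift $\bar v$ to a unit $v \in R^\times$; then $g(v) \in M$ for $g(x) = x^2 + 1$. Since $q$ is odd, $2 \in R^\times$, so $g'(v) = 2v$ is a unit. Newton's iteration performed modulo the successive powers $M, M^2, \ldots, M^N = 0$ then yields $w \in R$ with $w \equiv v \pmod{M}$ and $g(w) = 0$, i.e.\ $w^2 = -1$; as $w$ is a unit, this gives $-1 \in Q_R$.

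To finish, I would invoke the standard criterion over the finite field. For odd $q$ the group $k^\times = \mathbb{F}_q^\times$ is cyclic of even order $q-1$, so $-1$ is its unique element of order $2$, and $-1$ lies in the index-two subgroup of squares exactly when $2 \mid (q-1)/2$, that is, when $q \equiv 1 \pmod 4$. Chaining this with the central claim gives the equivalence $-1 \in Q_R \iff q \equiv 1 \pmod 4$. I expect the main obstacle to be the converse of the central claim, namely producing an honest square root of $-1$ in $R$ from one in the residue field; the key point to get right there is that $2$ is a unit (equivalently, the residue characteristic is odd), which is precisely what makes $g'(v)$ a unit and lets Hensel's lemma operate across the nilpotent ideal $M$.
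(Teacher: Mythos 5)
Your proof is correct and follows essentially the same route as the paper: project to the residue field, use the cyclic structure of $\mathbb{F}_q^\times$ to decide when $-1$ is a square, and lift back to $R$ via Hensel's lemma. The only real difference is presentational---you carry out the Newton iteration explicitly across the nilpotent maximal ideal and flag that $2\in R^\times$ is what makes it work (also noting the trivial residue-characteristic-two case, which the paper's proof tacitly excludes as well), whereas the paper simply cites Hensel's lemma for local rings.
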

		\begin{proof}
			
			Let $|R|/|M|=q$ and $ q\equiv 1 \pmod 4$. Since $(R/M)^\times$ is a cyclic group, we write $(R/M)^\times = \langle g \rangle$ for some $g \in (R/M)^\times$. Note that $-1 \in (R/M)^\times$, so $-1 = g^a$ for some positive integer $a$. Thus $g^{2a} = 1$. Since $g^{q-1} = 1$, it follows that \(q-1 \mid 2a\). Now $q \equiv 1 \pmod{4}$, so $a$ must be even. Let $a = 2b$ for some positive integer $b$. Substituting this in $g^a=-1$, we find $g^{2b} = -1$. This implies that $(g^b)^2 = -1$. Note that $g^b \in (R/M)^\times$. Therefore $-1 \in Q_{R/M}$. By Hensel's Lemma \cite{ict} for local rings, it follows that $-1 \in Q_R$.  
			
			Conversely, assume that $-1\in Q_R$. Then one can check that $-1\in Q_{R/M}$. Thus $-1=g^2$ for some $g\in (R/M)^\times$. Thus the order of $g$ in the group $(R/M)^\times$ is $4$. This implies that $4$ divides $|R|/|M|-1$. Hence $|R|/|M| \equiv 1 \pmod 4$.
		\end{proof}
		\begin{lema}\label{szz}
			Let $p$ be an odd prime and $q=p^n$, where $n$ is a positive integer. 	Let $\mathbb{F}_q$ be the finite field of order $q$. Then $|Q_{\mathbb{F}_q}|=\dfrac{q-1}{2}$.
		\end{lema}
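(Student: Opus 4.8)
The plan is to exploit the cyclic structure of the multiplicative group of a finite field. Since $\mathbb{F}_q$ is a field, $\mathbb{F}_q^\times$ is a cyclic group of order $q-1$, and because $p$ is odd we have that $q=p^n$ is odd, so $q-1$ is even. By definition $Q_{\mathbb{F}_q}=\{u^2:u\in \mathbb{F}_q^\times\}$ is precisely the image of the squaring map on $\mathbb{F}_q^\times$, so the task reduces to counting that image.

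First I would view the squaring map $\phi:\mathbb{F}_q^\times\to\mathbb{F}_q^\times$ defined by $\phi(x)=x^2$ as a group homomorphism, which is legitimate since $\mathbb{F}_q^\times$ is abelian. Its kernel is $\ker\phi=\{x\in\mathbb{F}_q^\times: x^2=1\}$, i.e. the set of square roots of unity in the field. In a field the polynomial $x^2-1$ has at most two roots, namely $1$ and $-1$, and since $p$ is odd we have $-1\neq 1$, so $\ker\phi=\{1,-1\}$ has order exactly $2$. Applying the first isomorphism theorem then gives
\[
|Q_{\mathbb{F}_q}|=|\operatorname{im}\phi|=\frac{|\mathbb{F}_q^\times|}{|\ker\phi|}=\frac{q-1}{2},
\]
as required.

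Alternatively, and equivalently, one can fix a generator $g$ of the cyclic group $\mathbb{F}_q^\times$ and observe that $Q_{\mathbb{F}_q}=\langle g^2\rangle$, the subgroup generated by $g^2$, whose order is $(q-1)/\gcd(2,q-1)=(q-1)/2$ because $q-1$ is even. Either formulation yields the claim in a few lines.

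There is no real obstacle here; the only point requiring care is verifying that the kernel has order exactly $2$ rather than $1$, which is exactly where the hypothesis that $p$ is odd (equivalently $-1\neq 1$ in $\mathbb{F}_q$) is used. This is the sole place the oddness of $p$ enters, and it is what distinguishes the present count from the degenerate characteristic-two situation.
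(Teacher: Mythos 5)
Your proof is correct and takes essentially the same route as the paper: both view squaring as a group homomorphism on $\mathbb{F}_q^\times$ with kernel of order $2$ and conclude $|Q_{\mathbb{F}_q}|=(q-1)/2$ by the first isomorphism theorem. The only cosmetic difference is that the paper infers $|\mathrm{Ker}(f)|=2$ from cyclicity and the evenness of $q-1$, whereas you count the roots of $x^2-1$ directly (and your remark pinpointing where oddness of $p$ is used is a nice touch).
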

		\begin{proof}
			Consider the map \(f:{\mathbb{F}_q}^\times\rightarrow Q_{\mathbb{F}_q}\) defined by $f(a)=a^2$. Clearly, $f$ is a surjective homomorphism. Note that ${\mathbb{F}_q}^\times$ is a cyclic group with $|{\mathbb{F}_q}^\times|=q-1$, which is an even number. This altogether implies $|\textnormal{Ker}(f)|=2$. Therefore $|Q_{\mathbb{F}_q}|=\dfrac{|{\mathbb{F}_q}^\times|}{2}=\dfrac{q-1}{2}$.
		\end{proof}
		Let $R$ be a finite local ring with maximal ideal $M$ such that $|R|/|M|=1 \pmod 4$. As $R/M$ is a field, by Lemma~\ref{szz}, we have
		$$|Q_{R/M}|=\dfrac{|R|/|M|-1}{2}=\dfrac{|R^\times|}{2|M|}.$$
		It was proved in \cite{qucr} that if $R$ is a finite local ring with maximal ideal $M$, then $Q_R\cong Q_{R/M}\times (1+M)$ as multiplicative groups. Let $R$ be a ring as in Assumption \ref{as} such that $|R_i|/m_i\equiv 1 \pmod 4$ for $i\in\{1,\hdots,s\}$. Then by Lemma~\ref{field1}, $-1\in Q_{R_i}$ for $i\in\{1,\hdots,s\}$. Note that $Q_R=Q_{R_1}\times\cdots \times Q_{R_s}$. Thus $-1\in Q_R$, and so $T_R=Q_R$. Therefore the valency $k$ of the graph $\Gr$ is given by
		$$k=|Q_R|= \prod_{i=1}^{s}|Q_{R_i}|=\prod_{i=1}^{s} |Q_{R_i/M_i}||M_i|= \frac{|R^\times|}{2^s}.$$
		For two disjoint subsets $B$ and $C$ of $\{1,\hdots,s\}$, let $$\mu_{B,C}=(-1)^{|C|}\dfrac{1}{\prod_{i\in B}\left(\sqrt{|R_i|/m_i}+1\right)\prod_{j\in C}\left(\sqrt{|R_j|/m_j}-1\right)}.$$
		
		Then by Theorem \ref{discp}, the eigenvalues of $P(\Gr)$ are $0$ and $\mu_{B,C}$, where $B$ and $C$ are disjoint subsets of $\{1,\hdots,s\}$. Their respective multiplicities are provided in Theorem \ref{evqucr}.
		\begin{theorem}\label{qucrp}
			Let $R$ be a ring as in Assumption \ref{as} such that $|R_i|/m_i\equiv 1 \pmod 4$ for $i\in\{1,\hdots,s\}$. Then the quadratic unitary Cayley graph $\Gr$ is periodic if and only if $s=1$ and $R_1/M_1\cong \mathbb{Z}_5$.
		\end{theorem}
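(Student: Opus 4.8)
The plan is to apply the spectral characterization of periodicity in Theorem~\ref{ls} to the eigenvalue list of $P(\Gr)$ recorded just above the statement: the spectrum consists of $0$ together with the numbers $\mu_{B,C}$ ranging over disjoint pairs $B,C\subseteq\{1,\hdots,s\}$. Throughout I write $q_i=|R_i|/m_i$; since each $R_i/M_i$ is a field with $q_i\equiv1\pmod4$, every $q_i$ is a prime power with $q_i\geq5$. Because Theorem~\ref{ls} is an ``if and only if'' statement about the whole spectrum, for necessity it suffices to exhibit a single eigenvalue lying outside the admissible sets, while for sufficiency I must check that \emph{all} eigenvalues lie inside them.

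For the sufficiency direction I would take $s=1$ and $q_1=5$. Here the only disjoint pairs are $(\emptyset,\emptyset)$, $(\{1\},\emptyset)$ and $(\emptyset,\{1\})$, giving the three nonzero eigenvalues $\mu_{\emptyset,\emptyset}=1$, $\mu_{\{1\},\emptyset}=\frac{1}{\sqrt5+1}=-\frac14+\frac{\sqrt5}{4}$ and $\mu_{\emptyset,\{1\}}=-\frac{1}{\sqrt5-1}=-\frac14-\frac{\sqrt5}{4}$. Since $0,1\in\{\pm1,\pm\tfrac12,0\}$ and the other two both lie in $\{\pm\frac14\pm\frac{\sqrt5}{4}\}$, all three conditions of Theorem~\ref{ls} are met and $\Gr$ is periodic.

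For necessity, assume $\Gr$ is periodic. The two eigenvalues I would interrogate are the single-index value $\mu_{\{i\},\emptyset}=\frac{1}{\sqrt{q_i}+1}$ (present with positive multiplicity for every $i$) and, when $s\geq2$, the two-index value $\mu_{\{1,2\},\emptyset}$. First I would split on whether $q_i$ is a perfect square. If $q_i=t^2$, then $\mu_{\{i\},\emptyset}=\frac{1}{t+1}$ is rational and lies in $(0,\tfrac14]$ because $t\geq3$; but Theorem~\ref{ls} forces rational eigenvalues into $\{\pm1,\pm\tfrac12,0\}$, a contradiction, so no $q_i$ is a square. If $q_i$ is not a square, rationalizing gives $\mu_{\{i\},\emptyset}=-\frac{1}{q_i-1}+\sqrt{q_i/(q_i-1)^2}\in\Delta$, whose rational part is negative. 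Comparing with the permitted $\Delta$-values $\{\pm\frac{\sqrt3}{2},\ \pm\frac14\pm\frac{\sqrt5}{4},\ \pm\frac{1}{\sqrt2}\}$, the only candidate with negative rational part and positive surd coefficient is $-\frac14+\frac{\sqrt5}{4}$, which forces $-\frac{1}{q_i-1}=-\frac14$, i.e.\ $q_i=5$. Hence every $q_i=5$.

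Finally, to rule out $s\geq2$, I would set $q_1=q_2=5$ and compute $\mu_{\{1,2\},\emptyset}=\frac{1}{(\sqrt5+1)^2}=\frac38-\frac{\sqrt5}{8}\in\Delta$; this is none of the admissible $\Delta$-values (its surd coefficient is $\frac18$, not $\frac14$), contradicting Theorem~\ref{ls}. Thus $s=1$ and $q_1=5$, that is, $R_1/M_1\cong\Zl_5$. The hard part will be the exhaustive matching of the irrational eigenvalues against the finite admissible list of Theorem~\ref{ls}: one must argue carefully that a number of the form $a+\sqrt b$ (with $a$ rational and $b$ a non-square rational) equals a prescribed value only when rational parts and radicands coincide, and the perfect-square versus non-square dichotomy has to be dispatched uniformly over all indices $i$. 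Once that comparison is set up, the remaining computations are routine.
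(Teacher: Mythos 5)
Your proof is correct and follows essentially the same route as the paper: feed the eigenvalue formulas $\mu_{B,C}$ into Theorem~\ref{ls}, use the single-index eigenvalues $\mu_{\{i\},\emptyset}=\frac{1}{\sqrt{|R_i|/m_i}+1}$ to force $|R_i|/m_i=5$ for every $i$, use a two-index eigenvalue to rule out $s\geq 2$, and verify the three eigenvalues $1$, $-\frac14+\frac{\sqrt5}{4}$, $-\frac14-\frac{\sqrt5}{4}$ for sufficiency. In fact your write-up is tighter than the paper's in two respects. First, the paper sends $\frac{\sqrt{q_i}-1}{q_i-1}$ directly to the $\Delta$-list without addressing the case where $q_i=|R_i|/m_i$ is a perfect square (e.g.\ $q_i=9$, which satisfies $q_i\equiv 1\pmod 4$ and makes the eigenvalue the rational number $\frac14$); your explicit square/non-square dichotomy, handled through the rational constraint $\spec_P^{\Ql}\subseteq\{\pm1,\pm\frac12,0\}$, closes that gap, and your uniqueness argument for the representation $a+\sqrt b$ is exactly what is needed to justify the paper's unexplained step ``consequently $|R_i|/m_i=5$.'' Second, to eliminate $s\geq 2$ the paper takes $B=\{1\}$ and $C=\{1,2\}$, which are not disjoint, so its value $\frac{\sqrt5+1}{16}$ is not actually an eigenvalue supplied by Theorem~\ref{evqucr}; your choice $B=\{1,2\}$, $C=\emptyset$ with $\mu_{\{1,2\},\emptyset}=\frac{3-\sqrt5}{8}$ is legitimate and yields the same contradiction (as would $\mu_{\{1\},\{2\}}=-\frac14$, which is rational and outside $\{\pm1,\pm\frac12,0\}$), so your argument repairs this slip rather than reproducing it.
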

		\begin{proof}
			Let $\Gr$ be periodic. Consider the subsets $B=\{i\}$ and $C=\emptyset$ of $\{1,\hdots,s\}$, where $i\in\{1,\hdots,s\}$. We have
			$$\mu_{B,C}=\dfrac{\sqrt{|R_i|/m_i}-1}{|R_i|/m_i-1}.$$
			By Theorem \ref{ls}, we must have 
			$$\dfrac{\sqrt{|R_i|/m_i}-1}{|R_i|/m_i-1}\in \left\{\pm \frac{\sqrt{3}}{2}, \pm\frac{1}{4}\pm\frac{\sqrt{5}}{4},\pm\frac{1}{\sqrt{2}}\right\}~\text{for}~i\in\{1,\hdots,s\}.$$
			Consequently $|R_i|/m_i=5$ for each $i$. Since $R_i/M_i$ is a field, we finally have $R_i/M_i\cong\Zl_5$ for $i\in\{1,\hdots,s\}$. 
			
			Suppose, if possible, $s\geq 2$. Then for the subsets $B=\{1\}$ and $C=\{2\}$, we have
			$$\mu_{B,C}=-\frac{1}{4}\notin  \left\{\pm1,\pm\frac{1}{2},0 \right\}.$$
			Theorem \ref{ls} again gives that $\Gr$ is not periodic, a contradiction. Thus $s=1$ and $R_1/M_1\cong \Zl_5$.
			
			Conversely, suppose that $s=1$ and $R_1/M_1\cong \Zl_5$. Then the nonzero eigenvalues of $P(\Gr)$ are $1,\frac{\sqrt{5}-1}{4}$ and  $\frac{-\sqrt{5}-1}{4}$. Therefore, by Theorem \ref{ls}, $\Gr$ is periodic.
		\end{proof}
		Indeed, if $R$ is a finite local ring with the maximal ideal $M$ such that $R/M \cong \mathbb{Z}_5$ and $m=|M|$, then $\mathcal{G}_R \cong C_5[\overline{K}_m]$, the lexicographic product of the cycle on $5$ vertices with the complement of the complete graph on $m$ vertices.  This can be seen as follows. 
		
		Consider the surjective homomorphism $\pi : R \to R/M$ defined by $\pi(r)=r+M$ for $r\in R$. Fix a ring isomorphism $f:R/M\to \Zl_5$ and identify the element $f^{-1}(i)$ of $R/M$ with $i$ itself, for each $i\in\Zl_5$. Thus $\pi(r)\in\{0,1,2,3,4\}$ for $r\in R$. 
		
		We now prove that $Q_R = \{r\in R:\pi(r)\in\{1,4\}\}$. If $r \in R^\times$, then $\pi(r) \in (R/M)^\times= \{1,2,3,4\}$.  
		Hence $\pi(r^2) \in \{1,4\}$.  Thus
		$Q_R \subseteq \{ r \in R : \pi(r) \in \{1,4\}\}$. Conversely,  let $a \in R$ with $\pi(a) \in \{1,4\}$.  Then one can prove that $a$ is a unit and there exists $b_0\in R$ such that $\pi(a)=\pi(b_0)^2$. Since $\pi(b_0)^2\in\{1,4\}$, $b_0$ is also a unit. Consider the polynomial $p(x)=x^2-a\in R[x]$, and let $\bar p(x)\in R/M[x]$ denote its reduction modulo  $M$.  Then $\bar p(b_0+M)=0+M$ and $\bar p'(b_0+M)=2b_0+M$, where $\bar p'(x)=(2+M)x$. Note that $2b_0$ is also a unit in $R$, and so $\bar p'(b_0+M)\neq 0+M$.  By Hensel's lemma \cite{ict} for finite local ring, there exists a simple root $b \in R$ of $p$, that is,  $b^2 = a$.  Moreover, $b$ is a unit.
		Consequently $a = b^2 \in Q_R$.  Hence $\{ r \in R : \pi(r) \in \{1,4\}\} \subseteq Q_R.$ Thus $Q_R=\{r\in R:\pi(r)\in\{1,4\}\}$. 
		
		For each 
		$i \in \{0,1,2,3,4\}$, choose an element $a_i \in R$ such that $
		\pi(a_i) = i$. Consequently, $R$ decomposes as the disjoint union
		\[
		R = \bigcup_{i=0}^{4} (a_i + M).
		\]
		
		Note that $T_R=Q_R$. Let $u\in a_i+M$ and $v\in a_j+M$. Then $u$ and $v$ are adjacent in $\Gr$ if and only if $\pi(u-v)\in\{1,4\}$, that is, if and only if $j-i\equiv \pm1 \pmod{5}.$ For each $i\in\{0,1,2,3,4\}$, identify the elements in  $a_i+M$ with that in $\{i\}\times\{1,\hdots,m\}$.  Thus the elements of $R$ are identified with the elements in $\mathbb{Z}_5\times\{1,\dots,m\}$. With this identification, if $u=(i,\alpha), v=(j,\beta)\in R$ then $u$ is adjacent to $v$ in $\mathcal{G}_R$ 
		if and only if
		$j -i\equiv \pm1 \pmod{5}$.  Thus $\mathcal{G}_R \cong C_5[\overline{K}_m]$.

		\begin{theorem}\label{qucrpst}
			Let $R$ be a ring as in Assumption \ref{as} such that $|R_i|/m_i\equiv 1 \pmod 4$ for $i\in\{1,\hdots,s\}$. Then the only quadratic unitary Cayley graph $\Gr$ exhibiting perfect state transfer is $\mathcal{G}_{\Zl_{10}}$.
		\end{theorem}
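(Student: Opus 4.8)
The plan is to mirror, in the quadratic setting, the two–stage reduction used for the unitary graphs (Theorem~\ref{ucrpst}): first cut the ring down to a single local factor via periodicity, then cut the graph down to a single small member via a Kronecker–factorization argument. Since $\Gr$ is a Cayley graph it is vertex-transitive, so by Theorem~\ref{thmper} the occurrence of perfect state transfer forces $\Gr$ to be periodic. Under the standing hypothesis $|R_i|/m_i\equiv 1\pmod 4$, Theorem~\ref{qucrp} then gives $s=1$ and $R_1/M_1\cong\Zl_5$. Hence $R$ is a finite local ring whose residue field is $\Zl_5$, so $|R|=5^N$ and $m=|M|=5^{N-1}$ for some $N\ge 1$. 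This already restricts the candidates to the one-parameter family indexed by $N$.

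Next I would exploit the structural decomposition. Because $|R|/|M|=5$ is odd, Lemma~\ref{zad} yields $\Gr\cong\mathcal{G}_{\Zl_5}\otimes\mathring{K}_m$. Using $A(\mathring{K}_m)=J_m$, the regularity of $\Gr$, and Lemma~\ref{discp}, together with part~\ref{kp1} of Lemma~\ref{kp}, one obtains
\[
P(\Gr)\congp P(\mathcal{G}_{\Zl_5})\otimes\tfrac{1}{m}J_m ,
\]
exactly parallel to the factorization in the proof of Lemma~\ref{perm}. A direct computation identifies $\mathcal{G}_{\Zl_5}$ with the $5$-cycle $C_5$, whose discriminant has the three nonzero eigenvalues $1,\frac{\sqrt5-1}{4},\frac{-\sqrt5-1}{4}$ recorded in Theorem~\ref{qucrp}.

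To eliminate $N\ge 2$ I would reuse the eigenprojector argument of Lemma~\ref{perm}. Writing $\frac1m J_m=1\cdot E_1+0\cdot E_2$ with $E_2=I_m-\frac1m J_m$, and noting that $P(\mathcal{G}_{\Zl_5})$ has no zero eigenvalue, the eigenprojector of $\Gr$ for the eigenvalue $0$ is $I\otimes E_2$. If perfect state transfer occurs between distinct vertices $u=\ex\otimes\ey$ and $v=\ez\otimes\ew$, then Lemma~\ref{pstc} forces $\ex\otimes E_2\ey=\gamma\,\ez\otimes E_2\ew$ for some $\gamma\in\{-1,1\}$; for $m\ge 3$ this is impossible (the delicate value $m=2$ that survives in the unitary proof never occurs here, since $m=5^{N-1}\in\{1,5,25,\dots\}$). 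This rules out every $N\ge 2$, leaving only $N=1$, that is $R=\Zl_5$ and $\Gr\cong\mathcal{G}_{\Zl_5}\cong C_5$, as the sole surviving candidate.

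The remaining and decisive step is to settle transfer on this base graph, and I expect it to be the only genuinely delicate point. By Lemma~\ref{pstdef} perfect state transfer between distinct $u,v$ is equivalent to some $T_\tau(P)$ being a signed permutation with $T_\tau(P)\eu=\pm\ev$; since $P$ is a symmetric circulant, so is every $T_\tau(P)$, and such a matrix can be a nontrivial signed permutation only when the underlying cycle is even (the antipodal shift), which is precisely the mechanism producing perfect state transfer on the even cycle $\mathcal{G}_{\Zl_{10}}$ studied in~\cite{bhakta2}. The main obstacle is therefore the parity analysis of $T_\tau(P)$ on the base member, together with the explicit exhibition of a transfer time on the positive example; once this is carried out, the combination of Steps~1--3 with the base-case verification pins down the unique graph $\mathcal{G}_{\Zl_{10}}$, all the intermediate candidates having been removed by the Kronecker argument.
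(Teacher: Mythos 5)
Your reduction is essentially the paper's: vertex-transitivity plus Theorems \ref{thmper} and \ref{qucrp} force $s=1$ and $R/M\cong\Zl_5$; Lemma \ref{zad} gives $P(\Gr)=P(\mathcal{G}_{R/M})\otimes\frac{1}{m}J_m$; and the eigenprojector argument of Lemma \ref{perm} excludes $m\geq 3$ (your explicit remark that $P(\mathcal{G}_{\Zl_5})$ has no zero eigenvalue, so that the projector for the eigenvalue $0$ is exactly $I\otimes E_2$, is a detail the paper leaves implicit and is correct). Up to this point the proposal is sound.

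The gap is in the endgame: as written, your argument does not prove the stated theorem. From $|R|=5^N$ you force $m=1$, so your sole surviving candidate is $\mathcal{G}_{\Zl_5}\cong C_5$, an odd cycle with no perfect state transfer; carried to completion, your proof concludes that \emph{no} graph in the family admits perfect state transfer, and the graph named in the theorem, $\mathcal{G}_{\Zl_{10}}$, is never produced. The closing appeal to "the mechanism on the even cycle $\mathcal{G}_{\Zl_{10}}$" does not repair this, because $\Zl_{10}\cong\Zl_2\times\Zl_5$ lies outside your reduced family (it is not local), so no candidate realizes that graph. The paper instead retains the case $m=2$, i.e.\ $|R|=10$, classifies the unital rings of order $10$ to get $R=\Zl_{10}$, and then settles both candidates $C_5$ and $C_{10}$ by quoting the criterion from \cite{bhakta1} that the $n$-cycle admits perfect state transfer if and only if $n$ is even --- a step you only sketch (the parity analysis of $T_\tau(P)$ on circulants is left as "to be carried out"). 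To be fair, your number-theoretic observation is correct and actually exposes a looseness in the paper: a finite local ring has no nontrivial idempotents, hence prime-power characteristic, so a local ring with residue field $\Zl_5$ has order $5^N$ and the paper's case $m=2$ is vacuous under a strict reading (indeed $\Zl_{10}$ does not satisfy the hypothesis $|R_i|/m_i\equiv 1\pmod 4$). But judged as a blind proof of the statement as given, you must either exhibit $\mathcal{G}_{\Zl_{10}}$ as the paper does or conclude the theorem's conclusion is vacuous; doing neither, and leaving the cycle analysis unproved, constitutes a genuine gap.
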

		\begin{proof}
			Since the quadratic unitary Cayley graph is vertex-transitive, by Theorem \ref{thmper} and Theorem \ref{qucrp}, $R$ is a local ring with unique maximal ideal $M$ such that $R/M\cong \mathbb{Z}_5$. Note that $|Q_{R/M}|=\frac{|R^\times|}{2m}$ and $|Q_R|=\frac{|R^\times|}{2}$, where $m=|M|$. Thus $P(\Gr)=\frac{2}{|R^\times|}A(\Gr)$ and $P(\mathcal{G}_{R/M})=\frac{2m}{|R^\times|}A(\mathcal{G}_{R/M})$. By Lemma~\ref{zad}, the discriminant matrix of $\Gr$ is given by
			$$P(\Gr)=P(\mathcal{G}_{R/M})\otimes \frac{1}{m}J_m.$$
			As in the proof of Lemma \ref{perm}, we conclude that if perfect state transfer occurs in $\Gr$, then $m\leq 2$. If $m=1$ then clearly $R=\Zl_{5}$. If $m=2$ then $|R|=10$.
			The rings of order $10$ are $\Zl_{10}$, $C_{10}(0)$, $C_2(0)\times\Zl_5$ and $C_5(0)\times\Zl_2$. However, the rings $C_{10}(0)$, $C_5(0)\times\Zl_2$ and $C_2(0)\times\Zl_5$ do not contain unity. Thus $R=\Zl_{10 }$.
			The graphs $\mathcal{G}_{\Zl_5}$ and $\mathcal{G}_{\Zl_{10}}$ are cycles on $5$ and $10$ vertices, respectively. It is known \cite{bhakta1} that the cycle on $n$ vertices exhibits perfect state transfer if and only if $n$ is even. Hence, the result follows.		
		\end{proof}
		We now characterize periodicity and perfect state transfer on $\Gr$ under a different set of conditions on the ring $R$. The next result gives the eigenvalues of $A(\Gr)$, where $R$ is a local ring with maximal ideal $M$ and $|R|/|M|\equiv3\pmod 4$.	
		\begin{theorem}[\cite{qucr}]
			Let $R$ be a local ring with maximal ideal $M$ and $|R|/|M|\equiv3\pmod 4$.	Then the eigenvalues of the adjacency  matrix of $\Gr$ are $|R^\times|$, $-\dfrac{|R^\times|}{|R|/|M|-1}$ and $0$, with multiplicities $1$, $|R|/|M|-1$ and $|R|-|R|/|M|$, respectively.
		\end{theorem}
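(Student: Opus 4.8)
The plan is to show that under the hypothesis $|R|/|M|\equiv 3\pmod 4$ the quadratic unitary Cayley graph $\Gr$ in fact coincides with the unitary Cayley graph $G_R$, and then to read off the spectrum directly from Theorem~\ref{evuc} in the special case $s=1$. This reduces a seemingly new spectral computation to an already-known one.

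First I would establish that the two connection sets agree, i.e.\ $T_R=R^\times$. Since $|R|/|M|\equiv 3\pmod 4$, Lemma~\ref{field1} gives $-1\notin Q_R$. I claim this forces $Q_R$ and $-Q_R$ to be disjoint: if some $x$ lay in both, then $x=a^2=-b^2$ for units $a,b$, whence $-1=(ab^{-1})^2\in Q_R$, a contradiction. Hence $T_R=Q_R\cup(-Q_R)$ has exactly $2|Q_R|$ elements. To pin down $|Q_R|$, I would use the stated isomorphism $Q_R\cong Q_{R/M}\times(1+M)$ together with $|1+M|=|M|$ and Lemma~\ref{szz}. Note that $|R/M|\equiv 3\pmod 4$ is odd, so $R/M$ is a field of odd order and Lemma~\ref{szz} applies, giving $|Q_{R/M}|=(|R|/|M|-1)/2$. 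Therefore
$$|Q_R|=\frac{|R|/|M|-1}{2}\,|M|=\frac{|R|-|M|}{2}=\frac{|R^\times|}{2},$$
so that $|T_R|=|R^\times|$. Since the inclusion $T_R\subseteq R^\times$ always holds, equality of cardinalities upgrades to $T_R=R^\times$, and hence $\Gr=G_R$ as graphs.

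With this reduction in hand, the eigenvalues of $A(\Gr)=A(G_R)$ follow from Theorem~\ref{evuc} applied with $s=1$, $R_1=R$, $m_1=|M|$. The subset $C=\emptyset$ produces $\ld_\emptyset=|R^\times|$ with multiplicity $1$; the subset $C=\{1\}$ produces $\ld_{\{1\}}=-|R^\times|/(|R_1^\times|/m_1)$ with multiplicity $|R_1^\times|/m_1$. Using $|R_1^\times|/m_1=|R^\times|/|M|=|R|/|M|-1$, these simplify to the eigenvalue $-|R^\times|/(|R|/|M|-1)$ with multiplicity $|R|/|M|-1$. Finally, part (ii) of Theorem~\ref{evuc} yields the eigenvalue $0$ with multiplicity $|R|-\bigl(1+|R|/|M|-1\bigr)=|R|-|R|/|M|$, and a quick check confirms the three multiplicities sum to $|R|$.

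The cardinality bookkeeping is routine; the one genuinely load-bearing step is the identification $T_R=R^\times$, since everything afterward is an immediate specialization of the known unitary-Cayley spectrum. I expect the main obstacle to be verifying cleanly that $Q_R\cap(-Q_R)=\emptyset$ and computing $|Q_R|$ exactly—once $|T_R|=|R^\times|$ is secured, the containment $T_R\subseteq R^\times$ forces equality and the rest is immediate.
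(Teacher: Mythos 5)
Your proposal is correct, but note that the paper does not prove this statement at all: it is quoted verbatim from Liu and Zhou~\cite{qucr}, so there is no in-paper proof to match. Your derivation is a legitimate self-contained alternative, and its load-bearing step is sound: since $|R|/|M|\equiv 3\pmod 4$ is odd, Lemma~\ref{field1} applies and gives $-1\notin Q_R$, your disjointness argument $Q_R\cap(-Q_R)=\emptyset$ is valid, and the count $|Q_R|=|Q_{R/M}|\,|1+M|=\frac{|R^\times|}{2}$ via Lemma~\ref{szz} then forces $T_R=R^\times$ and hence $\Gr=G_R$, after which Theorem~\ref{evuc} with $s=1$ yields exactly the stated spectrum (your arithmetic $\ld_{\{1\}}=-|M|=-|R^\times|/(|R|/|M|-1)$ and the multiplicity bookkeeping all check out). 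One small caveat worth flagging: the isomorphism $Q_R\cong Q_{R/M}\times(1+M)$, as quoted in the paper without hypotheses, actually requires the residue field to have odd characteristic (it fails for $\Zl_4$, say); in your setting $|R|/|M|\equiv 3\pmod 4$ guarantees this, and the same parity hypothesis is what legitimizes invoking Lemma~\ref{field1}, so you should state explicitly that oddness of $|R|/|M|$ is being used at both points. For comparison, the source's route is structural rather than counting-based: it reduces to the residue field via $\Gr\cong\mathcal{G}_{R/M}\otimes\mathring{K}_{|M|}$ (the paper's Lemma~\ref{zad}), observes that $q\equiv 3\pmod 4$ makes $\mathcal{G}_{\mathbb{F}_q}$ the complete graph $K_q$ (every nonzero element is a square or the negative of one), and reads off the complete-multipartite spectrum; your approach buys the same conclusion from within this paper's unitary-Cayley machinery and makes the underlying reason ($T_R=R^\times$) explicit at the level of $R$ itself rather than of $R/M$.
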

		\begin{assum}\label{ass2}
			{\em     Let $R$ be a finite commutative ring with unity and $s$ be a non-negative integer. Let the decomposition of $R$ be $R_0\times\cdots\times R_s$ such that $|R_0|/m_0\equiv3\pmod 4$, and $|R_i|/m_i\equiv1\pmod4$ for $i\in\{1,\hdots s\}$, where $R_i$ is a local ring with maximal ideal $M_i$ and $m_i=|M_i|$ for $i\in\{0,\hdots,s\}$. Also, let $\mathcal{R}=R_1\times\cdots\times R_s$  for $s\geq 1$. }
		\end{assum}
		For disjoint subsets $B$ and $C$ of $\{1,\hdots,s\}$, define 
		\begin{equation*}\label{expevquc}
			\ld_{B,C}=(-1)^{|C|}\dfrac{|\mathcal{R}^\times|}{2^s\prod_{i\in B}\left(\sqrt{|R_i|/m_i}+1\right)\prod_{j\in C}\left(\sqrt{|R_j|/m_j}-1\right)}.
		\end{equation*}
		
		The following theorem gives the eigenvalues of the adjacency matrix of $\Gr$ under the conditions on $R$ specified in Assumption \ref{ass2}.
		\begin{theorem}[\cite{qucr}]\label{pstoth}
			Let $R$ be a ring as in Assumption \ref{ass2} and $s\geq 1$. Then the eigenvalues of the adjacency matrix of $\Gr$ are
			\begin{enumerate}[label=(\roman*)]
				\item $|R_0^\times|\lambda_{B,C}$, with multiplicity $\dfrac{1}{2^{|B|+|C|}}\prod\limits_{k\in B\cup C}(|R_k|/m_k-1)$ for all pairs of disjoint subsets $B$ and $C$ of $\{1,\ldots,s\}$, 
				\item $-\dfrac{|R_0^\times|}{|R_0|/m_0-1}\lambda_{B,C}$, with multiplicity $\dfrac{|R_0|/m_0-1}{2^{|B|+|C|}}\prod\limits_{k\in B\cup C}(|R_k|/m_k-1)$ for all pairs of disjoint subsets $B$ and $C$ of $\{1,\ldots,s\}$, and
				\item $0$, with multiplicity $|R|-\sum\limits_{\substack{B,C\subseteq\{1,\ldots,s\}\\ B\cap C=\emptyset}}\left(\dfrac{|R_0|/m_0}{2^{|B|+|C|}}\prod\limits_{k\in B\cup C}(|R_k|/m_k-1)\right)$.
			\end{enumerate}
		\end{theorem}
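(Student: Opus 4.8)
The plan is to exhibit $\Gr$ as a tensor product $\mathcal{G}_{R_0} \otimes \mathcal{G}_{\mathcal{R}}$ and then obtain its spectrum from the spectra of the two factors via the Kronecker-product eigenvalue rule of Lemma~\ref{kp}. The factor $\mathcal{G}_{R_0}$ is the quadratic unitary Cayley graph over the local ring $R_0$ with $|R_0|/m_0 \equiv 3 \pmod 4$, whose spectrum is given by the theorem preceding Assumption~\ref{ass2}, while $\mathcal{G}_{\mathcal{R}}$ is the quadratic unitary Cayley graph over $\mathcal{R} = R_1 \times \cdots \times R_s$, whose spectrum is given by Theorem~\ref{evqucr}.

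First I would establish that $T_R = T_{R_0} \times T_{\mathcal{R}}$. Since $|R_i|/m_i \equiv 1 \pmod 4$ for $i \in \{1,\ldots,s\}$, Lemma~\ref{field1} gives $-1 \in Q_{R_i}$, so $-Q_{R_i} = Q_{R_i}$ and $T_{R_i} = Q_{R_i}$; taking the product over $i \geq 1$ yields $-1 \in Q_{\mathcal{R}}$ and hence $-Q_{\mathcal{R}} = Q_{\mathcal{R}} = T_{\mathcal{R}}$. In contrast $|R_0|/m_0 \equiv 3 \pmod 4$ forces $-1 \notin Q_{R_0}$, so $Q_{R_0}$ and $-Q_{R_0}$ are disjoint. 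Using $Q_R = Q_{R_0} \times Q_{\mathcal{R}}$ and $-Q_R = (-Q_{R_0}) \times (-Q_{\mathcal{R}}) = (-Q_{R_0}) \times Q_{\mathcal{R}}$, the two pieces share the same second coordinate set, so their union folds into $T_R = (Q_{R_0} \cup (-Q_{R_0})) \times Q_{\mathcal{R}} = T_{R_0} \times T_{\mathcal{R}}$. Because adjacency of $(a_0,a')$ and $(c_0,c')$ in $\cay(R, T_R)$ is precisely $a_0 - c_0 \in T_{R_0}$ together with $a' - c' \in T_{\mathcal{R}}$, this is exactly adjacency in the tensor product, so $\Gr = \mathcal{G}_{R_0} \otimes \mathcal{G}_{\mathcal{R}}$ and therefore $A(\Gr) = A(\mathcal{G}_{R_0}) \otimes A(\mathcal{G}_{\mathcal{R}})$.

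Next I would combine the two spectra. The factor $\mathcal{G}_{R_0}$ has eigenvalues $|R_0^\times|$ (multiplicity $1$), $-|R_0^\times|/(|R_0|/m_0 - 1)$ (multiplicity $|R_0|/m_0 - 1$) and $0$; the factor $\mathcal{G}_{\mathcal{R}}$ has eigenvalues $\lambda_{B,C}$ with multiplicity $2^{-(|B|+|C|)}\prod_{k \in B \cup C}(|R_k|/m_k - 1)$ over disjoint $B,C \subseteq \{1,\ldots,s\}$, together with $0$. By Lemma~\ref{kp}, the eigenvalues of $A(\Gr)$ are all pairwise products with multiplicities multiplied. Pairing the two nonzero eigenvalues of $\mathcal{G}_{R_0}$ with each $\lambda_{B,C}$ produces exactly families (i) and (ii): $|R_0^\times|\lambda_{B,C}$ acquires multiplicity $1 \cdot 2^{-(|B|+|C|)}\prod_k(\cdots)$, and $-\frac{|R_0^\times|}{|R_0|/m_0-1}\lambda_{B,C}$ acquires multiplicity $(|R_0|/m_0 - 1)\cdot 2^{-(|B|+|C|)}\prod_k(\cdots)$. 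Every remaining product has a zero factor and contributes only to the eigenvalue $0$. To pin down its multiplicity I would compute the total nonzero multiplicity as $(|R_0|/m_0)\sum_{B,C} 2^{-(|B|+|C|)}\prod_{k \in B \cup C}(|R_k|/m_k - 1)$, using that $\mathcal{G}_{R_0}$ contributes total nonzero multiplicity $1 + (|R_0|/m_0 - 1) = |R_0|/m_0$; subtracting this from $|R| = |R_0|\,|\mathcal{R}|$ gives precisely the multiplicity of $0$ stated in (iii).

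The main obstacle is the tensor decomposition in the first step. The point requiring care is that $T_R = Q_R \cup (-Q_R)$ does not factor over the direct product in general, since $-Q_R$ is \emph{a priori} a different set from $Q_R$; the factorization succeeds only because $-1$ is a square in every factor except $R_0$, which makes the $\mathcal{R}$-coordinate of $-Q_R$ coincide with that of $Q_R$ and lets the union collapse to a product. As a consistency check I would verify that $-1 \notin Q_{R_0}$, so that $|T_{R_0}| = 2|Q_{R_0}| = |R_0^\times|$, matching the degree recorded by family (i) at $B = C = \emptyset$. One secondary remark: distinct pairs $(B,C)$ might in principle yield equal values $\lambda_{B,C}$, but since the conclusion lists eigenvalue--multiplicity pairs indexed by $(B,C)$, no merging of multiplicities is required.
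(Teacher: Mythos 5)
Your proposal is correct and follows essentially the same route as the source: the paper itself gives no proof of Theorem~\ref{pstoth} (it is imported from \cite{qucr}), and the argument there is precisely your factorization $T_R = T_{R_0}\times T_{\mathcal{R}}$ --- valid because $-1\in Q_{R_i}$ for $i\in\{1,\hdots,s\}$ while $-1\notin Q_{R_0}$, so the union $Q_R\cup(-Q_R)$ collapses to a product --- giving $\Gr \cong \mathcal{G}_{R_0}\otimes \mathcal{G}_{\mathcal{R}}$, after which Lemma~\ref{kp} combines the spectrum of $\mathcal{G}_{R_0}$ with that of $\mathcal{G}_{\mathcal{R}}$ from Theorem~\ref{evqucr}. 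Your handling of the two delicate points (the union-collapse step, and obtaining the multiplicity of $0$ by subtracting the total nonzero multiplicity $(|R_0|/m_0)\sum_{B,C}2^{-(|B|+|C|)}\prod_{k\in B\cup C}(|R_k|/m_k-1)$ from $|R|$) is sound, and your remark that no merging of coincident eigenvalues is needed is the right disclaimer.
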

		Let $R$ be a ring as in Assumption \ref{ass2}. Note that if $s=0$ then the eigenvalues of the adjacency matrix of $\Gr$ are $|R_0^\times|$, $-\dfrac{|R_0^\times|}{|R_0|/m_0-1}$ and $0$, with multiplicities $1$, $|R_0|/m_0-1$ and $|R_0|-|R_0|/m_0$, respectively. Now, we calculate the valency $k$ of $\Gr$. By Lemma \ref{field1}, $-1\notin Q_{R_0}$. Thus $Q_R\cap (-Q_R)=\emptyset$. Therefore $$k=2|Q_R|=2\prod_{j=0}^{s}|Q_{R_j/M_j}||M_j|=2\prod_{j=0}^{s}\dfrac{|R_j^\times|}{2|M_j|}|M_j|=|R_0^\times|\dfrac{|\mathcal{R}^\times|}{2^s}.$$
		For disjoint subsets $B$ and $C$ of $\{1,\hdots,s\}$, define 
		$$\mu_{B,C}=(-1)^{|C|}\dfrac{1}{\prod_{i\in B}\left(\sqrt{|R_i|/m_i}+1\right)\prod_{j\in C}\left(\sqrt{|R_j|/m_j}-1\right)}.
		$$
		Then the nonzero eigenvalues of $P(\Gr)$ are $\mu_{B,C}$ and $-\dfrac{\mu_{B,C}}{|R_0|/m_0-1}$. 
		
		\begin{theorem}\label{peroth}
			Let $R$ be a ring as in Assumption \ref{ass2}. Then the quadratic unitary Cayley graph $\Gr$ is periodic if and only if $s=0$ and $R_0/M_0\cong \Zl_3$. 
		\end{theorem}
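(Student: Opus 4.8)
The plan is to read the full discriminant spectrum of $\Gr$ off Theorem~\ref{pstoth} and feed it into the periodicity criterion of Theorem~\ref{ls}. Writing $q_i=|R_i|/m_i$, the nonzero eigenvalues of $P(\Gr)$ are the numbers $\mu_{B,C}$ together with their partners $-\mu_{B,C}/(q_0-1)$, as $B,C$ range over disjoint subsets of $\{1,\dots,s\}$; the eigenvalue $0$ is irrelevant to periodicity. Since each $R_i/M_i$ is a field, $q_i$ is a prime power, with $q_0\ge 3$ (as $q_0\equiv 3\pmod 4$) and $q_i\ge 5$ (as $q_i\equiv 1\pmod 4$) for $i\ge 1$. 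I would dispose of the converse first: if $s=0$ and $R_0/M_0\cong\Zl_3$, i.e.\ $q_0=3$, then by the $s=0$ case recorded after Theorem~\ref{pstoth} the eigenvalues of $P(\Gr)$ are exactly $1,-\tfrac12,0$, so $\spec_P^\Ql(\Gr)\subseteq\{\pm1,\pm\frac12,0\}$ while $\spec_P^\Delta(\Gr)$ and $\spec_P^{\overline\Delta}(\Gr)$ are empty, and Theorem~\ref{ls} yields periodicity.

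For the forward direction, assume $\Gr$ is periodic and first rule out $s\ge 1$. Taking $B=\{1\}$ and $C=\emptyset$ gives the eigenvalue $\mu_{\{1\},\emptyset}=\frac{1}{\sqrt{q_1}+1}=\frac{\sqrt{q_1}-1}{q_1-1}$. If $q_1$ is a perfect square this is the rational number $\frac{1}{\sqrt{q_1}+1}$, which for $q_1\ge 9$ lies in $(0,\tfrac14]$ and hence not in $\{\pm1,\pm\frac12,0\}$, contradicting Theorem~\ref{ls}; so $q_1$ is a non-square, the eigenvalue lies in $\Delta$, and it must belong to the list $\{\pm\frac{\sqrt3}{2},\pm\frac14\pm\frac{\sqrt5}{4},\pm\frac{1}{\sqrt2}\}$. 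Since its rational part $\frac{-1}{q_1-1}$ is nonzero, only the entries $\pm\frac14\pm\frac{\sqrt5}{4}$ are possible, and matching rational and irrational parts forces $q_1=5$ (equivalently $R_1/M_1\cong\Zl_5$), exactly as in the proof of Theorem~\ref{qucrp}.

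The decisive step is then to exploit the partner eigenvalue supplied by the factor $R_0$: with $q_1=5$ the value $-\mu_{\{1\},\emptyset}/(q_0-1)=\frac{1-\sqrt5}{4(q_0-1)}$ is also an eigenvalue of $P(\Gr)$ (its multiplicity being positive) and again lies in $\Delta$, with $\sqrt5$-coefficient $-\frac{1}{4(q_0-1)}$. Membership in the allowed $\Delta$-list then forces $\frac{1}{4(q_0-1)}=\frac14$, i.e.\ $q_0=2$, which is impossible since $q_0\equiv 3\pmod 4$; hence $s=0$. With $s=0$ the eigenvalues of $P(\Gr)$ reduce to $1,-\frac{1}{q_0-1},0$, and the rational value $-\frac{1}{q_0-1}$ must lie in $\{\pm1,\pm\frac12,0\}$, so $q_0\ge 3$ leaves only $-\frac12$, giving $q_0=3$ and $R_0/M_0\cong\Zl_3$. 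I expect the main obstacle to be routine bookkeeping: cleanly separating the square and non-square cases for $q_1$ and confirming that no value other than $q_1=5$ (respectively $q_0=3$) can land in the finite admissible sets of Theorem~\ref{ls}. The genuinely new ingredient beyond Theorem~\ref{qucrp} is the partner eigenvalue $-\mu_{B,C}/(q_0-1)$, which is precisely what renders even a single additional factor $R_i$ with $s\ge 1$ incompatible with periodicity.
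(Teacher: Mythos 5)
Your proposal is correct and follows essentially the same route as the paper: extract the discriminant eigenvalues $\mu_{B,C}$ and their partners $-\mu_{B,C}/(|R_0|/m_0-1)$ from Theorem~\ref{pstoth}, force $q_1=5$ via $\mu_{\{1\},\emptyset}$, and use the partner eigenvalue to contradict Theorem~\ref{ls} when $s\geq 1$. The only differences are cosmetic --- you rule out $s\geq1$ before pinning $q_0=3$ (the paper does the reverse, so its contradiction uses the explicit value $-\frac{\sqrt5-1}{8}$ while yours works uniformly in $q_0$), and you spell out the square/non-square bookkeeping for $q_1$ that the paper leaves implicit.
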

		\begin{proof}
			Let $\Gr$ be a periodic graph. Note that $\mu_{\emptyset,\emptyset}=1$. Thus  $-\dfrac{1}{|R_0|/m_0-1}$ is an eigenvalue of $P(\Gr)$. Therefore by Theorem \ref{ls}, if $\Gr$ is periodic then $|R_0|/m_0=3$, and so $R_0/M_0\cong \Zl_3$. Now, suppose that $s\geq 1$. Then for $B=\{1\}$ and $C=\emptyset$, we have $\mu_{B,C}=\frac{\sqrt{|R_1|/m_1}-1}{|R_1|/m_1-1}.$ Therefore by Theorem \ref{ls}, $|R_1|/m_1=5$, that is, $\mu_{B,C}=\frac{\sqrt{5}-1}{4}$. In that case,  $$-\frac{\mu_{B,C}}{|R_0|/m_0-1}=-\dfrac{\sqrt{5}-1}{8}\notin  \left\{\pm \frac{\sqrt{3}}{2}, \pm\frac{1}{4}\pm\frac{\sqrt{5}}{4},\pm\frac{1}{\sqrt{2}}\right\},$$
			a contradiction. Hence $s=0$.

			Conversely, if  $s=0$ and $R_0/M_0\cong \Zl_3$, then $\spec_P(\Gr)\subset\{1,-\frac{1}{2},0\}$. Therefore, Theorem \ref{ls} guarantees  the periodicity of $\Gr$.
		\end{proof}
	Proceeding as in the discussion that followed after the proof of Theorem~\ref{qucrp}, one can see that the graph $\Gr$ in Theorem~\ref{peroth} is isomorphic to $C_3[\overline{K}_m]$, where $m=|M_0|$.
			\begin{theorem}\label{lastth}
	Let $R$ be a ring as in Assumption \ref{ass2}. Then the quadratic unitary Cayley graph $\Gr$ does not exhibit perfect state transfer.
\end{theorem}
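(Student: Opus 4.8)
The plan is to run exactly the argument that proved Theorem~\ref{qucrpst}, now adapted to the mixed-residue setting of Assumption~\ref{ass2}. Every quadratic unitary Cayley graph is a Cayley graph, hence vertex-transitive, so Theorem~\ref{thmper} shows that perfect state transfer forces periodicity. First I would feed this into the periodicity classification of Theorem~\ref{peroth}: if $\Gr$ exhibits perfect state transfer, then it is periodic, and therefore $s=0$ with $R_0/M_0\cong\Zl_3$. Thus the whole problem collapses to a single local ring $R=R_0$ whose residue field is $\Zl_3$; in particular $|R_0|/m_0=3$ is odd, which is precisely the hypothesis needed to invoke the pseudograph decomposition.

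Next I would exploit the tensor structure. Since $|R_0|/|M_0|=3$ is odd, Lemma~\ref{zad} gives $\Gr\cong\mathcal{G}_{\Zl_3}\otimes\mathring{K}_{m_0}$. The discriminant of $\mathring{K}_{m_0}$ is $\tfrac{1}{m_0}J_{m_0}$, so exactly as in the opening computation of the proof of Lemma~\ref{perm} one obtains
$$P(\Gr)=P(\mathcal{G}_{\Zl_3})\otimes\tfrac{1}{m_0}J_{m_0}.$$
The spectral decomposition $\tfrac{1}{m_0}J_{m_0}=1\cdot E_1+0\cdot E_2$ with $E_2=I_{m_0}-\tfrac{1}{m_0}J_{m_0}$, together with Lemma~\ref{kp}\ref{kp2}, shows that the projector of $P(\Gr)$ onto its $0$-eigenspace is $I\otimes E_2$. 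Writing $\eu=\ex\otimes\ey$ and $\ev=\ez\otimes\ew$ for two vertices admitting perfect state transfer, Lemma~\ref{pstc} forces $(I\otimes E_2)\eu=\pm(I\otimes E_2)\ev$, and the coordinatewise contradiction derived in Lemma~\ref{perm} for $m_0\geq3$ applies verbatim. Hence $m_0\leq 2$.

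It then remains to resolve the two surviving cases and read off the graphs. If $m_0=1$ then $R=\Zl_3$ and $\mathcal{G}_{\Zl_3}$ is the triangle $C_3$; if $m_0=2$ then $|R|=6$, and among the rings of order $6$ only $\Zl_6$ carries a unity (as already tabulated in the proof of Theorem~\ref{ucrpst}), so $\mathcal{G}_{\Zl_6}$ is the $6$-cycle $C_6$. Finally I would invoke the fact~\cite{bhakta1} that the $n$-cycle exhibits perfect state transfer if and only if $n$ is even: $C_3$ does not and $C_6$ does, isolating $\mathcal{G}_{\Zl_6}$ as the unique graph in this family with perfect state transfer.

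The hard part will be the $m_0=2$ step. A local ring with residue field $\Zl_3$ has order a power of $3$, so the reduced data $(\Zl_3,\,m_0=2)$ does not come from an honest local ring of order $6$; the reconciliation is that one should read off $|R|=6$ and then classify unital rings of that order, where the non-local ring $\Zl_6$ appears and its graph is directly identified with $C_6$. Establishing the factorization $P(\Gr)=P(\mathcal{G}_{\Zl_3})\otimes\tfrac{1}{m_0}J_{m_0}$ (via Lemma~\ref{discp} and regularity of $\mathring{K}_{m_0}$) and the isomorphisms $\mathcal{G}_{\Zl_3}\cong C_3$, $\mathcal{G}_{\Zl_6}\cong C_6$ are the remaining routine verifications.
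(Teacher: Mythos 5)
Your proposal is correct and follows essentially the same route as the paper's own proof: Theorem~\ref{thmper} together with Theorem~\ref{peroth} to force $s=0$ and $R_0/M_0\cong\Zl_3$, Lemma~\ref{zad} plus the projector argument from the proof of Lemma~\ref{perm} to obtain $m_0\leq 2$, and the even-cycle criterion to single out $\mathcal{G}_{\Zl_6}$. Your closing remark on the $m_0=2$ case even makes explicit a point the paper glosses over, namely that no local ring of order $6$ with residue field $\Zl_3$ exists, so one must pass to the classification of unital rings of order $6$ to land on $\Zl_6$.
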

\begin{proof}
	Suppose that $\Gr$ exhibit perfect state transfer. Then by Theorem \ref{thmper} and Theorem \ref{peroth}, we have   $s=0$ and $R_0/M_0\cong \Zl_3$. Thus by Lemma \ref{zad}, $P(\mathcal{G}_{R_0})=P(\mathcal{G}_{R_0/M_0})\otimes \frac{1}{m_0}J_{m_0}.$ Now as in the proof of Lemma \ref{perm}, we conclude that $m_0\leq 2$. Hence either $R_0=\Zl_3$ or $R_0=\Zl_6$. However $\Zl_6$ is not a local ring, and therefore $R_0=\Zl_3$. The graph $\mathcal{G}_{\mathbb{Z}_3}$ is the cycle on three vertices. Recall that perfect state transfer occurs in a cycle if and only if the cycle has an even number of vertices. Therefore, the result follows.
\end{proof}	
Note that the ring $\Zl_6$ does not satisfy the hypotheses of Theorems~\ref{qucrpst} and \ref{lastth}. However, being a cycle on six vertices, the graph $\mathcal{G}_{\Zl_6}$ exhibits perfect state transfer. Thus $\mathcal{G}_{\Zl_6}$ is a graph which is not characterized by Theorems~\ref{qucrpst} and \ref{lastth}. It is an interesting problem to determine the adjacency spectrum and consequently characterize periodicity and perfect state transfer of graphs that do not satisfy the hypotheses of Theorems~\ref{qucrpst} and \ref{lastth}.

		\section*{Acknowledgments}
		The first author acknowledges the support provided by the Prime Minister’s Research Fellowship (PMRF) scheme of the Government of India (PMRF-ID: 1903298). The second author thanks the Science and Engineering Research Board (SERB), Government of India, for supporting this work under the MATRICS Project [File No.  MTR/2021/000075]. The authors sincerely thank the anonymous reviewer for their careful reading and constructive comments, which have greatly improved the manuscript.
				
	\end{document}